\newtheorem{thm}{Theorem}[section]
\newtheorem{cor}[thm]{Corollary}
\newtheorem{lem}[thm]{Lemma}
\newtheorem{prop}[thm]{Proposition}
\theoremstyle{definition}
\newtheorem{ex}[thm]{Example}
\newtheorem{nota}[thm]{Notation}
\newtheorem{defn}[thm]{Definition}
\theoremstyle{remark}
\newtheorem{rem}[thm]{Remark}
\begin{document}

\title[Short version of title]{smoothing of limit linear series on curves and metrized complexes of pseudocompact type}
\author{Xiang He}
\maketitle

\begin{abstract}
We investigate the connection between Osserman limit series \cite{osserman2014limit} (on curves of pseudocompact type) and Amini-Baker limit linear series \cite{amini2015linear} (on metrized complexes with corresponding underlying curve) via a notion of pre-limit linear series on curves of the same type. Then, applying the smoothing theorems of Osserman limit linear series, we deduce that, fixing certain metrized complexes, or for certain types of Amini-Baker limit linear series, the smoothability is equivalent to a certain ``weak glueing condition". Also for arbitrary metrized complexes of pseudocompact type the weak glueing condition (when it applies) is necessary for smoothability. As an application we confirm the lifting property of specific divisors on the metric graph associated to a certain regular smoothing family, and give a new proof of \cite[Theorem 1.1]{cartwright2014lifting} for vertex avoiding divisors, and generalize loc.cit. for divisors of rank one in the sense that, for the metric graph, there could be at most three edges (instead of two) between any pair of adjacent vertices. 

\end{abstract}


\section{Introduction}
The theory of limit linear series has been developed by Eisenbud and Harris in \cite{eisenbud1986limit} for handling the degeneration of linear series on smooth curves as the curves degenerate to reducible curves (of compact type). It has been applied to prove results involving moduli space of curves, such as the Brill-Noether theorem (\cite{griffiths1980variety}), the Gieseker-Petri theorem (\cite{gieseker1982stable}), and that moduli spaces of curves of sufficiently high genus are of general type (\cite{harris1982kodaira} and \cite{eisenbud1987kodaira}), etc.

A complete generalization of Eisenbud-Harris theory has remained open. Earlier approaches can be found in papers of Eduardo Esteves such as \cite{esteves1998linear} and \cite{esteves2002limit}. Recently, Amini and Baker \cite{amini2015linear} introduced a notion of metrized complexes, which is roughly speaking a finite metric graph $\Gamma$ together with a collection of marked curves $C_v$, one for each vertex $v$, such that the marked points of $C_v$ is in bijection with the edges of $\Gamma$ incident on $v$. This can be considered as an enrichment of both metric graphs and nodal curves. The concept of limit linear series on a metrized complex is proposed in loc.cit. as well as the specialization map. While the Amini-Baker limit linear series satisfies the specialization theorem, it is unclear how to prove a general theorem of their smoothing behaviors. Related results can be found in \cite{luo2014smoothing}, where a sufficient and necessary condition for the smoothability (with respect to certain base field) of a (saturated) limit linear series of rank one is given. 

On the other hand, Brian Osserman developed in \cite{osserman2014limit} a notion of limit linear series on curves possibly not of compact type, which is a generalization of Eisenbud and Harris limit linear series. He also proved the specialization theorem, as well as a smoothing theorem for curves of pseudocompact type (see below for definition), which states that a limit linear series is smoothable if the moduli space is of expected dimension at the corresponding point. This improves the smoothing theorem in the compact-type case in the sense that it applies for possibly non-refined limit linear series.

In the present paper we investigate the smoothing of Amini-Baker limit linear series by studying their connection with Osserman limit linear series.


Let $X_0$ be a curve of {pseudocompact type}, which is a curve whose dual graph $G$ is obtained from a tree, denoted by $\overline G$, by adding edges between adjacent vertices. A {chain structure} $\bm n$ on $G$ is roughly a integer-valued length function on $E(G)$. This induces a metric graph $\Gamma$ as well as a metrized complex $\mathfrak C_{X_0,\bm n}$ with underlying graph $\Gamma$ (see \S 2 for details). Given a chain structure $\bm n$, let $\widetilde X_0$ be the curve obtained from $X_0$ by inserting a chain of $\bm n(e)-1$ projective lines at the node of $e$ for all $e\in E(G)$. Let $\widetilde G$ be the dual graph of $\widetilde X_0$. An Osserman limit linear series (which we also refer to as a limit linear series on $(X_0,\bm n)$) then consists of a certain line bundle $\mathscr L$ on $\widetilde X_0$ and a collection of linear systems on each component of $X_0$ that satisfies certain (multi)vanishing conditions, and an Amini-Baker limit linear series (which we also refer to as a limit linear series on $\mathfrak C_{X_0,\bm n}$) consists of a divisor on $\mathfrak C_{X_0,\bm n}$ with a collection of linear spaces of rational functions on each component of $X_0$ satisfying certain rank conditions. The multidegree $w_0$ of a limit linear series on $(X_0,\bm n)$ is the multidegree of $\mathscr L$ on $V(\widetilde G)$, which induces a divisor on $\Gamma$ as we identify $\Gamma$ with the metric graph obtained from $\widetilde G$ by assigning length $1$ to every edge. We also call a limit linear series on $\mathfrak C_{X_0,\bm n}$ of multidegree $w_0$ if its underlying divisor on $\Gamma$ is (up to linear equivalence) induced by $w_0$.

In the following we fix $w_0$ the multidegree of a limit linear series on $(X_0,\bm n)$ and $\mathfrak C_{X_0,\bm n}$. Note that, by the definition of Osserman limit linear series, $w_0$ is assumed to be ``admissible", namely, when restricting to each component of $\Gamma\backslash V(G)$, the corresponding divisor $D$ of $w_0$ is effective and of at most degree one, see also Definition \ref{admissible multidegree}.

In \cite{osserman2017limit} the author constructed a map $\mathfrak F$ from the set of limit linear series on $(X_0,\bm n)$ to the set of limit linear series on $\mathfrak C_{X_0,\bm n}$. It is also proved in loc.cit. that $\mathfrak F$ can be defined over the set of pre-limit linear series (a weakened version of Osserman limit linear series), in this case we show in Section 3 that $\mathfrak F$ is a bijection. This essentially says that a limit linear series on $\mathfrak C_{X_0,\bm n}$  carries the same data as a pre-limit linear series on $(X_0,\bm n)$. We also give a necessary condition for a pre-limit linear series (hence for a limit linear series on $\mathfrak C_{X_0,\bm n}$) to be lifted to a limit linear series on $(X_0,\bm n)$, which is called the weak glueing condition. 

In Section $4$ we consider the smoothing of limit linear series on $\mathfrak C_{X_0,\bm n}$. We show in Theorem \ref{smoothness implies weak glueing} that a necessary condition is the weak glueing condition. On the other hand, it is proved in \cite{osserman2014dimension} that for a special case of $(X_0,\bm n)$ the moduli space of limit linear series (again of multidegree $w_0$) is of expected dimension, hence every limit linear series on $(X_0,\bm n)$ is smoothable. We show in this case that the weak glueing condition is sufficient for a pre-limit linear series to be lifted to a limit linear series, and use the ``equivalence" between pre-limit linear series on $(X_0,\bm n)$ and limit linear series on $\mathfrak C_{X_0,\bm n}$ to give a smoothing theorem for the latter (see Theorem \ref{general edge length smoothing} for details):


\begin{thm}\label{introduction smoothing theorem}
Suppose we have $(X_0,\bm n)$ such that the induced metric graph $\Gamma$ has few edges and general edge lengths, and that the components of $X_0$ are strongly Brill-Noether general, then a limit linear series on $\mathfrak C_{X_0,\bm n}$ is smoothable if and only if it satisfies the weak glueing condition.
\end{thm}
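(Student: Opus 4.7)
The plan is to prove the two directions separately, leveraging the bijection between pre-limit linear series on $(X_0,\bm n)$ and limit linear series on $\mathfrak C_{X_0,\bm n}$ established in Section 3, together with Osserman's smoothing theorem for the expected-dimension case.

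For the ``only if'' direction, I would simply invoke Theorem \ref{smoothness implies weak glueing}, which the introduction states holds for arbitrary metrized complexes of pseudocompact type. This direction requires no hypotheses on the edge structure or on the components $C_v$, so it applies immediately.

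For the ``if'' direction, let $\mathfrak L$ be a limit linear series on $\mathfrak C_{X_0,\bm n}$ satisfying the weak glueing condition. First I would apply the bijection $\mathfrak F$ from Section 3 to obtain the corresponding pre-limit linear series $\mathfrak L' = \mathfrak F^{-1}(\mathfrak L)$ on $(X_0,\bm n)$. The key step, and what I expect to be the main technical obstacle, is to promote $\mathfrak L'$ to an honest Osserman limit linear series; the weak glueing condition is only known a priori to be necessary, and under the hypotheses of few edges, general edge lengths, and strongly Brill--Noether general components one must exhibit the finer glueing data (the full vanishing/multivanishing compatibility). My plan here is to use the dimension result from \cite{osserman2014dimension}: since the moduli space of Osserman limit linear series of multidegree $w_0$ has the expected dimension at every point in this special setting, the map from pre-limit data to limit data cannot drop rank, so the weak glueing condition characterizes the image. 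In other words, a dimension count forces every pre-limit linear series satisfying the weak glueing condition to arise from an Osserman limit linear series.

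Once $\mathfrak L'$ is realized as an Osserman limit linear series, I would apply Osserman's smoothing theorem (again via \cite{osserman2014dimension}, which guarantees expected dimension and hence smoothability in this regime) to obtain a one-parameter smoothing of $\mathfrak L'$. Finally, I would use the compatibility of $\mathfrak F$ with specialization to conclude that the induced family of linear series on the generic fiber specializes to $\mathfrak L = \mathfrak F(\mathfrak L')$ on the metrized complex, yielding smoothability of the Amini--Baker limit linear series. The crux of the argument is thus the lifting step: translating the weak glueing condition into the stronger condition defining Osserman limit linear series, for which the expected-dimension hypothesis is essential.
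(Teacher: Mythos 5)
Your overall skeleton matches the paper: smoothable $\Rightarrow$ weak glueing is Theorem \ref{smoothness implies weak glueing}; and for the converse one pulls the Amini--Baker series back through $\mathfrak F^{-1}$ to a pre-limit linear series, lifts it to an Osserman limit linear series, smooths that via the expected-dimension result (Theorem \ref{theorem of dimension}) together with Theorem \ref{a smoothing theorem}, and transfers the smoothing back through $\mathfrak F$ (Lemma \ref{compatibleness between smoothing}). You correctly identify the lifting step as the crux.

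However, your treatment of that crux has a genuine gap. You argue that because the moduli space of Osserman limit linear series of multidegree $w_0$ has the expected dimension, ``the map from pre-limit data to limit data cannot drop rank, so the weak glueing condition characterizes the image.'' This inference is not valid: the expected-dimension statement concerns the space of limit linear series and says nothing, by itself, about which pre-limit linear series lie in the image of the forgetful map. A priori there could be a pre-limit linear series satisfying the weak glueing condition that admits no compatible glueing in the sense of condition (II) of Definition \ref{limit linear series on curves}, while the space of limit linear series still has dimension exactly $\rho$; purity of dimension does not rule this out, and no ``rank'' of the forgetful map is being controlled by it. The paper's argument (Lemma \ref{weak glueing smoothing}) is of a different nature: it fixes a line bundle $\mathscr L$ on $\widetilde X_0$ with $\mathscr L^v=\mathscr L_v$ and shows directly that the glueing parameters of $\mathscr L$ along the chains of $\mathbb P^1$'s over an edge $e$ --- a torus of dimension $c=\#\{e'\ \mathrm{over}\ e\}-1$ --- can be adjusted to achieve the intersection condition of Remark \ref{def of lls}. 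This is where ``few edges and general edge lengths'' enter concretely: the hypotheses of Theorem \ref{theorem of dimension} force the jumps $\deg D_{j+1}^{e,v}-\deg D_j^{e,v}$ to fall into the restricted patterns (a), (b) listed in that proof, and a case-by-case codimension count (codimension at most $2$ when the jump is $3$ and $g_j\le 2$, at most $1$ when the jump is $2$ and $g_j=1$) shows the conditions imposed never exceed the dimension $c$ of the glueing torus, with the weak glueing condition supplying the compatible subspaces $W_v, W_{v'}$ that make the adjustment possible. Your proposal would need to supply this (or an equivalent) argument; the dimension count you invoke, as stated, does not.
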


Additionally, for arbitrary $\bm n$ and $X_0$ with strongly Brill-Noether general components we consider a family of $w_0$ such that the induced divisor $D$ on $\Gamma$ is ``randomly distributed" on $\Gamma\backslash V(G)$, as in Theorem \ref{smoothing general limit grd}. More precisely, for any edge $e\in E(\overline G)$, let $e_1^\circ,...,e_m^\circ$ be the components of $\Gamma\backslash V(G)$ corresponding to edges $e_1,...,e_m\in E(G)$ that lies over $e$. Given a certain direction on $G$, the divisor $D|_{e_i^\circ}$ gives an integer $x_i$ in $[0,\bm n(e_i)-1]$, and we consider $w_0$ such that $x_1,...,x_m$ are distinct modulo $\mathrm{gcd}(\bm n(e_1),...,\bm n(e_m))$.
We show that a pre-limit linear series automatically lifts to a limit linear series on $(X_0,\bm n)$, and the dimension of the moduli space of limit linear series on $(X_0,\bm n)$ is as expected. In this case any limit linear series on $\mathfrak C_{X_0,\bm n}$ is smoothable.

In section $5$ we consider
the problem of lifting divisors on the metric graph $\Gamma$ to the generic fiber $X_\eta$ of any regular smoothing family (Definition \ref{smoothing family}) $X$ with special fiber $\widetilde X_0$ with rational components and dual graph $\widetilde G$. When $\Gamma$ is a generic chain of loops, it is proved by reducing to the so-called vertex avoiding divisors that every rational divisor on $\Gamma$ is liftable (see \cite{cartwright2014lifting} for details). For $(X_0,\bm n)$ as in Theorem \ref{introduction smoothing theorem}, again since the weak glueing condition is sufficient for a pre-limit linear series to be lifted to a limit linear series on $(X_0,\bm n)$, and the dimension counting shows that every limit linear series on $(X_0,\bm n)$ is smoothable, we are able to prove the following theorem, which gives an alternate approach of lifting (rational) vertex avoiding divisors on a generic chain of loops, by lifting the divisor on $\Gamma$ to a pre-limit linear series on $(X_0,\bm n)$ that satisfies the weak glueing condition.


\begin{thm}
Let $(X_0,\bm n)$ be as in Theorem \ref{introduction smoothing theorem} and $X$ be as above. Suppose further that $X_0$ only has rational components, and that $\overline G$ is a chain. Then every rational divisor on $\Gamma$ of rank less than or equal to $1$ lifts to a divisor on $X_\eta$ of the same rank. In addition, if $\Gamma$ is a generic chain of loops then every rational vertex avoiding divisor  lifts to a divisor on $X_\eta$ with the same rank.
\end{thm}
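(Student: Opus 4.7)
The plan is to translate the lifting problem into the construction of a pre-limit linear series on $(X_0,\bm n)$ satisfying the weak glueing condition, and then feed this into Theorem \ref{introduction smoothing theorem}. Given a rational divisor $D$ on $\Gamma$ of rank $r$ (with $r\leq 1$ in the first case, or $D$ vertex avoiding in the second case), I first pick an admissible multidegree $w_0$ on $\widetilde G$ whose induced divisor class on $\Gamma$ equals $[D]$. Since $\overline G$ is a chain, such a $w_0$ can be produced by concentrating degree on a chosen vertex in each maximal chain of $\widetilde G$ and adjusting by chip-firing so that the chain-interior degrees match the restriction of $D$ to $\Gamma\setminus V(G)$.

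Next, I would construct an Amini--Baker limit linear series $\mathfrak V$ on $\mathfrak C_{X_0,\bm n}$ of multidegree $w_0$ and rank $r$ whose underlying divisor is linearly equivalent to $D$. Because every component $C_v\cong\mathbb P^1$, an $(r+1)$-dimensional subspace $H_v\subset k(C_v)$ with prescribed vanishing orders at the marked nodes exists and is essentially unique whenever the vanishing profile is admissible. The appropriate profile at each vertex $v$ is dictated by a $v$-reduced representative of $D$: for $r\leq 1$ these multi-orders are forced by the Baker--Norine rank inequality at $v$, and for a vertex avoiding divisor on a generic chain of loops they match the explicit description in \cite{cartwright2014lifting}. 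Via the bijection $\mathfrak F$ of Section 3, the datum $\mathfrak V$ corresponds to a pre-limit linear series $\mathfrak V^{\mathrm{pre}}$ on $(X_0,\bm n)$.

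The main obstacle is verifying the weak glueing condition for $\mathfrak V^{\mathrm{pre}}$. For $r=0$ it is vacuous. For $r=1$, since each $C_v$ is rational, the condition reduces to a cycle-compatibility system for the ratios of leading coefficients of sections across the nodes; the chain structure on $\overline G$ lets me propagate the glueing one edge of $\overline G$ at a time, and the freedom to rescale sections on each component provides sufficient slack to close up any multi-edge cycle of $G$, which under the ``few edges and general edge lengths'' hypothesis is short and controlled. In the vertex avoiding setting on a generic chain of loops, the glueing decouples along successive loops, because the vanishing profiles produced by the reduced divisor at adjacent vertices determine each other rigidly, and one can verify compatibility loop by loop as in loc.cit.

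Once the weak glueing condition is established, Theorem \ref{introduction smoothing theorem} lifts $\mathfrak V^{\mathrm{pre}}$ to an Osserman limit linear series on $(X_0,\bm n)$, and Osserman's smoothing theorem then produces a linear series $(\mathscr L_\eta, V_\eta)$ on the generic fiber $X_\eta$ of rank $r$ specializing to $\mathfrak V^{\mathrm{pre}}$. The specialization of $\mathscr L_\eta$ has multidegree $w_0$ on $\widetilde X_0$, hence induces the class $[D]$ on $\Gamma$, so the divisor class of $\mathscr L_\eta$ is the desired lift of $D$; combined with the semicontinuity of rank this lift has the same rank $r$ as $D$.
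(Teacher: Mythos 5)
Your overall architecture is the same as the paper's (produce a pre-limit linear series on $(X_0,\bm n)$ inducing $D$, verify the weak glueing condition, lift it via the few-edges dimension results, and smooth with Osserman's theorem on the given regular smoothing family, finishing with Baker's specialization inequality). But the heart of the proof is missing: you assert that a rank-$r$ Amini--Baker limit linear series with underlying class $[D]$ exists because the vanishing profiles are ``forced by the Baker--Norine rank inequality,'' yet the existence of such compatible data across all components is exactly what has to be constructed -- a divisor class of rank $r$ on $\Gamma$ need not support such a series, and this is the whole content of the theorem. The paper does this by hand: on each rational component $Z_i$ it writes down $f_i^0=1$ and $f_i^1$ (resp., in the vertex avoiding case, sections $f_i^j$) with vanishing orders at the nodes prescribed by the twisting divisors $D^{e_i,v_i}_{b_{v_{i-1},v_i}}$ (resp. by the slopes of functions $f^j_\Gamma$ realizing the representatives $D_j\sim D$ with $D_j-jv_0-(r-j)v_m\ge 0$ from \cite[Proposition 2.4]{cartwright2014lifting}), together with genericity conditions on the values at the nodes when $b_{v_{i-1},v_i}=0$, and then checks the multivanishing and weak glueing conditions for precisely these sections. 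Your proposal never specifies the sections, so there is nothing to verify the weak glueing condition against.

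Moreover, your description of the verification is not correct. The weak glueing condition is not a ``cycle-compatibility system for ratios of leading coefficients'' that one closes up using rescaling freedom: it only concerns dimensions of intersections of distinguished subspaces with torus orbits, i.e. matching vanishing patterns at the nodes over an edge of $\overline G$ (Definition \ref{weak glueing condition}, Remark \ref{weak glueing remark}). The rescaling/closing-up step you invoke is the passage from a weakly-glueable pre-limit series to an actual limit linear series, which is Lemma \ref{weak glueing smoothing} and rests on a codimension count against the $(\#\{e'\text{ over }e\}-1)$-dimensional glueing torus -- this is where the ``at most three edges'' hypothesis enters, and it should be cited, not re-derived by the vague argument you give. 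Your claim that the condition is vacuous for $r=0$ is also false as stated (it still demands matching vanishing patterns of one-dimensional spaces); the rank-zero case is instead disposed of trivially by surjectivity of the specialization map on effective divisors, which is how the paper reduces to $r(D)=1$. Finally, you ignore the degree constraints: the dimension theorem requires $d\le d'$, the tropical Riemann--Roch bound $\deg D\le g+1$ is what makes rank one fit into $d'=\min\{2g-2,g+1\}$, and the regime $\deg D>2g-2$ needs a separate (easy) Riemann--Roch argument, none of which appears in your sketch.
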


See Theorem \ref{lifting divisors chain of double loops} and Theorem \ref{lifting vertex avoiding divisors} for more precise statements. Note that, ignoring the field condition (see below), the first part of the theorem confirms \cite[Conjecture 1.5]{cools2012tropical} and \cite[Theorem 1.1]{cartwright2014lifting} for divisors of rank one, and generalizes the conditions in loc.cit. for the graph $\Gamma$ in the sense that there could be at most three edges (instead of two) between any pair of adjacent vertices.

\subsection{Conventions and Notations.}
All curves we consider are proper, (geometrically) reduced and connected, and at worst nodal. All nodal curves are split. All irreducible components of a nodal curve are smooth.

In the sequel we let $R$ be a complete discrete valuation ring with valuation $\mathrm{val}\colon R\rightarrow \mathbb Z_{\geq 0}$ and residue field $\kappa$. Let $K$ be the fraction field of $R$, which is a non-archimedean field with the induced norm $|x|=\exp(-\mathrm{val}(x))$. Let $\widetilde K$ be the completion of the algebraic closure of $K$ and $\widetilde R$ the valuation ring of $\widetilde K$. Note that $\widetilde K$ is still algebraically closed, the valuation/norm extends uniquely to $\widetilde K$, and 
$\widetilde K$ has residue field $\kappa$ if $\kappa$ is algebraically closed (cf. \cite[\S 1.1]{conrad2008several}).

Let $X_0$ be a curve with dual graph $G$. For $v\in V(G)$ let $Z_v$ be the irreducible component of $X_0$ corresponding to $v$. For $ e'\in E(G)$ that is incident on $v$ let $P_{ e'}$ be the node of $X_0$ corresponding to $ e'$ and $P_{e'}^v$ the preimage (of the normalization map of $X_0$) of $P_{e'}$ in $Z_v$. Let $\overline G$ be the graph obtained from $G$ as follows: for each pair of adjacent vertices $v$ and $v'$ replace all edges connecting $v$ and $v'$ by a single edge. For $ e\in E(\overline G)$ we denote $$\mathcal A_{ e}^v=\cup_{ e'}P_{ e'}^v\ \ \mathrm{and}\ \ \mathcal A_v=\cup_{e\ni v}\mathcal A_{ e}^v$$
where in the first expression $ e' $ runs through all edges of $G$ that lies over $ e$.

For a curve (graph, metric graph, metrized complex) $\mathscr X$ denote $\mathrm{Div}(\mathscr X)$ the space of divisors on $\mathscr X$. If $\mathscr X$ is a curve let $K(\mathscr X)$ be the space of rational functions on $\mathscr X$; if $\mathscr X$ is a graph (metric graph, metrized complex) denote the space of rational functions on $\mathscr X$ by $\mathrm{Rat}(\mathscr X)$. Given a rational function $f$ on $\mathscr X$ denote $\mathrm{div}(f)$ the associated divisor.

Let $C$ be a curve. Let $\mathscr O_C(D)$ be an invertible sheaf on $C$ where $D\in \mathrm{Div}(C)$ and take a nonzero section $s\in H^0(C,\mathscr O_C(D))$. We denote $\mathrm{div}^0(s)$ the effective divisor associated to $s$ that is rationally equivalent to $D$. For a divisor $D'\in\mathrm{Div}(C)$ and $P\in C$ denote $\mathrm{ord}_P(D')$ the coefficient of $P$ in $D'$, denote $\mathrm{ord}^0_P(s)=\mathrm{ord}_P(\mathrm{div}^0(s))$. For a rational function $f\in K(C)$ let $\mathrm{ord}_P(f)$ be the vanishing order of $f$ at $P$. Hence if $s$ is considered as a rational function then we have $\mathrm{ord}_P^0(s)=\mathrm{ord}_P(s)+\mathrm{ord}_P(D)$.  

Let $\Gamma$ be a metric graph with underlying graph $G$. For $e\in E(G)$ and $v\in V(G)$ incident on $e$ and $f\in \mathrm{Rat}(\Gamma)$, let $\mathrm{slp}_{e,v}(f)$ be the outgoing slope of $f$ at $v$ along the tangent direction corresponding to $e$. For $x\in \Gamma$ let $\mathrm{ord}_x(f)$ be the sum of outgoing slopes of $f$ over all tangent directions of $x$.
\\\\
\textbf{Acknowledgements.}
The author would like to thank Brian Osserman for introducing this problem and for helpful conversations, and thank Sam Payne for the idea of the proof of Theorem \ref{lifting vertex avoiding divisors}.

\section{Preliminaries}
We recall some relative notions about Osserman and Amini-Baker limit linear series.
Let $G$ be a graph without loops. Recall that a \textbf{chain structure} on $G$ is a function $\bm n\colon E(G)\rightarrow \mathbb Z_{>0}$. Let $\Gamma$ be the corresponding metric graph with edge length defined by $\bm n$, and $\widetilde G$ the graph obtained from $G$ by inserting $\bm n(e)-1$ vertices between the vertices adjacent to $e$ for all $e\in E(G)$. Then $\Gamma$ can also be obtained from $\widetilde G$ by associating unit edge lengths to all of $E(\widetilde G)$. We use $V(G)$ or $V(\widetilde G)$ instead of $V(\Gamma)$ for different choices of vertex sets of $\Gamma$.

\subsection{Some notions for graph theory.} We recall some concepts about admissible multidegrees (\cite{osserman2014dimension}) on a graph as well as reduced divisors (\cite{luo2011rank}).

\begin{defn}\label{admissible multidegree}
An \textbf{admissible multidegree} 
 $w=(w_G,\mu)$ of total degree $d$ on $(G,\bm n)$ consists of a function $w_G\colon V(G)\rightarrow \mathbb Z$ together with a tuple $(\mu(e))_{e\in E(G)}\in\Pi_{e\in E(G)}\mathbb Z/\bm n(e)\mathbb Z$ such that $d=\#\{e\in E(G)|\mu(e)\neq 0\}+\sum_{v\in V(G)}w_G(v)$.
\end{defn} 

Correspondingly, we define the notion of (integral) edge-reduced divisors, which is closely related to admissible multidegrees, as follows:

\begin{defn}
A divisor $ D$ on $\Gamma$ is \textbf{edge-reduced} if the restriction of $ D$ on each connected component of $\Gamma\backslash V(G)$ is either empty or an effective divisor of degree one. We say that $ D$ is \textbf{rational} (resp. \textbf{integral}) if $ D$ is supported on rational (resp. integral) points.
\end{defn}

Suppose $G$ is directed, there is a natural bijection $\varphi$ between the set of integral edge-reduced divisors on $\Gamma$ (of degree $d$) and the set of admissible multidegrees on $(G,\bm n)$ (of degree $d$). Precisely, for $D\in\mathrm{Div}(\Gamma)$ integral and edge-reduced, we set $w_G(v)=\deg(D|_v)$ and for $e\in E(G)$ with tail $v$ let $\mu(e)$ be the distance between the point in $D|_{\tilde e^\circ}$ and $v$ if $D|_{\tilde e^\circ}\neq 0$ and $\mu(e)=0$ otherwise, where $\tilde e$ is the edge of $\Gamma$ corresponding to $e$. 
Denote $D_w=\varphi^{-1}(w)$ for any admissible multidegree $w$.

For each pair of an edge $e$ and an adjacent vertex $v$, let $\sigma(e,v)=1$ if $v$ is the tail of $e$ and $-1$ otherwise. We have the following definition of twisting:

\begin{defn}\label{twist}
Let $w=(w_G,\mu)$ be an admissible multidegree on $(G,\bm n)$ and $v\in V(G)$. For each $e\in E(G)$ incident on $v$ we do the following operation:

(1) If $\mu(e)+\sigma(e,v)=0$ increase $w_G(v')$ by $1$ where $v'$ is the other vertex of $e$;

(2) If $\mu(e)=0$ decrease $w_G(v)$ by $1$; 

(3) Increase $\mu(e)$ by $\sigma(e,v)$.

The resulting admissible multidegree is called the \textbf{twist} of $w$ at $v$. The \textbf{negative twist} of $w$ at $v$ is the admissible multidegree $w'$ such that the twist of $w'$ at $v$ is equal to $w$.
 
\end{defn} 

Let $w_0$ be an admissible multidegree. We denote by $G(w_0)$ the directed graph with vertex set consisting of all admissible multidegrees obtained from $w_0$ by sequences of twists, and with an edge from $w$ to $w'$ if $w'$ is obtained from $w$ by twisting at some vertex $v\in V(G)$. Given $w\in V(G(w_0))$ and $v_1,...,v_m\in V(G)$ (not necessarily distinct), let $P(w,v_1,...,v_m)$ denote the path in $V(G(w_0))$ obtained by starting at $w$ and twisting successively at each $v_i$.

In the sense of an integral edge-reduced divisor $D$ on $\Gamma$ twisting at $v$ (where we choose $V(G)$ as the vertex set of $\Gamma$) is just firing all chips at edges incident on $v$ away from $v$ by distance 1 (assume that we put in advance one chip of $D|_v$ at the position of $v$ in $e$ for all $e$ incident on $v$ such that $\mu(e)=0$). Hence the definition of twisting is independent of the direction of $G$, and we get a linearly equivalent divisor after twisting.

\begin{ex}\label{twisting}
Consider a graph $G$ consists of two vertices $v$ and $v'$ connected by three edges $e_1,e_2,$ and $e_3$. Take a chain structure $\bm n$ with $\bm n(e_1)=4$, $\bm n(e_2)=2$ $\bm n(e_3)=3$ and a direction from $v$ to $v'$. Let $w=(w_G,\mu)$ where $w_G(v)=3$, $w_G(v')=0$, $\mu(e_1)=1$, $\mu(e_2)=1$ and $\mu(e_3)=0$. Then $D_w$ is as in the left of the following graphs, with each number represents the coefficient of the corresponding node in $D_w$. 
$$
\begin{tikzpicture}
\draw (0,0) -- (6,0);
\draw (0,0) arc (210:330:3.45cm and 3.45cm);
\draw (0,0) arc (-210:-330:3.45cm and 3.45cm);
\draw(0,0) node{$\bullet$};
\draw(3,0) node{$\bullet$};
\draw(6,0) node{$\bullet$};
\draw(3,1.7) node{$\bullet$};
\draw(1.3,1.26) node{$\bullet$};
\draw(4.7,1.26) node{$\bullet$};
\draw(1.8,-1.53) node{$\bullet$};
\draw(4.2,-1.53) node{$\bullet$};
\draw(1.3,1.26)node[circle, fill=black, scale=0.3, label=above:{1}]{};
\draw(3,0)node[circle, fill=black, scale=0.3, label=above:{1}]{};
\draw(0,0)node[circle, fill=black, scale=0.3, label=above:{3}]{};
\draw(0,0)node[circle, fill=black, scale=0.3, label=left:{$v$}]{};
\draw(0.8,1.3)node{$e_1$}{};
\draw(0.8,0.3)node{$e_2$}{};
\draw(0.8,-0.7)node{$e_3$}{};
\draw(6,0)node[circle, fill=black, scale=0.3, label=right:{$v'$}]{};
\draw (8,0) -- (14,0);
\draw (8,0) arc (210:330:3.45cm and 3.45cm);
\draw (8,0) arc (-210:-330:3.45cm and 3.45cm);
\draw(8,0) node{$\bullet$};
\draw(11,0) node{$\bullet$};
\draw(14,0) node{$\bullet$};
\draw(11,1.7) node{$\bullet$};
\draw(9.3,1.26) node{$\bullet$};
\draw(12.7,1.26) node{$\bullet$};
\draw(9.8,-1.53) node{$\bullet$};
\draw(12.2,-1.53) node{$\bullet$};
\draw(11,1.7) node[circle, fill=black, scale=0.3, label=above:{1}]{};
\draw(8,0)node[circle, fill=black, scale=0.3, label=above:{2}]{};
\draw(14,0) node[circle, fill=black, scale=0.3, label=above:{1}]{};
\draw(9.8,-1.53) node[circle, fill=black, scale=0.3, label=above:{1}]{};
\end{tikzpicture} 
$$
After twisting at $v$ we get $w'=(w_G',\mu')$ where $w_G'(v)=2$, $w_G'(v')=1$, $\mu'(e_1)=2$, $\mu'(e_2)=0$ and $\mu'(e_3)=1$. The induced $D_{w'}$ is given in the right graph.
\end{ex}

Let $\overline G$ denote the graph obtained from $G$ by contracting edges between every pair of adjacent vertices into one single edge. We say that $G$ is a \textbf{multitree} if $\overline G$ is a tree. Any graph with two vertices is a multitree. For multitrees we also consider partial twists:

\begin{defn}
Let $G$ be a multitree, and $(e,v)$ a pair of an edge $e$ and an adjacent vertex $v$ of $\overline G$, and $w$ an admissible multidegree on $(G,\bm n)$. The \textbf{twist} of $w$ at $(e,v)$ is the admissible multidegree obtained from $w$ by doing the operations as in Definition \ref{twist} for all edges in $G$ that lies over $e$.
\end{defn}

Again for an integral edge-reduced divisor on $\Gamma$ twisting at $(e,v)$ is just firing the chips at edges over $e$ away from $v$ by distance $1$. Note that the twists are commutative, and $w$ remains the same after twisting at all vertices of $G$. Hence the twists are invertible, as the negative twist at $v$ is the same as the composition of the twists at all $v'\neq v$. In addition, for multitrees twisting $w$ at $(e,v)$ 
is the same as twisting at all vertices $v'$
in the connected component of $\overline G\backslash\{e\}$ that contains $v$. If $v'$ is the other vertex of $e$ then twisting at $(e,v)$ is the inverse of twisting at $(e,v')$.	

\begin{defn}
Let $G$ be a multitree. An admissible multidegree $w$ is \textbf{concentrated} on $v$ if there is an ordering on $V(G)$ starting at $v$, and such that for each subsequent vertex $v'$, we have that $w$ becomes negative in vertex $v'$ after taking the composition of the negative twists at all previous vertices. A tuple $(w_v)_{v}$ of admissible multidegrees is \textbf{tight} if $w_v$ is concentrated at $v$ for all $v\in V(G)$, and for all $e\in E(\overline G)$ incident on vertices $v_1$ and $v_2$, we have that $w_{v_1}$ is obtained from $w_{v_2}$ by twisting $b_{v_1,v_2}$ times at $(e,v_2)$ for some $b_{v_1,v_2}\in \mathbb Z_{\geq 0}$.
\end{defn}

If $G$ has only two vertices $v$ and $v'$ then $w$ being concentrated on $v$ is the same as $w$ being negative at $v'$ after twisting at $(e,v')$. An example of a tight tuple of admissible multidegrees is given by the reduced divisors:

\begin{defn}
Given a vertex $v_0\in V(G)$, a divisor $D$ on $G$ is $v_0$-\textbf{reduced} if: (1) $D$ is effective on $V(G)\backslash \{v_0\}$; (2) for every nonempty subset $S\subset V(G)\backslash\{v_0\}$, there is some $v\in S$ such that $\deg (D|_v)$ (the coefficient of $v$ in $D$) is strictly smaller than the number of edges from $v$ to $V(G)\backslash S$.

Given a point $x_0\in \Gamma$, a divisor $D\in \mathrm{Div}(\Gamma)$ is $x_0$-\textbf{reduced} if: (1) $D$ is effective on $\Gamma\backslash \{x_0\}$; (2) for any closed connected subset $A\subset \Gamma$ there is a point $x\in \partial A$ such that $\deg (D|_x)$ is strictly less than the number of tangent directions of $\Gamma\backslash(A\backslash x)$ at $x$.
\end{defn}
 
For every divisor $D\in \mathrm{Div}(G)$ (resp. $D\in \mathrm{Div}(\Gamma)$) and $v_0\in V(G)$ (resp. $x_0\in \Gamma$) there is a unique divisor $D_{v_0}$ (resp. $D_{x_0}$) such that $D_{v_0}$ (resp. $D_{x_0}$) is linearly equivalent to $D$ and $v_0$-reduced (resp. $x_0$-reduced). One easily checks that a divisor $D\in  \mathrm{Div}(\widetilde G)$ is $v_0$ reduced if and only if it is $v_0$ reduced as a divisor on $\Gamma$.

\begin{prop}\label{reduced concentrated}
Let $G$ be a multitree. Let $w_v^{\mathrm{red}}\in V(G(w_0))$ be the admissible multidegree such that $D_{w_v^{\mathrm{red}}}$ is $v$-reduced. Then $w_v^{\mathrm{red}}$ is the unique admissible multidegree in $V(G(w_0))$ which is concentrated on $v$ and nonnegative on all $v'\neq v$, and $(w_v^{\mathrm{red}})_v$ is a tight tuple. 
\end{prop}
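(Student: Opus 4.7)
The plan is to exploit the bijection $\varphi$ between admissible multidegrees in $V(G(w_0))$ and the linear equivalence class of integral edge-reduced divisors containing $D_{w_0}$, under which twists correspond to chip-firing moves on $\Gamma$. Nonnegativity on $v'\neq v$ is then immediate: since $D_{w_v^{\mathrm{red}}}$ is $v$-reduced, it is effective away from $v$, so $w_G(v')=\deg(D_{w_v^{\mathrm{red}}}|_{v'})\geq 0$.

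For concentration, I would apply Dhar's burning algorithm on $\widetilde G$ starting the fire at $v$. Because $D_{w_v^{\mathrm{red}}}$ is $v$-reduced, the fire spreads to all of $V(\widetilde G)$ and, restricted to $V(G)$, yields an ordering $v=v_1,\ldots,v_k$ such that at each stage $v_i$ has ``too few chips'' to resist the fire arriving from $\{v_1,\ldots,v_{i-1}\}$ along edges whose interior chips (recorded by $\mu$) have already been consumed. Using commutativity of twists and the fact that twisting at every vertex of $V(G)$ acts as the identity on multidegrees, the composition of negative twists at $v_1,\ldots,v_{i-1}$ equals the composition of twists at $v_i,\ldots,v_k$; a direct check shows that this operation decreases $w_G(v_i)$ by exactly the ``burning count'' of edges at $v_i$ reachable from $\{v_1,\ldots,v_{i-1}\}$, rendering the coefficient at $v_i$ strictly negative.

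For uniqueness, suppose $w\in V(G(w_0))$ is concentrated on $v$ with ordering $v=v_1,\ldots,v_k$ and nonnegative off $v$. I would show $D_w$ is itself $v$-reduced by checking Dhar's criterion directly: for any nonempty $S\subseteq V(G)\setminus\{v\}$, pick the smallest index $i$ with $v_i\in S$; then $\{v_1,\ldots,v_{i-1}\}\subseteq V(G)\setminus S$, and the strict negativity of the multidegree at $v_i$ after the prescribed negative twists produces exactly the strict inequality demanded at $v_i$ relative to $S$. Uniqueness of the $v$-reduced representative in a linear equivalence class on $\Gamma$ (equivalently on $\widetilde G$) then forces $w=w_v^{\mathrm{red}}$.

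For tightness, removing an edge $e\in E(\overline G)$ joining $v_1$ and $v_2$ disconnects the tree $\overline G$ into $T_1\ni v_1$ and $T_2\ni v_2$, and as recorded in the text, twisting at $(e,v_2)$ equals the composition of the twists at every vertex in $T_2$. Starting from $w_{v_2}^{\mathrm{red}}$ (which is $v_2$-reduced) and repeatedly twisting at $(e,v_2)$ pushes chips from $T_2$ across $e$ toward $v_1$; I would show that after some $b_{v_1,v_2}\geq 0$ applications the result is $v_1$-reduced, and then invoke uniqueness of the reduced representative to identify it with $w_{v_1}^{\mathrm{red}}$, with $b_{v_1,v_2}\geq 0$ forced because the initial $w_{v_2}^{\mathrm{red}}$ is already $v_2$-reduced so chips only need to flow forward toward $v_1$. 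The main obstacle throughout is the careful bookkeeping required to convert Dhar's burning condition into claims about the sign of $w_G(v_i)$ after twists in the presence of the chain structure $\bm n$, since chips on edge interiors interact with twists differently according to whether $\mu(e)=0$; the multitree hypothesis is what keeps this manageable, as it allows $(e,v)$-twists to be localized to subtrees of $\overline G$.
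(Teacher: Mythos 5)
Your treatment of the first assertion is essentially correct and genuinely different from the paper, which simply quotes \cite[Corollary 3.9]{osserman2017limit}: once you translate $v$-reducedness of $D_w$ on $\Gamma$ into the statement that every nonempty $S\subseteq V(G)\setminus\{v\}$ contains a vertex $u$ with $w_G(u)$ strictly smaller than the number of chip-free edges from $u$ to $V(G)\setminus S$, and observe that the composition of single negative twists at a set $S$ of vertices lowers the value at $u\notin S$ by exactly the number of chip-free edges from $u$ into $S$, your burning-order argument gives concentration and your minimal-index argument gives uniqueness (via uniqueness of reduced representatives). You do owe the bookkeeping you acknowledge: reducedness lives on $\Gamma$ (equivalently $\widetilde G$), so you must also dispose of would-be unburnt sets consisting only of edge-interior points (easy: an interior chip on a loopless edge burns once both endpoints burn), and note that fire reaches a not-yet-burnt vertex of $V(G)$ only along edges carrying no interior chip.

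The tightness part, however, has a genuine gap: the claim that $b_{v_1,v_2}\geq 0$ is ``forced because chips only need to flow forward toward $v_1$'', and with it the termination of your procedure of iterating $(e,v_2)$-twists from $w_{v_2}^{\mathrm{red}}$ until a $v_1$-reduced multidegree appears. Take $G$ with two vertices $v_1,v_2$ joined by two edges, $\bm n\equiv 3$ (so $\Gamma$ is a circle of circumference $6$), and $w_0$ with $D_{w_0}=v_2-v_1$. Then $D_{w_0}$ is already $v_1$-reduced, while the $v_2$-reduced divisor in its class is $v_1-v_2$ (they differ by $\mathrm{div}(d(\cdot,v_2))=2v_2-2v_1$). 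One passes from $w_{v_2}^{\mathrm{red}}$ to $w_{v_1}^{\mathrm{red}}$ by twisting three times at $(e,v_1)$, not at $(e,v_2)$; iterating twists at $(e,v_2)$ from $w_{v_2}^{\mathrm{red}}$ only drives the coefficient at $v_2$ to $-\infty$ and never yields anything effective away from $v_1$, so your procedure does not terminate. Thus the exponent relating the two reduced multidegrees along $e$ can be negative in your chosen orientation; the paper is careful about precisely this (its proof says $b$ ``could be negative''), and tightness survives only because twisting at $(e,v_1)$ is inverse to twisting at $(e,v_2)$, so one of the two orientations always carries a nonnegative exponent. The substantive point you still need to prove is that $w_{v_1}^{\mathrm{red}}$ and $w_{v_2}^{\mathrm{red}}$ differ by twists at the single pair $(e,\cdot)$ at all; the paper extracts this from Dhar's algorithm, characterizing the exponent as the largest integer keeping the multidegree effective off the target vertex, and your directional claim would require extra hypotheses on $w_0$ that the proposition does not assume.
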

\begin{proof}
The first conclusion follows directly from \cite[Corollary 3.9]{osserman2017limit}. According to Dhar's burning algorithm (\cite[Algorithm 2.5]{luo2011rank}), for $v$ and $v'$ in $V(\overline G)$ adjacent to $e\in E(\overline G)$, we have that $w_{v'}^{\mathrm{red}}$ is obtained from $w_v^{\mathrm{red}}$ by twisting $b$ times at $(e,v)$, where $b$ is the largest number (could be negative) such that the resulting multidegree is effective on $V(\overline G)\backslash \{v'\}$.
\end{proof}

\subsection{Limit linear series on curves of pseudocompact type.}
In this subsection we recall the definition of limit linear series by Osserman in \cite{osserman2014dimension} for curves of pseudocompact type. Note that the notion of limit linear series for more general curves is given in \cite[Definition 2.21]{osserman2014limit}.
\begin{defn}\label{smoothing family} 
We say that $\pi\colon X\rightarrow B$ is a \textbf{smoothing family} if $B$ is the spectrum of a DVR, and:

(1) $\pi$ is flat and proper;

(2) the special fiber $X_0$ of $\pi$ is a (split) nodal curve;

(3) the generic fiber $X_\eta$ is smooth;

(4) $\pi$ admits sections through every component of $X_0$.

If further $X$ is regular we say that $X$ is a \textbf{regular smoothing family}.
\end{defn}

Note that if $B=\mathrm{Spec}(R)$ then by completeness condition (4) in the above definition is satisfied automatically. Let $X_0$ be a curve over $\kappa$ with dual graph $G$. For $v\in V(G)$, let $Z_v$ be the corresponding irreducible component of $X_0$ and $Z_v^c$ the closure of the complement of $Z_v$. 

\begin{defn}\label{enriched structure}
An \textbf{enriched structure} on $X_0$ consists of the data, for each $v\in V(G)$ a line bundle $\mathscr O_v$ on $X_0$, satisfying: 

(1) for any $v\in V(G)$ we have $\mathscr O_v|_{Z_v}\cong \mathscr O_{Z_v}(-(Z_v^c\cap Z_v))$ and $ \mathscr O_v|_{Z_v^c}\cong \mathscr O_{Z_v^c}(Z_v^c\cap Z_v)$; 

(2) $\bigotimes_{v\in V(G)}\mathscr O_v\cong \mathscr O_{X_0}$.
\end{defn}

Note that an enriched structure is always induced by any regular smoothing of $X_0$ (cf. \cite[Proposition 3.10]{osserman2014limit}).

Take a chain structure $\bm n$ on $G$ and an admissible multidegree $w_0$. Let $\widetilde X_0$ be the nodal curve obtained from $X_0$ by, for each $e\in E(G)$, inserting a chain of $\bm n(e)-1$ projective lines at the corresponding node, and $\widetilde G$ the dual graph of $\widetilde X_0$. We say that a divisor on $\widetilde G$ is \textbf{of multidegree} $w_0$ if (considered as a divisor on $\Gamma$) it is equal to $D_{w_0}$, and a line bundle $\mathscr L$ on $\widetilde X_0$ is \textbf{of multidegree} $w_0$ if its associated divisor on $\widetilde G$ is.
 
Given an enriched structure $(\mathscr O_v)_{v\in V(\widetilde G)}$ on $\widetilde X_0$ and a tuple of admissible multidegrees $(w_v)_{v\in V(G)}$ in $ V(G(w_0))$ such that $w_v$ is concentrated on $v$ and a line bundle $\mathscr L$ on $\widetilde X_0$ of multidegree $w_0$, we get a tuple of line bundles $(\mathscr L_{w_v})_{v\in V(G)}$. Roughly speaking, suppose $w_v$, as a divisor on $\widetilde G$, is obtained from $w_0$ by consecutively fire chips at a set $V\subset V(\widetilde G)$ of vertices, then $\mathscr L_{w_v}$ is obtained from $\mathscr L$ by tensoring with $\mathscr O_v$ for all $v\in V$. Note that $\mathscr L_{w_v}$ is of multidegree $w_v$. See \cite[\S 2]{osserman2014dimension} for details of this construction. 
 
\begin{defn}\label{pseudocompact type}
A curve over $\kappa$ is \textbf{of pseudocompact type} if its dual graph is a multitree.
\end{defn}

Now suppose $X_0$ is a curve of pseudocompact type, and the tuple $(w_v)_v$ is tight. For each pair $(e,v)$ of an edge and an adjacent vertex of $\overline G$, let $(D_i^{e,v})_{i\geq 0}$ be the effective divisors on $Z_v$ defined by setting $D_0^{e,v}=0$ and 
$$ D_{i+1}^{e,v}-D_i^{e,v}=\sum_{\begin{tiny}\begin{array}{cc}e'\ \mathrm{over}\ e\\\sigma(e',v)\mu_v(e')\equiv-i(\mathrm{mod}\bm n(e'))\end{array}\end{tiny}}P_{e'}^v$$
where $\mu_v\colon E(G)\rightarrow \mathbb Z/\bm n(e)\mathbb Z$ is induced by $w_v$.
Intuitively $D_i^{e,v}$ records the chips we lose (in every direction) at vertex $v$ when twisting $i$ times at $(e,v)$ from $w_v$. 

\begin{ex}
In Example \ref{twisting}, let $w_v=w$ and $w_{v'}$ be obtained from $w_v$ by twisting three times at $(e,v)$. It is easy to check that $(w_v,w_{v'})$ is a tight tuple. Straightforward calculation shows that 
$$D_i^{e,v}=0,P_{e_3}^v,P_{e_2}^v+P_{e_3}^v,P_{e_2}^v+P_{e_3}^v,P_{e_1}^v+2P_{e_2}^v+2P_{e_3}^v,...$$
for $i=0,1,2,3,4,...$
\end{ex}

For convenience we call $D_i^{e,v}$ the \textbf{twisting divisors} associated to $(w_v)_v$. In order to define limit linear series, we have
the following glueing isomorphism: 

\begin{prop}\label{glueing isomorphism} $\mathrm{(}$\cite[Proposition 2.14]{osserman2014dimension}$\mathrm{)}$
Let $X_0$ be a curve of pseudocompact type and $\widetilde X_0, (w_v)_v, \mathscr L, \mathscr L_{w_v}$ be as above. Denote $\mathscr L^v=\mathscr L_{w_v}|_{Z_v}$. Take vertices $v$ and $v'$ of $\overline G$ connected by an edge $e$. Then for $0\leq i\leq b_{v,v'}$ we have isomorphisms
$$\varphi_i^{e,v}\colon \mathscr L^v(-D_i^{e,v})/\mathscr L^v(-D_{i+1}^{e,v})\rightarrow \mathscr L^{v'}(-D_{b_{v,v'}-i}^{e,v'})/\mathscr L^{v'}(-D_{b_{v,v'}+1-i}^{e,v'}).$$
\end{prop}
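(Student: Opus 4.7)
The plan is to reduce the claim to a local analysis on each chain of projective lines in $\widetilde X_0$ corresponding to a single edge $e'\in E(G)$ over $e$, and then construct the isomorphism by composing natural residue-type identifications provided by the enriched structure along that chain.

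Both sides of the claimed isomorphism are skyscraper sheaves: the left-hand side is supported at those $P_{e'}^v\in\mathcal A_e^v$ satisfying $\sigma(e',v)\mu_v(e')\equiv-i\pmod{\bm n(e')}$, and the right-hand side at the analogous points on $Z_{v'}$ with $i$ replaced by $b_{v,v'}-i$. The tightness hypothesis says that $w_v$ is obtained from $w_{v'}$ by $b_{v,v'}$ successive twists at $(e,v')$, which by Definition \ref{twist} shifts $\mu_\bullet(e')$ by $\sigma(e',v')b_{v,v'}$; using $\sigma(e',v')=-\sigma(e',v)$ a direct congruence check shows that $P_{e'}^v$ lies in the support of the left-hand quotient if and only if $P_{e'}^{v'}$ lies in the support of the right-hand one. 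Both sides therefore decompose as direct sums of one-dimensional skyscrapers indexed by the same subset of edges $e'$ over $e$, and it suffices to construct, for each such $e'$, an isomorphism between the stalks at $P_{e'}^v$ and $P_{e'}^{v'}$.

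For a fixed $e'$, work inside the chain of components $Z_v,E_1,\ldots,E_{\bm n(e')-1},Z_{v'}$ of $\widetilde X_0$ inserted at the node of $e'$, where each $E_k\cong\mathbb P^1$. The enriched structure assigns, to every $u\in V(\widetilde G)$, a line bundle $\mathscr O_u$ on $\widetilde X_0$ whose component-wise restrictions are prescribed by Definition \ref{enriched structure}; tensoring $\mathscr L_{w_{v'}}$ by an explicit product of these (dictated by the chip-firing sequence converting $w_{v'}$ to $w_v$) yields $\mathscr L_{w_v}$. For $0\le i\le b_{v,v'}$ this forces the restriction of the relevant shifted line bundle to each intermediate $E_k$ to have degree $0$ or $1$, and in either case the fibres at the two nodes of $E_k\cong\mathbb P^1$ are canonically identified (via the unique-up-to-scalar global section in the degree-$0$ case, and via evaluation at the opposite node in the degree-$1$ case). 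Composing these identifications along the chain, and pinning down the ambiguous scalars using the product relation $\bigotimes_{u\in V(\widetilde G)}\mathscr O_u\cong\mathscr O_{\widetilde X_0}$, produces the desired $\varphi_i^{e,v}$.

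The main obstacle is strictly bookkeeping: one must track, node by node along the chain, how the degrees of the shifted line bundle on the $E_k$'s evolve as $i$ increases, and verify that the passage from $\mathscr L^v(-D_i^{e,v})$ to $\mathscr L^v(-D_{i+1}^{e,v})$ is mirrored on the $v'$-side by the passage from $\mathscr L^{v'}(-D_{b_{v,v'}-i}^{e,v'})$ to $\mathscr L^{v'}(-D_{b_{v,v'}+1-i}^{e,v'})$ under the chain-of-identifications. The bound $i\le b_{v,v'}$ arises precisely because outside this range some intermediate degree becomes negative and the residue-style identification breaks down.
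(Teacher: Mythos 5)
The paper itself does not prove this proposition; it is imported verbatim from \cite[Proposition 2.14]{osserman2014dimension}, so your attempt has to be measured against the construction there. Your first step is fine: the congruence computation using tightness and $\sigma(e',v')=-\sigma(e',v)$ does show that $P_{e'}^{v}$ occurs in $D_{i+1}^{e,v}-D_{i}^{e,v}$ exactly when $P_{e'}^{v'}$ occurs in $D_{b_{v,v'}-i+1}^{e,v'}-D_{b_{v,v'}-i}^{e,v'}$, and both quotients split as direct sums of one-dimensional skyscrapers indexed by the same set of edges over $e$, so it suffices to produce an identification edge by edge along the inserted chain.

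The gap is in how you carry out that identification. A degree-$1$ line bundle on $\mathbb P^1$ admits no canonical identification of its fibres at two distinct points: a global section with prescribed value at one node is determined only modulo the sections vanishing there, and those sections do not vanish at the other node, so ``evaluation at the opposite node'' is not a well-defined map on the fibre; the relation $\bigotimes_{u}\mathscr O_u\cong\mathscr O_{\widetilde X_0}$ does not remove this ambiguity. And an arbitrary abstract isomorphism of one-dimensional spaces is useless here, since Definition \ref{limit linear series on curves} and Definition \ref{weak glueing condition} use the specific $\varphi_i^{e,v}$ (e.g.\ that it respects torus orbits). What makes the construction work, and what is missing from your write-up, is that the degree-$1$ case never arises for the relevant edges: if $P_{e'}^{v}$ lies in the support of $D_{i+1}^{e,v}-D_i^{e,v}$, i.e.\ $\sigma(e',v)\mu_{w_v}(e')\equiv -i \pmod{\bm n(e')}$, then the intermediate multidegree $w_i$ obtained from $w_v$ by twisting $i$ times at $(e,v)$ satisfies $\sigma(e',v)\mu_{w_i}(e')=\sigma(e',v)\mu_{w_v}(e')+i\equiv 0$, so $\mu_{w_i}(e')=0$ and $\mathscr L_{w_i}$ has degree $0$ on every inserted component of the chain over $e'$; moreover $\mathscr L_{w_i}|_{Z_v}\cong\mathscr L^{v}(-D_i^{e,v})$ and, because $w_i$ is also obtained from $w_{v'}$ by twisting $b_{v,v'}-i$ times at $(e,v')$ (this is where $0\le i\le b_{v,v'}$ enters, not any negativity of intermediate degrees, which are always $0$ or $1$ by admissibility), $\mathscr L_{w_i}|_{Z_{v'}}\cong\mathscr L^{v'}(-D_{b_{v,v'}-i}^{e,v'})$. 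Composing the canonical degree-$0$ identifications (evaluation of the unique-up-to-scalar global section at the two nodes of each inserted $\mathbb P^1$) along the chain then gives $\varphi_i^{e,v}$ on the summand indexed by $e'$. Without this observation, your fallback mechanism for degree $1$ fails and the argument does not go through as written.
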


Note that in the above proposition, if $ D_{i+1}^{e,v}-D_i^{e,v}=\sum_{e'\in I}P_{e'}^v$ then $ D_{b_{v,v'}-i+1}^{e,v'}-D_{b_{v,v'}-i}^{e,v'}=\sum_{e'\in I}P_{e'}^{v'}$. In particular we have $\deg(D_{i+1}^{e,v}-D_i^{e,v})=\deg(D_{b_{v,v'}-i+1}^{e,v'}-D_{b_{v,v'}-i}^{e,v'})$ for all $i$.

\begin{defn}
(1) Let $Z$ be  a smooth curve and $r,d\geq 0$. Let $D_0\leq\cdots \leq D_{b+1}$ be a sequence of effective divisors on $X$. We say $j$ is \textbf{critical} for $D_\bullet$ if $D_{j+1}\neq D_j$. 

(2) Suppose further that $D_0=0$ and $\deg D_{b+1}>d$. Given $(\mathscr L,V)$ a $\mathfrak g^r_d$ on $Z$, we define the \textbf{multivanishing sequence} of $(\mathscr L,V)$ along $D_\bullet$ to be the sequence 
$$a_0\leq\cdots\leq a_r$$
where a value $a$ appears in the sequence $m$ times if for some $i$ we have $\deg D_i=a$ and $\deg D_{i+1}>a$, and $\dim (V(-D_i)/V(-D_{i+1}))=m$.

(3) Given $s\in V$ nonzero, the \textbf{order of vanishing} $\mathrm{ord}_{D_\bullet}(s)$ along $D_\bullet$ is $\deg D_i$ where $i$ is maximal so that $s\in V(-D_i)$.
\end{defn}

One checks easily that $j$ is critical for $D_\bullet^{e,v}$ if and only if $b_{v,v'}-j$ is critical for $D_\bullet^{e,v'}$.
We are now able to state the definition of limit linear series on a curve of pseudocompact type (cf. \cite[Definition 2.16]{osserman2014dimension}):

\begin{defn}\label{limit linear series on curves}
Suppose we have a tuple $(\mathscr L, (V_v)_{v\in V(G)})$ with $\mathscr L$ a line bundle of multidegree $w_0$ on $\widetilde X_0$, and each $V_v$ is a $(r+1)$-dimensional space of global sections of $\mathscr L^v$ as in Proposition \ref{glueing isomorphism}. For each pair $(e,v)$ in $\overline G$ where $v$ is a vertex of $e$, let $a_0^{e,v},...,a_r^{e,v}$ be the multivanishing sequence of $V_v$ along $D^{e,v}_\bullet$. Then $(\mathscr L, (V_v)_{v\in V(G)})$ is a \textbf{limit linear series} of multidegree $w_0$ with respect to $(w_v)_{v\in V(G)}$ on $(X_0,\bm n)$ if for any $e\in E(\overline G)$ with vertices $v$ and $v'$ we have:

\ (I) for $l=0,...,r,$ if $a_l^{e,v}=\deg D_j^{e,v}$ with $j$ critical for $D_\bullet^{e,v}$, then $a_{r-l}^{e,v'}\geq \deg D_{b_{v,v'}-j} ^{e,v'}$;

(II) there exists bases $s_0^{e,v},...,s_r^{e,v}$ of $V_v$ and $s_0^{e,v'},...,s_r^{e,v'}$ of $V_{v'}$ such that 
$$\mathrm{ord}_{D_\bullet^{e,v}}s_l^{e,v}=a_l^{e,v}\ \ \ \ \mathrm{for}\ \ l=0,...,r,$$
and similarly for $s_l^{e,v'}$, and for all $l$ such that $a_{r-l}^{e,v'}= \deg D_{b_{v,v'}-j} ^{e,v'}$ (where $j$ as in (I)) we have $\varphi_j^{e,v}(s_l^{e,v})=s_{r-l}^{e,v'}$ where we consider $s_l^{e,v}\in V_v(-D_j^{e,v})$ and $s_{r-l}^{e,v'}\in V_{v'}(-D_{b_{v,v'}-j}^{e,v'})$ and $ \varphi_j^{e,v}$ is as in Proposition \ref{glueing isomorphism}.
\end{defn}

\begin{rem}\label{def of lls}
Let $g_j=\#\{0\leq l\leq r|a_l^{e,v}=\deg D_j^{e,v}\ \mathrm{and}\ a_{r-l}^{e,v'}= \deg D_{b_{v,v'}-j} ^{e,v'}\}$. Using the identification of the two linear spaces induced by $\varphi_j^{e,v}$, the condition (2) in the above definition is equivalent to that the space $$(V_v(-D_j^{e,v})/V_v(-D_{j+1}^{e,v}))\cap (V_{v'}(-D_{b_{v,v'}-j}^{e,v'})/V_v(-D_{b_{v,v'}-j+1}^{e,v'}))$$has at least dimension $g_j$.
\end{rem}

\subsection{Limit linear series on metrized complexes.} In this subsection we introduce Amini and Baker's construction in \cite{amini2015linear} of metrized complexes and limit linear series on them. Suppose for now that $\kappa$ is algebraically closed.
Recall that a \textbf{metrized complex} $\mathfrak C$ of curves over $\kappa$ consists of the following data: (1) a metric graph $\Gamma$ with underlying graph $G$; (2) for each vertex $v$ of $G$ a smooth curve $C_v$ over $\kappa$; (3) for each vertex $v$ of $G$, a bijection $ e\mapsto x_{e}^v$ between the edges of $G$ incident on $v$ (recall that $G$ is assumed loopless in the beginning of this section) and a subset  $A_v$ of $C_v(\kappa)$. For consistency of symbols we assume that $\Gamma$ is of integral edge lengths.

The geometric realization $|\mathfrak C|$ of $\mathfrak C$ is the union of the edges of $\Gamma$ and the collection of curves $C_v$, with each endpoint $v$ of $e$ identified with $x_e^v$. The following is a geometric realization of a metrized complex which has rational $C_v$ for all $v\in V(G)$ and whose underlying graph $G$ is $K_2$.
$$
\begin{tikzpicture}
\draw (0,0) circle (1cm);
\draw (-1,0) arc (180:360:1cm and 0.4cm);
\draw[dashed] (-1,0) arc (180:0:1cm and 0.4cm);
\draw (5,0) circle (1cm);
\draw (4,0) arc (180:360:1cm and 0.4cm);
\draw[dashed] (4,0) arc (180:0:1cm and 0.4cm);
\draw (0: 1) node[circle, fill=black, scale=0.3,]{};
\draw (0: 1) -- (0: 4);
\draw (0: 4) node[circle, fill=black, scale=0.3,]{};
\end{tikzpicture}
$$ 

\begin{ex}\label{curve complex}
Given a curve $X_0$ with dual graph $G$ and a chain structure $\bm n$ on $G$, we can associate a metrized complex $\mathfrak C_{X_0,\bm n}$: let $\Gamma$ be the metric graph with underlying graph $G$ and edge lengths given by $\bm n$, let $C_v$ be the component $Z_v$ and $x_e^v=P_e^v$ (hence $A_v=\mathcal A_v$). If the edge lengths are all $1$ we denote by $\mathfrak C_{X_0}$ instead.
\end{ex}
A divisor $\mathcal D$ on $\mathfrak C$ is a finite formal sum of points in $|\mathfrak C|$. Denoting $\mathcal D=\sum_{x\in |\mathfrak C|}a_x(x)$, we can naturally associate a divisor $\mathcal D_\Gamma$ on $\Gamma$, called the $\Gamma$-\textbf{part} of $\mathcal D$, as well as, for each $v\in V(G)$, a divisor $\mathcal D_v$ on $C_v$ called the $C_v$-\textbf{part} of $\mathcal D$ as follows:
$$\mathcal D_v=\sum_{x\in C_v(\kappa)}a_x(x)\ \ \mathrm{and}\ \ \mathcal D_\Gamma=\sum_{x\in \Gamma\backslash V(G)}a_x(x)+\sum_{v\in V(G)}\deg (\mathcal D_v)(v).$$  
Note that we have $\deg \mathcal D=\deg \mathcal D_\Gamma$.

A \textbf{nonzero rational function} $\mathfrak f$ on $\mathfrak C$ is the data of a rational function $\mathfrak f_\Gamma$ (the $\Gamma$-\textbf{part}) on $\Gamma$ and nonzero rational functions $\mathfrak f_v$ (the $C_v$-\textbf{part}) on $C_v$ for each $v\in V(G)$. The divisor associated to $\mathfrak f$ is defined to be 
$$\mathrm{div}(\mathfrak f)\colon =\sum_{x\in|\mathfrak C|}\mathrm{ord}_x(\mathfrak f)(x)$$
where $\mathrm{ord}_x(\mathfrak f)$ is as follows: if $x\in \Gamma\backslash V(G)$ then $\mathrm{ord}_x(\mathfrak f)=\mathrm{ord}_x(f_\Gamma)$; 
 if $x\in C_v(\kappa)\backslash A_v$ then $\mathrm{ord}_x(\mathfrak f)=\mathrm{ord}_x(\mathfrak f_v)$; if $x=P_e^v\in \mathcal A_v$ then $\mathrm{ord}_x(\mathfrak f)=\mathrm{ord}_x(\mathfrak f_v)+\mathrm{slp}_{e,v}(\mathfrak f_\Gamma)$. In particular the $\Gamma$-part of $\mathrm{div}(\mathfrak f)$ is equal to $\mathrm{div}(\mathfrak f_\Gamma)$. 

Similarly to the divisors on graphs we have the following definitions: 
 
\begin{defn}
Suppose $\Gamma$ has integral edge lengths.
A divisor $\mathcal D$ on $\mathfrak C$ is \textbf{rational, integral, and edge-reduced} if $\mathcal D_\Gamma$ is. Similarly, let $\bm n$ be the chain structure on $G$ induced by $\Gamma$ and $w$ an admissible multidegree on $(G,\bm n)$, then $\mathcal D$ is \textbf{of multidegree} $w$ if $\mathcal D_\Gamma=D_w$. 
\end{defn}
 
\begin{defn}
Divisors of the form $\mathrm{div}(\mathfrak f)$ are called \textbf{principal}. Two divisors in $\mathrm{Div}(\mathfrak C)$ are called \textbf{linearly equivalent} if they differ by a principal divisor. The \textbf{rank} $r_{\mathfrak C}(\mathcal D)$ of a divisor $\mathcal D$ is the largest integer $r$ such that $\mathcal D-\mathcal E$ is linearly equivalent to an effective divisor for all effective divisor $\mathcal E$ of degree $r$ on $\mathfrak C$.
\end{defn}

The definition of the rank of $\mathcal D$ can be refined by restricting the set of rational functions on $\mathfrak C$ that induce linear equivalence, as follows:

\begin{defn}
Suppose we are given, for each $v\in V(G)$, a non-empty $\kappa$-linear subspace $F_v$ of $K(C_v)$. Denote by $\mathcal F$ the collection of all $F_v$. We define the $\mathcal F$-\textbf{rank} $r_{\mathfrak C,\mathcal F}(\mathcal D)$ of $\mathcal D$ to be the maximum integer $r$ such that for every effective divisor $\mathcal E$ of degree $r$, there is a nonzero rational function $\mathfrak f$ on $\mathfrak C$ with $\mathfrak f_v\in F_v$ for all $v\in V(G)$ such that $\mathcal D+\mathrm{div}(\mathfrak f)-\mathcal E\geq 0$.
\end{defn}

Note that by definition we always have $r_{\mathfrak C,\mathcal F}(\mathcal D)\leq r_{\mathfrak C}(\mathcal D)\leq r(\mathcal D_\Gamma)$. We are now able to define limit linear series on metrized complexes:

\begin{defn}
A \textbf{limit linear series} of degree $d$ and rank $r$ on $\mathfrak C$ is a (equivalence class of) pair $(\mathcal D,\mathcal H)$ consisting of a divisor $\mathcal D$ of degree $d$ and a collection $\mathcal H$ of $(r+1)$-dimensional subspaces $H_v\subset K(C_v)$ for all $v\in V(G)$, such that $r_{\mathfrak C,\mathcal H}(\mathcal D)=r$. Two pairs $(\mathcal D,\mathcal H)$ and $(\mathcal D',\mathcal H')$ are considered equivalent if there is a rational function $\mathfrak f$ on $\mathfrak C$ such that $\mathcal D'=\mathcal D+\mathrm{div}(\mathfrak f)$ and $H_v=H_v'\cdot \mathfrak f_v$ for all $v\in V(G)$.
\end{defn}

\begin{defn}
A limit linear series on $\mathfrak C$ is \textbf{of multidegree} $w_0$ if there is a representative $(\mathcal D,\mathcal H)$ such that $\mathcal D$ is so.
\end{defn}

Let $X$ be a smooth curve over $\widetilde K$. Recall that a \textbf{strongly semistable model} for $X$ is a flat and integral proper relative curve over $\widetilde R$ whose generic fiber is isomorphic to $X$ and whose special fiber is a curve in our setting (or a strongly semistable curve over $\kappa$ as in \cite[\S 4.1]{amini2015linear}). Given a strongly semistable model $\mathfrak X$, again there is a associated metrized complex $\mathfrak C\mathfrak X$ where the underlying graph $G$ is the dual graph of the special fiber $X_0=\overline {\mathfrak X}$ and $C_v$ is the component $Z_v$ of $X_0$ and $x_e^v=P_e^v$. Moreover, the length $l(e)$ of $e\in E(\Gamma)=E(G)$ is val$(\omega_e)$ for some $w_e\in \widetilde R$ such that the local equation of $\mathfrak X$ at the node $P_e$ of $e$ is $xy-\omega_e$. Equivalently, consider the natural reduction map $\mathrm{red}\colon X(\widetilde K)\rightarrow X_0(\kappa)$ induced by the bijection between $X(\widetilde K)$ and $\mathfrak X(\widetilde R)$; this extends to a map $\mathrm{red}\colon X^{\mathrm{an}}\rightarrow X_0$ and $l(e)$ is the modulus of the open (analytic) annulus $\mathrm{red}^{-1}(P_e)$ in $X^{\mathrm{an}}$, where $X^{\mathrm{an}}$ is the Berkovich analytification of $X$. Note that any metrized complex with edge lengths contained in $\mathrm{val}(\widetilde K)$ can be constructed from strongly semistable models (cf. \cite[Theorem 4.1]{amini2015linear}). 

There is a canonical embedding of $\Gamma$ into $X^{\mathrm{an}}$ as well as a canonical retraction map $\tau\colon X^{\mathrm{an}}\rightarrow \Gamma$, which induces by linearity a specialization map $\tau_*\colon \mathrm{\mathrm{Div}}(X)\rightarrow \mathrm{Div}(\Gamma)$ which maps $X(\widetilde K)$ to the set of rational points of $\Gamma$. For $P\in X(\widetilde K)$, if $\tau_*(P)=v\in V(G)$ then $\mathrm{red}(P)$ is a nonsingular closed point of $X_0$ in $C_v$. We thus have the \textbf{specialization map} of divisors 
$\tau_*^{\mathfrak C\mathfrak X}\colon\mathrm{Div}( X)\rightarrow \mathrm{Div}(\mathfrak C\mathfrak X)$ given by the linearly extension of:
$$\tau_*^{\mathfrak C\mathfrak X}(P)=\left\{\begin{array}{ll}\tau_*(P) \ \ \ \ \tau_*(P)\not\in V(G)\\\mathrm{red}(P)\ \ \tau_*(P)\in V(G).\end{array}\right.$$

On the other hand, let $x\in X^{\mathrm{an}}$ be a point of type $2$. The completed residue field $\widetilde{\mathcal H(x)}$ of $x$ has transcendence degree one over $\kappa$ and corresponds to a curve $C_x$ over $\kappa$. Given a nonzero rational function $f$ on $X$, choose $c\in \widetilde K^\times$ such that $|f(x)|=|c|$. We denote $f_x$ as the image of $c^{-1}f$ in $K(C_x)\cong\widetilde{\mathcal H(x)}$, which is well defined up to scaling by $\kappa^\times$. We call $f_x$ the \textbf{normalized reduction} of $f$. Note that the normalized reduction of a $\widetilde K$-vector space $V$ is a $\kappa$-vector space of the same dimension, whereas the normalized reduction of a basis of $V$ is not necessarily a basis of the normalized reduction of $V$. See \cite[Lemma 4.3]{amini2015linear} for details.

\begin{defn} Given $X$ and $\mathfrak X$ as above.
The \textbf{specialization} $\tau_*^{\mathfrak C\mathfrak X}(f)$ of $0\neq f \in K(X)$ is a nonzero rational function on $\mathfrak C\mathfrak X$ whose $\Gamma$-part is the restriction to $\Gamma$ of the piecewise linear function $F=\mathrm{log}|f|$ on $X^{\mathrm{an}}$ and whose $Z_v$-part is (up to multiplication by $\kappa ^\times$) the normalized reduction $f_{x_v}$ in which $x_v$, whose completed residue field is identified with the function field of $Z_v$, is the image of $v$ under the embedding $\Gamma\hookrightarrow X^{\mathrm{an}}$ mentioned above. 
The \textbf{specialization} $\tau_*^{\mathfrak C\mathfrak X}(V)$ of a linear subspace $V$ of $K(X)$ is a collection $\{F_v\}_{v\in V(G)}$ of space of rational functions on $Z_v$, where $F_v=\{f_{x_v}|f\in V\}$. We denote by $\tau_{*v}^{\mathfrak C\mathfrak X}$ the $Z_v$-part of $\tau_*^{\mathfrak C\mathfrak X}$, or equivalently the normalized reduction to $\widetilde{\mathcal H(x_v)}$.

\end{defn}    

Note that $\tau_{*v}^{\mathfrak C\mathfrak X}(V)$ is a linear space of the same dimension as $V$ as mentioned above. Similar to the classical case, we have that the specialization of a linear series is a limit linear series:
 
\begin{thm}
$\mathrm{(}$\cite[Theorem 5.9]{amini2015linear}$\mathrm{)}$ Let $X,\mathfrak X$ be as above. Let $D$ be a divisor on $X$ and $(\mathscr O_X(D),V)$ be a $\mathfrak g^r_d$ on $X$, where $V\subset H^0(X,\mathscr O_X(D))\subset K(X).$ Then the pair $(\tau_*^{\mathfrak C\mathfrak X}(D),\tau_*^{\mathfrak C\mathfrak X}(V))$ is a limit $\mathfrak g^r_d$ on $\mathfrak C\mathfrak X$.
\end{thm}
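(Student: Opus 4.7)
The plan is to verify the three conditions implicit in the conclusion: that $\tau_*^{\mathfrak C\mathfrak X}(D)$ has degree $d$, that $\tau_*^{\mathfrak C\mathfrak X}(V)=\{F_v\}_{v\in V(G)}$ consists of $(r+1)$-dimensional subspaces $F_v\subset K(Z_v)$, and that the $\tau_*^{\mathfrak C\mathfrak X}(V)$-rank of $\tau_*^{\mathfrak C\mathfrak X}(D)$ equals $r$. The first is immediate since the retraction/reduction sends each $\widetilde K$-point to a degree-one divisor, so total degree is preserved under linear extension. The second follows directly from \cite[Lemma 4.3]{amini2015linear}, asserting that normalized reduction at the type-$2$ point $x_v$ carries a $\widetilde K$-subspace of $K(X)$ to a $\kappa$-subspace of $\widetilde{\mathcal H(x_v)}\cong K(Z_v)$ of the same dimension.

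The core content is the rank equality. The upper bound $r_{\mathfrak C\mathfrak X,\mathcal H}(\tau_*^{\mathfrak C\mathfrak X}(D))\leq r$ is essentially a dimension count: imposing $r+1$ generic vanishing conditions on a $C_v$-part exhausts the $(r+1)$-dimensional $F_v$. For the lower bound, given any effective divisor $\mathcal E$ of degree $r$ on $\mathfrak C\mathfrak X$, I would produce $\mathfrak f$ with $\mathfrak f_v\in F_v$ and $\tau_*^{\mathfrak C\mathfrak X}(D)+\mathrm{div}(\mathfrak f)-\mathcal E\geq 0$ by lifting and specializing. The first step is to lift $\mathcal E$ to an effective divisor $E$ of degree $r$ on $X$ with $\tau_*^{\mathfrak C\mathfrak X}(E)\geq \mathcal E$: a point $x\in C_v(\kappa)\setminus A_v$ lifts by Hensel's lemma to a $\widetilde K$-point of $X$ whose specialization is $x$; a rational interior point $x\in\Gamma\setminus V(G)$ on an edge $e$ is realized as $\tau_*(P)$ for some $P\in X(\widetilde K)$ whose reduction lies in the formal annulus $\mathrm{red}^{-1}(P_e)$ at the prescribed distance; a marked point $x_e^v\in\mathcal A_v$ is handled by perturbing into $C_v\setminus A_v$. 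With $E$ constructed, the hypothesis $r_X(D)=r$ yields a nonzero $s\in V$ with $\mathrm{div}(s)+D\geq E$.

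To conclude I would invoke the slope/Poincar\'e--Lelong formula for the specialization of rational functions: the identity
$$\tau_*^{\mathfrak C\mathfrak X}(\mathrm{div}(s))=\mathrm{div}\bigl(\tau_*^{\mathfrak C\mathfrak X}(s)\bigr),$$
whose $\Gamma$-part records that $\log|s|$ is piecewise linear on $X^{\mathrm{an}}$ with integer slopes along $\Gamma$, and whose $Z_v$-part matches the normalized reduction $s_{x_v}$ up to slope corrections at the marked points $x_e^v$. Specializing $D+\mathrm{div}(s)\geq E$ and using that $\tau_*^{\mathfrak C\mathfrak X}$ preserves the partial order on effective divisors gives $\tau_*^{\mathfrak C\mathfrak X}(D)+\mathrm{div}(\tau_*^{\mathfrak C\mathfrak X}(s))\geq\tau_*^{\mathfrak C\mathfrak X}(E)\geq\mathcal E$, and $\mathfrak f:=\tau_*^{\mathfrak C\mathfrak X}(s)$ has $\mathfrak f_v\in F_v$ by construction of $\mathcal H=\tau_*^{\mathfrak C\mathfrak X}(V)$.

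The main obstacle is the slope formula itself: one has to reconcile the outgoing-slope contribution $\mathrm{slp}_{e,v}(\log|s|)$ with the order of vanishing of the normalized reduction $s_{x_v}$ at the marked point $x_e^v$, which requires genuine non-Archimedean input (the skeleton structure of $X^{\mathrm{an}}$, behavior of norms on annuli, and compatibility with the intersection theory on the semistable model $\mathfrak X$). The lifting of $\mathcal E$ is a secondary technical point, relying on essential surjectivity of $\tau_*^{\mathfrak C\mathfrak X}$ onto points of $|\mathfrak C\mathfrak X|$, which in turn uses that edge lengths lie in $\mathrm{val}(\widetilde K)$ together with Hensel's lemma at smooth points of $X_0$.
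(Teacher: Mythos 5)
This statement is not proved in the paper at all: it is quoted from Amini--Baker (\cite[Theorem 5.9]{amini2015linear}), so the only meaningful comparison is with the proof in that reference. Your routine steps are fine: degree is preserved because $\tau_*^{\mathfrak C\mathfrak X}$ sends points to points; $\dim F_v=r+1$ is exactly \cite[Lemma 4.3]{amini2015linear}; the upper bound on the $\mathcal H$-rank follows by imposing vanishing at $r+1$ points of a single $C_v$ chosen outside the poles of $F_v$ and the support of $\mathcal D_v$ and so that only $0\in F_v$ vanishes at all of them (recall each $\mathfrak f_v$ must be a \emph{nonzero} rational function); and the compatibility $\tau_*^{\mathfrak C\mathfrak X}(\mathrm{div}(s))=\mathrm{div}(\tau_*^{\mathfrak C\mathfrak X}(s))$, which you flag as the main obstacle, does not need to be reproved: it is \cite[Theorem 4.5]{amini2015linear} (resting on the slope formula, Theorem 4.4 there), and is used in exactly this way in the proof of Theorem \ref{smoothness implies weak glueing} in the present paper. (A small slip: what produces $s\in V$ with $\mathrm{div}(s)+D\geq E$ is $\dim V=r+1$, not a statement about $r_X(D)$; the complete linear system of $D$ may have larger rank.)

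The genuine gap is in the lower bound, at the step ``lift $\mathcal E$ to an effective divisor $E$ on $X$ with $\tau_*^{\mathfrak C\mathfrak X}(E)=\mathcal E$.'' The image of $\tau_*^{\mathfrak C\mathfrak X}$ on closed points consists only of points of $\Gamma\setminus V(G)$ at distances in the value group (here $\mathbb Q$) and of points of $C_v(\kappa)\setminus A_v$: if $\tau_*(P)=v$ then $\mathrm{red}(P)$ is a nonsingular point of $X_0$, and if $P$ reduces to a node then $\tau_*(P)$ lies in the interior of the corresponding edge. But the $\mathcal H$-rank quantifies over \emph{all} effective divisors $\mathcal E$ of degree $r$ on $\mathfrak C\mathfrak X$, and these may contain marked points $x_e^v\in A_v$ as well as edge points at irrational distance; neither kind of point is liftable, and your proposal only treats rational edge points and ``perturbs'' marked points. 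The perturbation does not close the gap: effectivity of $\mathcal D+\mathrm{div}(\mathfrak f)$ at $x_e^v$ is the condition $\mathrm{ord}_{x_e^v}(\mathcal D_v)+\mathrm{ord}_{x_e^v}(\mathfrak f_v)+\mathrm{slp}_{e,v}(\mathfrak f_\Gamma)\geq \mathrm{ord}_{x_e^v}(\mathcal E)$, which mixes the vanishing order of the $C_v$-part with the incoming slope of the $\Gamma$-part, and a function $\mathfrak f$ produced for a divisor with the chip moved to a nearby point of $C_v\setminus A_v$ or of $e^\circ$ gives no control over this combination, so the inequality at the marked point is neither implied by nor implies the perturbed one. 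To repair the argument you need an additional input — for instance a rank-determining/limiting argument showing it suffices to test $\mathcal E$ supported at liftable points (using that orders of vanishing and slopes can only improve in a limit, with the $C_v$-parts varying in the compact projectivization of $F_v$), or the more careful selection of sections carried out in Amini--Baker's own proof — and this is precisely where the real content of the cited theorem lies; your proposal does not supply it.
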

We end with the definition of smoothability of limit linear series on metrized complexes: 
\begin{defn}
A limit linear series $(\mathcal D,\mathcal H)$ of degree $d$ and rank $r$ on a metrized complex $\mathfrak C$ over $\kappa$ is called \textbf{smoothable} if there exists a smooth proper curve $X$ over $\widetilde K$ such that $\mathfrak C=\mathfrak C\mathfrak X$ for some strongly semistable model $\mathfrak X$ of $X$ and $(\mathcal D,\mathcal H)$ arises the specialization of a limit $\mathfrak g^r_d$ on $X$.
\end{defn}

\section{The weak glueing condition} 
In this section we introduce the concept of pre-limit linear series on a curve $X_0$ of pseudocompact type with given chain structure $\bm n$ (on the dual graph), and a forgetful map from the space of pre-limit linear series on $(X_0,\bm n)$  to the set of limit linear series on $\mathfrak C_{X_0,\bm n}$, derived from \cite{osserman2017limit}.
We show that this map is bijective onto the set of $(\mathcal D,\mathcal H)$ with integral $\mathcal D$. By this we can extend the weak glueing condition on curves, which will be defined in the beginning of this section, to metrized complexes, and transfer the smoothing problems on metrized complexes to those on curves. To be consistent with Section 2.3 we assume in this section that $\kappa$ is algebraically closed. We use the following notation:
\begin{nota}\label{notation}
Let $X_0$ be a curve over $\kappa$ of pseudocompact type, $G$ the dual graph of $X_0$ and $\overline G$ the tree obtained from $G$ as in \S 1.1.  
Let $\bm n$ be a chain structure on $G$ and $\Gamma$ the induced metric graph. Take $w_0$ an admissible multidegree of total degree $d$ on $(G,\bm n)$. 
  \ Choose also a tight tuple $(w_v)_{v\in V(G)}$ of admissible multidegrees in $V(G(w_0))$.
 \ Let $d_v$ be the coefficient of $D_{w_v}$ at $v$. Let $\widetilde X_0$ be the nodal curve corresponding to $\bm n$ as constructed in Section 2.2 (after Definition \ref{enriched structure}) and $\widetilde G$ the dual graph of $\widetilde X_0$. Let $\mathfrak C_{X_0,\bm n}$ and $\mathfrak C_{\widetilde X_0}$ be the induced metrized complexes as in Example \ref{curve complex}. Given an enriched structure on $\widetilde X_0$, for a line bundle $\mathscr L$ on $\widetilde X_0$ recall that $\mathscr L^v=\mathscr L_{w_v}|_{Z_v}$ is constructed as in Proposition \ref{glueing isomorphism}.
\end{nota}
 
With the notation above, we start from defining pre-limit linear series on $(X_0,\bm n)$ and the weak glueing condition.

\begin{defn}\label{pre-limit linear series} 
A \textbf{pre-limit linear series} of rank $r$ with respect to the admissible multidegrees $(w_v)_v$ on $(X_0,\bm n)$ is a tuple $(\mathscr L_v, V_v)_{v\in V(G)}$ of line bundles $\mathscr L_v$ on $Z_v$ of degree $d_v$ and linear spaces $V_v\subset H^0(Z_v,\mathscr L_v)$ of dimension $r+1$ such that the multivanishing sequence $a_\bullet^{e,v}$ of $V_v$ along the twisting divisors $D_\bullet ^{e,v}$ satisfies condition (I) of Definition \ref{limit linear series on curves}. Given an enriched structure, we say a pre-limit linear series \textbf{lifts to a limit linear series} if there exists a line bundle $\mathscr L$ on $\widetilde X_0$ of multidegree $w_0$ such that $\mathscr L_v=\mathscr L^{v}$, and the tuple $(\mathscr L, (V_v)_v)$ is a limit linear series as in Definition \ref{limit linear series on curves}. 
\end{defn}
 
\begin{defn}\label{weak glueing condition}
We say that a pre-limit linear series $(\mathscr L_v, V_v)_{v\in V(G)}$ satisfies the \textbf{weak glueing condition} if for all $e\in \overline G$, let $v$ and $v'$ be the vertices adjacent to $e$, then for all critical $j$ with respect to $D_\bullet ^{e,v}$, there exists subspaces $$W_v\subset V_v(-D_j^{e,v})/V_v(-D_{j+1}^{e,v})\mathrm{\ and\ }W_{v'}\subset V_{v'}(-D_{b_{v,v'}-j}^{e,v'})/V_{v'}(-D_{b_{v,v'}-j+1}^{e,v'})$$ both of which have dimension $g_j$, where $g_j$ is defined in Remark \ref{def of lls}, such that for all torus orbits $T_v\subset \mathscr L_v(-D_j^{e,v})/\mathscr L_v(-D_{j+1}^{e,v})$ and $T_{v'}=\varphi_j^{e,v}(T_v)\subset \mathscr L_{v'}(-D_{b_{v,v'}-j}^{e,v'})/\mathscr L_{v'}(-D_{b_{v,v'}-j+1}^{e,v'})$, where $\varphi_j^{e,v}$ as in Proposition \ref{glueing isomorphism}, we have $\dim(T_v\cap W_v)=\dim(T_{v'}\cap W_{v'}).$
\end{defn}
 
In the above definition, suppose $D_{j+1}^{e,v}- D_{j}^{e,v}=P_{e_1}^v+\cdots+P_{e_m}^v$ where $e_i\in E(G)$ lies over $e$. We have an isomorphism $$\psi\colon H^0(Z_v,\mathscr L_v(-D_j^{e,v})/\mathscr L_v(-D_{j+1}^{e,v}))\rightarrow \kappa^m$$ given by $\psi(s)=(s(P_{e_1}^v),...,s(P_{e_m}^v))$ which is unique up to coordinate-wise scaling. This induces an action of the $m$-dimensional torus on $\mathscr L_v(-D_j^{e,v})/\mathscr L_v(-D_{j+1}^{e,v})$ which is independent of $\psi$. Similarly we have a torus action on $\mathscr L_{v'}(-D_{b_{v,v'}-j}^{e,v'})/\mathscr L_{v'}(-D_{b_{v,v'}-j+1}^{e,v'})$ and $\varphi_j^{e,v}$ respects torus orbits.

\begin{rem}\label{weak glueing remark}
In the above definition it is enough to check that for all torus orbits $T_v$ we have $T_v\cap W_v$ is nonempty if and only if $T_{v'}\cap W_{v'}$ is nonempty: let $T_v=\{(x_1,...,x_m)|x_i\neq 0\ \mathrm{if\ and \ only\ if\ }i\in I\}$ for some $I\subset \{1,...,m\}$ and $\dim(T_v\cap W_v)=l$. For $s=(a_1,...,a_m)\in \mathscr L_v(-D_j^{e,v})/\mathscr L_v(-D_{j+1}^{e,v})$ denote $I_s=\{i|a_i\neq 0\}$. We can find $s_1,...,s_l\in T_v\cap W_v$ that are linearly independent. Taking linear combinations of $\{s_i\}$ gives $t_1,...,t_l\in W_v$ such that $I_{t_1}\subsetneq I_{t_2}\subsetneq\cdots\subsetneq I_{t_l}= I$. Now let $T_{v'}^i= \{(x_1',...,x_m')|x_i'\neq 0\ \mathrm{if\ and \ only\ if\ }i\in I_{t_i}\}\subset \overline T_{v'}$. We have that $T_{v'}^l=T_{v'}$ and $W_{v'}\cap T_{v'}^i\neq \emptyset$ by assumption, which implies that $\dim(T_{v'}\cap W_{v'})\geq l=\dim(T_{v}\cap W_{v})$ since $W_{v'}$ is a $\kappa$-vector space. Same argument shows that $\dim(T_{v}\cap W_{v})\geq\dim(T_{v'}\cap W_{v'})$, hence $\dim(T_{v}\cap W_{v})=\dim(T_{v'}\cap W_{v'})$.
\end{rem}  

\begin{ex}
Note that $g_j\leq \deg D_{j+1}^{e,v}- \deg D_{j}^{e,v}$. If $g_j= \deg D_{j+1}^{e,v}- \deg D_{j}^{e,v}$ then the weak glueing condition is trivial, since in this case we have $$W_v=\mathscr L_v(-D_j^{e,v})/\mathscr L_v(-D_{j+1}^{e,v})\mathrm{\ and\ } W_{v'}=\mathscr L_{v'}(-D_{b_{v,v'}-j}^{e,v'})/\mathscr L_{v'}(-D_{b_{v,v'}-j+1}^{e,v'}).$$
On the other hand, if $g_j=1$ then 
the weak glueing condition is equivalent to the existence of $s_v\in
 V_v(-D_j^{e,v})/V_v(-D_{j+1}^{e,v})$ and $s_{v'}\in V_{v'}(-D_{b_{v,v'}-j}^{e,v'})/V_{v'}(-D_{b_{v,v'}-j+1}^{e,v'})$ such that 
$s_v$ vanishes at $P_v^{\tilde e}$ if and only if $s_{v'}$ vanishes at $P_{v'}^{\tilde e}$, where $P_v^{\tilde e}$ runs over the support of $D_{j+1}^{e,v}-D_j^{e,v}.$
\end{ex}

See also Example \ref{example} for a concrete case of a pre-limit linear series (as well as a limit linear series on $\mathfrak C_{X_0,\bm n}$) that does not satisfy the weak glueing condition.

\begin{prop}\label{lls weak}
A pre-limit linear series that lifts to a limit linear series must satisfy the weak glueing condition.
\end{prop}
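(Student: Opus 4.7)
The plan is to unpack condition (II) of Definition \ref{limit linear series on curves} via Remark \ref{def of lls} to produce subspaces $W_v, W_{v'}$ that satisfy the weak glueing condition essentially for free, using the fact that $\varphi_j^{e,v}$ is a linear isomorphism that respects torus orbits.

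First I would fix an edge $e \in E(\overline G)$ with endpoints $v, v'$ and a critical index $j$ for $D_\bullet^{e,v}$, and identify
$$\mathscr L_v(-D_j^{e,v})/\mathscr L_v(-D_{j+1}^{e,v}) \;\cong\; \mathscr L_{v'}(-D_{b_{v,v'}-j}^{e,v'})/\mathscr L_{v'}(-D_{b_{v,v'}-j+1}^{e,v'})$$
via $\varphi_j^{e,v}$ of Proposition \ref{glueing isomorphism}. Since $(\mathscr L,(V_v)_v)$ is a limit linear series lifting the given pre-limit linear series, Remark \ref{def of lls} applied to the pair $(e,j)$ tells us that the intersection
$$U \;:=\; \bigl(V_v(-D_j^{e,v})/V_v(-D_{j+1}^{e,v})\bigr)\cap \bigl(V_{v'}(-D_{b_{v,v'}-j}^{e,v'})/V_{v'}(-D_{b_{v,v'}-j+1}^{e,v'})\bigr)$$
taken inside this common quotient has dimension at least $g_j$.

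I would then choose $W_v$ to be any $g_j$-dimensional subspace of $U$, considered as a subspace of $V_v(-D_j^{e,v})/V_v(-D_{j+1}^{e,v})$, and set $W_{v'} := \varphi_j^{e,v}(W_v)$. By construction $W_{v'}$ is a $g_j$-dimensional subspace of $V_{v'}(-D_{b_{v,v'}-j}^{e,v'})/V_{v'}(-D_{b_{v,v'}-j+1}^{e,v'})$, so both candidate subspaces required by Definition \ref{weak glueing condition} are in place.

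For the torus orbit compatibility, recall from the discussion following Definition \ref{weak glueing condition} that $\varphi_j^{e,v}$ maps torus orbits to torus orbits. Given any torus orbit $T_v \subset \mathscr L_v(-D_j^{e,v})/\mathscr L_v(-D_{j+1}^{e,v})$ and $T_{v'} = \varphi_j^{e,v}(T_v)$, the $\kappa$-linearity of $\varphi_j^{e,v}$ yields $T_{v'} \cap W_{v'} = \varphi_j^{e,v}(T_v \cap W_v)$, and since $\varphi_j^{e,v}$ is an isomorphism this gives $\dim(T_v \cap W_v) = \dim(T_{v'} \cap W_{v'})$, as required. I do not expect any significant obstacle in this argument; the only point one has to be careful with is the identification of the two quotient spaces via $\varphi_j^{e,v}$ before forming $U$, so that $W_v$ and $W_{v'}=\varphi_j^{e,v}(W_v)$ sit in the correct ambient spaces on each side.
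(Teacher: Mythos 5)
Your proof is correct and is exactly the argument the paper has in mind: the paper's proof is just the one-line remark that the statement ``follows directly from Remark \ref{def of lls}'', and your construction of $W_v\subset U$ and $W_{v'}=\varphi_j^{e,v}(W_v)$ is precisely the unpacking of that remark, with the torus-orbit compatibility being automatic since $\varphi_j^{e,v}$ is an isomorphism respecting orbits.
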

\begin{proof}
Follows directly from Remark \ref{def of lls}.
\end{proof}



Let $P_d^r(X_0,\bm n,(w_v)_v)$ be the space\footnote{Here is a description of $P^r_d(X_0,\bm n,(w_v)_v)$: take non-negative sequences $a_\bullet ^{e,v}$ for each adjacent pair $(e,v)$ in $\overline G$ satisfying condition (I) of Definition \ref{limit linear series on curves}, let $G^r_d(Z_v,(D_\bullet^{e,v},a_\bullet^{e,v})_{e})$ be the space of $\mathfrak g^r_{d_v}$s on $Z_v$ having multivanishing sequence at least $a_\bullet^{e,v}$ along $D_\bullet^{e,v}$ for all $e$ incident on $v$, then $P^r_d(X_0,\bm n,(w_v)_v)$ is the union over all such $(a_\bullet^{e,v})_{e,v}$ of $$\prod_{v\in V(G)} G^r_{d_v}(Z_v,(D_\bullet^{e,v},a_\bullet^{e,v})_{e})$$ in $\prod_{v\in V(G)}G^r_{d_v}(Z_v)$ where $G^r_{d_v}(Z_v)$ is the space of limit $\mathfrak g^r_{d_v}$s on $Z_v$.}
 of pre-limit $\mathfrak g^r_d$s with respect to $(w_v)_v$ on $(X_0,\bm n)$, and $\mathfrak G^r_d(\mathfrak C_{X_0,\bm n})$ be the set of limit $\mathfrak g^r_d$s on $\mathfrak C_{X_0,\bm n}$. Given $w,w'\in V(G(w_0))$, we denote $D_{w,w'}^v$ the divisor on $Z_v$ obtained by twisting from $w$ to $w'$. More precisely, if $D_{w'}=D_w+\mathrm{div}(f)$ where $f$ is a piecewise linear function on $\Gamma$ then $D_{w,w'}^v=\sum\mathrm{slp}_{e,v}(f)P_e^v$ where the summation is taking over all edges in $G$ that is incident on $v$. In other words, if $P(w,v_1,...,v_m)=(w_1,\mu_1),...,(w_{m+1},\mu_{m+1})$ is a path in $G(w_0)$ from $w$ to $w'$, let $S\subset \{1,...,m\}$ consists of $i$ such that $v_i$ $(\neq v)$ is adjacent to $v$ and for $i\in S$ let $e_i\in E(\overline G)$ be the edge connecting $v$ and $v_i$, then (\cite[Notation 4.7]{osserman2017limit})
$$D_{w,w'}^v=\sum_{i\in S}\ \sum_{\begin{tiny}\begin{array}{cc}\tilde e\in E(G)\mathrm{\ over\ } e_i\\ \mu_{i+1}(\tilde e)=0\end{array}\end{tiny}}P_{\tilde e}^{v}-\sum_{i\colon v_i=v}\ \sum_{\begin{tiny}\begin{array}{cc}\tilde e\in E(G) \mathrm{\ incident\ on\ }v\\ \mu_i(\tilde e)=0\end{array}\end{tiny} }P_{\tilde e}^v.$$

\begin{defn}\label{forgetful map}
We define a map $\mathfrak F_{(w_v)_v}\colon P^r_d(X_0,\bm n,(w_v)_v)\rightarrow \mathfrak G^r_d(\mathfrak C_{X_0,\bm n})$ as follows: Given a tuple $(\mathscr L_v, V_v)_v\in P^r_d(X_0,\bm n,(w_v)_v) $, for each $v\in V(G)$, choose nonzero $s_v\in V_v$. Fix $w\in V(G(w_0))$. Take $\mathcal D\in \mathrm{Div}(\mathfrak C_{X_0,\bm n})$ of multidegree $w$ such that $\mathcal D_v=\mathrm{div}^0(s_v)-D^v_{w,w_v}$ for all $v$ and a collection $\mathcal H=(H_v)_v$ of rational functions with $H_v=\{\frac{s}{s_v}\colon s\in V_v\}$.
 Then we define $\mathfrak F_{(w_v)_v}((\mathscr L_v, V_v))=(\mathcal D,\mathcal H)$. We sometimes write $\mathfrak F$ instead of $\mathfrak F_{(w_v)_v}$ when $(w_v)_v$ is specified.
\end{defn}

Note that by \cite[Propostion 4.12]{osserman2017limit} and \cite[Propostion 5.9]{osserman2017limit} the map $\mathfrak F_{(w_v)_v}$ is well defined, i.e. $\mathfrak F_{(w_v)_v}((\mathscr L_v, V_v)_v)$ is a limit $\mathfrak g^r_d$ on $\mathfrak C_{X_0,\bm n}$ for all $(\mathscr L_v, V_v)_v\in P^r_d(X_0,\bm n,(w_v)_v)$, and it is independent of choices of $s_v$ and $w$. We next show that $\mathfrak F_{(w_v)_v}$ is a bijection onto the set of limit $\mathfrak g^r_d$s on $\mathfrak C_{X_0,\bm n}$ of multidegree $w_0$.
 


Given $(\mathcal D,\mathcal H)$ a limit $\mathfrak g^r_d$ on $\mathfrak C_{X_0,\bm n}$ with $\mathcal D
_\Gamma=D_{{w_0}}$. Recall that $w_v^{\mathrm{red}}\in V(G(w_0))$ is the admissible multidegree such that $D_{w_v^{\mathrm{red}}}$ is $v$-reduced on $\Gamma$. We have that $(w_v^{\mathrm{red}})_v$ is a tight tuple by Proposition \ref{reduced concentrated}. Let $D_v^{\mathrm{red}}=D_{w_0,w_v^{\mathrm{red}}}^v$. We have:
 
\begin{lem}\label{reduced multidegree}
For any $v\in V(G)$ we have $H_v\subset H^0(Z_v,\mathscr O_{Z_v}(\mathcal D_v+D_v^{\mathrm{red}}))$.
\end{lem}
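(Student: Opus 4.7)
My plan is to pass to the equivalent representative of $(\mathcal{D}, \mathcal{H})$ whose $\Gamma$-part is $v$-reduced, and then to exploit the rank condition $r_{\mathfrak{C}, \mathcal{H}}(\mathcal{D}) = r$ with effective divisors $\mathcal{E}$ of degree $r$ supported on smooth points of $Z_v$. The outcome is that a Zariski-dense set of lines in $\mathbb{P}(H_v)$ lies in $\mathbb{P}(H_v \cap H^0(Z_v, \mathscr{O}_{Z_v}(\mathcal{D}_v + D_v^{\mathrm{red}})))$, forcing the latter closed linear subspace to equal all of $H_v$.

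For the change of representative, I would write $D_{w_v^{\mathrm{red}}} - D_{w_0} = \mathrm{div}(g)$ for a piecewise linear function $g$ on $\Gamma$, and form $\mathfrak{g} \in \mathrm{Rat}(\mathfrak{C}_{X_0, \bm n})$ with $\mathfrak{g}_\Gamma = g$ and $\mathfrak{g}_{v'} = 1$ for every $v' \in V(G)$. Replacing $\mathcal{D}$ by $\mathcal{D}' := \mathcal{D} + \mathrm{div}(\mathfrak{g})$ does not affect the equivalence class of $(\mathcal{D}, \mathcal{H})$ or any $H_{v'}$; since $\mathrm{ord}_{P_e^v}(\mathfrak{g}) = \mathrm{slp}_{e,v}(g)$ and $\mathrm{ord}_P(\mathfrak{g}) = 0$ for $P \in Z_v \setminus A_v$, one has $\mathcal{D}'_\Gamma = D_{w_v^{\mathrm{red}}}$ and $\mathcal{D}'_v = \mathcal{D}_v + D_v^{\mathrm{red}}$. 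Thus the lemma reduces to showing $H_v \subset H^0(Z_v, \mathscr{O}_{Z_v}(\mathcal{D}'_v))$ under the additional hypothesis that $\mathcal{D}'_\Gamma$ is $v$-reduced.

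Now, for a generic $(P_1, \ldots, P_r) \in (Z_v \setminus A_v)^r$ I would set $\mathcal{E} = \sum_i (P_i)$ and apply the rank condition to get $\mathfrak{h} \in \mathrm{Rat}(\mathfrak{C}_{X_0, \bm n})$ with $\mathfrak{h}_{v'} \in H_{v'}$ for every $v'$ and $\mathcal{D}' + \mathrm{div}(\mathfrak{h}) - \mathcal{E} \geq 0$. Restricting to $\Gamma$ yields $D_{w_v^{\mathrm{red}}} - r(v) + \mathrm{div}(\mathfrak{h}_\Gamma) \geq 0$; the conditions defining $v$-reducedness involve only points of $V(G) \setminus \{v\}$, so $D_{w_v^{\mathrm{red}}} - r(v)$ is still $v$-reduced, and the uniqueness of the $v$-reduced representative in a divisor class (equivalently, Dhar's burning algorithm) forces $\mathfrak{h}_\Gamma$ to attain its maximum at $v$. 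Consequently $\mathrm{slp}_{e,v}(\mathfrak{h}_\Gamma) \leq 0$ for every edge $e$ incident on $v$. Restricting the inequality to $Z_v$ and using the formula $\mathrm{ord}_{P_e^v}(\mathfrak{h}) = \mathrm{ord}_{P_e^v}(\mathfrak{h}_v) + \mathrm{slp}_{e,v}(\mathfrak{h}_\Gamma)$ at nodes together with $\mathrm{ord}_P(\mathfrak{h}) = \mathrm{ord}_P(\mathfrak{h}_v)$ away from $A_v$, I would conclude $\mathrm{div}(\mathfrak{h}_v) + \mathcal{D}'_v \geq 0$, so $\mathfrak{h}_v \in H_v \cap H^0(Z_v, \mathscr{O}_{Z_v}(\mathcal{D}'_v))$.

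To finish, observe that for generic $(P_1, \ldots, P_r)$ the subspace $H_v(-\sum_i P_i)$ is one-dimensional and hence spanned by $\mathfrak{h}_v$; since the $(r+1)$-dimensional $H_v \subset K(Z_v)$ defines a nondegenerate linear system on $Z_v$ (any linear relation among a basis of $H_v$ would vanish identically), the induced rational map $Z_v^r \dashrightarrow \mathbb{P}(H_v)$ is dominant. Therefore the closed linear subspace $H_v \cap H^0(Z_v, \mathscr{O}_{Z_v}(\mathcal{D}'_v))$ of $H_v$ contains a Zariski-dense set of lines, so it coincides with $H_v$, completing the proof. I expect the main obstacle to be the passage from $v$-reducedness of $D_{w_v^{\mathrm{red}}} - r(v)$ to the slope bound on $\mathfrak{h}_\Gamma$ at $v$; once this is in hand, the remainder is bookkeeping via the order formula at nodes combined with the standard density statement for linear systems on smooth curves.
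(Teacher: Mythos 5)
Your proposal is correct and follows essentially the same route as the paper: pass to the $v$-reduced representative via a rational function on $\mathfrak C_{X_0,\bm n}$ with constant curve parts, apply the rank condition to $r$ general points of $Z_v\setminus\mathcal A_v$, show $\mathrm{slp}_{e,v}(\mathfrak h_\Gamma)\leq 0$ at $v$, and conclude $H_v\subset H^0(Z_v,\mathscr O_{Z_v}(\mathcal D_v+D_v^{\mathrm{red}}))$ by genericity (your dominance-of-lines argument is a harmless variant of the paper's dimension count). The step you flag as the main obstacle is precisely where the paper does its real work — it proves the slope bound by a Dhar-type chase along the tree $\overline G$ using that the $v$-reduced divisor has coefficient at most $1$ on edge interiors — and note that mere uniqueness of the reduced representative does not yield it; what closes the gap is the standard maximum principle for reduced divisors (if $D$ is $v$-reduced and $D+\mathrm{div}(f)\geq 0$ then $f$ attains its maximum at $v$), applied as you do to $D_{w_v^{\mathrm{red}}}-r(v)$.
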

\begin{proof}
Take $v_0\in V(G)$ and a rational function $f$ on $\Gamma$ such that $D_{w_{v_0}^{\mathrm{red}}}=D_{w_0}+\mathrm{div}(f)$. Take rational function $\mathfrak f$ on $\mathfrak C_{X_0,\bm n}$ such that $\mathfrak f_\Gamma=f$ and $\mathfrak f_v=1$ for all $v\in V(G)$. Let $\mathcal D'=\mathcal D+\mathrm{div}(\mathfrak f)$. We then have $\mathcal D'_{v_0}=\mathcal D_{v_0}+D_{v_0}^{\mathrm{red}}$ and $(\mathcal D',\mathcal H)$ is also a limit $\mathfrak g^r_d$, and $\mathcal D'_\Gamma$ is the $v_0$ reduced divisor on $\Gamma$ that is linearly equivalent to $\mathcal D_\Gamma$.

Take $r$ general points $P_1,...,P_r\in Z_{v_0}$, there is a $\mathfrak h\in \mathrm{Rat}(\mathfrak C_{X_0,\bm n})$ such that $\mathcal D'+\mathrm{div}(\mathfrak h)-P_1-\cdots-P_r\geq 0$ and that $\mathfrak h_{v}\in H_{v}$ for all $v$. It follows that $\mathcal D'_\Gamma +\mathrm{div}(\mathfrak h_\Gamma)-rv_0$ is an effective divisor on $\Gamma$. We denote this divisor by $D$ and hence $\mathcal D'_\Gamma=D+\mathrm{div}(-\mathfrak h_\Gamma)+rv_0$. We claim that $\mathrm{slp}_{e,v_0}(\mathfrak h_\Gamma)\leq 0$ for all $e\in E(G)$ incident on $v_0$. 

Suppose there is a $e_0\in E(\overline G)$ incident on $v_0$ and $\tilde e_0\in E(G)$ over $e_0$ such that $\mathrm{slp}_{\tilde e_0,v_0}(\mathfrak h_\Gamma)>0$ (or equivalently $\mathrm{slp}_{\tilde e_0,v_0}(-\mathfrak h_\Gamma)<0$). Let $v_1$ be the other vertex of $e_0$. Since $\mathcal D'_\Gamma$ is $v_0$ reduced, for all $x\in \Gamma\backslash V(G)$ we have $\mathrm{ord}_x(-\mathfrak h_\Gamma)\leq 1$. Therefore for all $\tilde e\in E(G)$ over $e_0$ either $\mathrm{slp}_{\tilde e,v_1}(-\mathfrak h_\Gamma)>0$ or there is a point $x\in \bar e^\circ$ such that $\mathrm{ord}_{x}(-\mathfrak h_\Gamma)>0$, where $\bar e\in E(\Gamma)$ is the edge induced by $\tilde e$. Now the Dhar's algorithm (\cite{luo2011rank}) implies that there must exist an edge $e_1\in E(\overline G)\backslash \{e_0\}$ incident on $v_1$ and $\tilde e_1\in E(G)$ over $e_1$ such that $\mathrm{slp}_{\tilde e_1,v_1}(-\mathfrak h_\Gamma)<0$. Inductively we end up with a sequence of distinct vertices $v_0,v_1,...$ and edges $\tilde e_1,\tilde e_2,...\subset E(G)$ over $e_1, e_2,...\subset E(\overline G)$ respectively such that $e_i$ is incident on $v_{i}$ and $v_{i+1}$ and that $\mathrm{slp}_{\tilde e_i,v_i}(-\mathfrak h_\Gamma)<0$. This sequence must be finite since $\overline G$ is a tree, which provides a contradiction.

It follows that $\mathcal D'_{v_0}+\mathrm{div}(\mathfrak h_{v_0})-P_1-\cdots-P_r\geq 0$. Now the generality of $P_i$ implies that $H^0(Z_{v_0},\mathscr O_{Z_{v_0}}(\mathcal D'_{v_0}))\cap H_{v_0}$ has dimension at least $r+1$, hence we have $H_{v_0}\subset H^0(Z_{v_0},\mathscr O_{Z_{v_0}}(\mathcal D'_{v_0}))
\\=H^0(Z_{v_0},\mathscr O_{Z_{v_0}}(\mathcal D_{v_0}+D_{v_0}^{\mathrm{red}}))$.
\end{proof}

Let $\mathfrak G^r_d(\mathfrak C_{X_0,\bm n}, w_0)$ be the set of limit $\frak g^r_d$s of multidegree $w_0$. We first show the bijectivity of $\mathfrak F$ for the tuple $(w_v^{\mathrm{red}})_v$:
\begin{thm}\label{lifting theorem}
The map $\mathfrak F_{(w_v^{\mathrm{red}})_v}$ is a bijection onto $\mathfrak G^r_d(\mathfrak C_{X_0,\bm n}, w_0)$.
\end{thm}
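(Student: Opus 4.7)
The plan is to construct an explicit two-sided inverse $\mathfrak G$ to $\mathfrak F_{(w_v^{\mathrm{red}})_v}$, with Lemma~\ref{reduced multidegree} supplying the key input on each component. Given $(\mathcal D,\mathcal H)\in\mathfrak G^r_d(\mathfrak C_{X_0,\bm n},w_0)$, I would set
\[
\mathscr L_v := \mathscr O_{Z_v}(\mathcal D_v + D_v^{\mathrm{red}}), \qquad V_v := H_v,
\]
where $V_v$ is viewed inside $H^0(Z_v,\mathscr L_v)$ via the inclusion of Lemma~\ref{reduced multidegree}. A short bookkeeping using the description of $D^v_{w_0,w_v^{\mathrm{red}}}$ as the slopes at $v$ of a piecewise linear function interpolating between $w_0$ and $w_v^{\mathrm{red}}$ shows $\deg\mathscr L_v = d_v$, while $\dim V_v=r+1$ is immediate from the definition of a limit linear series on $\mathfrak C_{X_0,\bm n}$.

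Next I would verify that the tuple $(\mathscr L_v,V_v)_v$ is a pre-limit linear series with respect to $(w_v^{\mathrm{red}})_v$, i.e.\ the multivanishing sequences $a_\bullet^{e,v}$ of $V_v$ along $D_\bullet^{e,v}$ satisfy condition~(I) of Definition~\ref{limit linear series on curves}. For each edge $e\in E(\overline G)$ with endpoints $v,v'$ and each critical index $j$, the idea is to take $s\in V_v$ with $\mathrm{ord}_{D_\bullet^{e,v}}(s)=\deg D_j^{e,v}$, realise it as $\mathfrak f_v$ for a rational function $\mathfrak f$ on $\mathfrak C_{X_0,\bm n}$ with specified slopes on $\Gamma$ across $e$, and then apply the $\mathcal H$-rank condition to suitable effective divisors of degree $r$ concentrated away from $v$ in order to force the complementary element of $V_{v'}$ to vanish along $D_\bullet^{e,v'}$ to order at least $\deg D_{b_{v,v'}-j}^{e,v'}$. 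This runs the well-definedness results \cite[Propositions~4.12 and 5.9]{osserman2017limit} in reverse.

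Finally I would check that the two compositions $\mathfrak F_{(w_v^{\mathrm{red}})_v}\circ\mathfrak G$ and $\mathfrak G\circ\mathfrak F_{(w_v^{\mathrm{red}})_v}$ are the identity by tracing the definitions. With reference multidegree taken to be $w_0$ itself, and $s_v\in V_v$ chosen to be the canonical section cutting out $\mathcal D_v + D_v^{\mathrm{red}}$, the formula $\mathcal D'_v=\mathrm{div}^0(s_v)-D^v_{w_0,w_v^{\mathrm{red}}}$ recovers $\mathcal D_v$ on every component, and $\{s/s_v:s\in V_v\}$ recovers $H_v$; the reverse composition is checked symmetrically. The main obstacle will be the multivanishing verification in step two: the reduced choice $(w_v^{\mathrm{red}})_v$ is essential because only for $v$-reduced representatives does Dhar's burning algorithm, used exactly as in the proof of Lemma~\ref{reduced multidegree}, let one control the slopes of $\mathfrak f_\Gamma$ along $\Gamma$ and isolate the contribution across the single edge $e$, preventing vanishing orders from leaking through other edges of $\overline G$.
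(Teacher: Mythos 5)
Your overall skeleton matches the paper's: use Lemma~\ref{reduced multidegree} to define the candidate preimage $(\mathscr O_{Z_v}(\mathcal D_v+D_v^{\mathrm{red}}),H_v)_v$, check it is a pre-limit linear series, and verify by direct computation that $\mathfrak F$ sends it back to $(\mathcal D,\mathcal H)$. But the heart of the theorem is precisely the step you leave as a sketch, and the sketch as stated does not work. You propose to ``take $s\in V_v$ with $\mathrm{ord}_{D_\bullet^{e,v}}(s)=\deg D_j^{e,v}$, realise it as $\mathfrak f_v$ for a rational function $\mathfrak f$ on $\mathfrak C_{X_0,\bm n}$ with specified slopes,'' and then invoke the rank condition. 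The definition of $r_{\mathfrak C,\mathcal H}(\mathcal D)=r$ gives you no such thing: for each effective divisor $\mathcal E$ of degree $r$ it produces \emph{some} $\mathfrak f$ with $\mathfrak f_v\in H_v$ and $\mathcal D+\mathrm{div}(\mathfrak f)-\mathcal E\geq 0$, but you cannot prescribe the $C_v$-part $\mathfrak f_v$ (nor the slopes of $\mathfrak f_\Gamma$) in advance, so ``running the well-definedness results in reverse'' is not an argument. The correct shape, as in the paper (following the method of Amini--Baker), is to choose $\mathcal E$ consisting of $r-l$ general points on $Z_v$ and $l$ general points on $Z_{v'}$ and let the rank condition produce $\mathfrak f$; this forces subspaces of $H_v$ and $H_{v'}$ of dimensions $r-l+1$ and $l+1$ vanishing along $D_v^{\mathrm{red}}+\sum_i\lambda_i^v P_{e_i}^v$ and the analogous divisor on $Z_{v'}$, where the $\lambda$'s are the slopes of $\mathfrak f_\Gamma$ at the two endpoints of the edges over $e$.

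Two genuine difficulties then remain, and your proposal addresses neither. First, the witnessing function $\mathfrak f$ depends on the chosen points, so one must show a single slope datum works for all configurations; the paper does this via the finiteness of the set $F$ of possible boundary-slope divisors together with a Zariski-closedness and covering argument on $Z_1^{(r-l)}\times Z_2^{(l)}$. Second, and most importantly, the slopes $\lambda_i^v,\lambda_i^{v'}$ obtained this way need not a priori sit at a common twist: one must prove there is a single index $a$ with $D_v^{\mathrm{red}}+\sum_i\lambda_i^vP_{e_i}^v\geq D_a^{e,v}$ and simultaneously $D_{v'}^{\mathrm{red}}+\sum_i\lambda_i^{v'}P_{e_i}^{v'}\geq D_{b_{v,v'}-a}^{e,v'}$, which is what makes the two multivanishing sequences pair up as in condition (I) of Definition~\ref{limit linear series on curves}. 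The paper establishes this by a convexity argument on each edge (effectivity of $(\mathcal D_\Gamma+\mathrm{div}(\mathfrak f_\Gamma))|_{e_i^\circ}$ forces $\mathfrak f_\Gamma|_{e_i}$, suitably corrected, to be convex) followed by explicit floor/ceiling estimates, culminating in the choice $a=\lambda-\lfloor y\rfloor$. Your appeal to Dhar's algorithm and the $v$-reduced representative controls only the material already contained in Lemma~\ref{reduced multidegree}; it does not substitute for this edge-by-edge analysis. Without these two ingredients the claimed inverse is not shown to land in $P^r_d(X_0,\bm n,(w_v^{\mathrm{red}})_v)$, so the proof is incomplete at its central point.
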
 
\begin{proof}
It is easy to check that $\mathfrak F$ is injective. We next show the surjectivity. Take $(\mathcal D,\mathcal H)\in \mathfrak G^r_d(\mathfrak C_{X_0,\bm n})$. According to Lemma \ref{reduced multidegree} we have $H_v\subset H^0(Z_v,\mathscr O_{Z_v}(\mathcal D_v+D_v^{\mathrm{red}}))$.
 We next show that $(\mathscr L_v,V_v)_v=(\mathcal O(\mathcal D_v+D^{\mathrm{red}}_v),H_v)_v$ is a pre-limit $\mathfrak g^r_d$ on $(X_0,\bm n)$ by extending the method of the proof of \cite[Theorem 5.4]{amini2015linear}. Note that afterwards straightforward calculation shows that $\mathfrak F((\mathscr L_v,V_v)_v)=(\mathcal D,\mathcal H)$. 

We may assume that $X_0$ is a binary curve, i.e. $|V(G)|=2$ and $\mathcal D_\Gamma=D_{w_0}$. Let $e_1,...,e_m$ be the edges of $G$ over $e\in E(\overline G)$ and $v_1, v_2$ be the vertices of $e$. For simplicity let $D_j=D_{v_j}^{\mathrm{red}},H_j=H_{v_j},Z_j=Z_{v_j},\mathscr L_j=\mathscr L_{v_j}$ and $w_j=w_{v_j}^{\mathrm{red}}$ for $j=1,2$. Denote $P^j_i=P_{e_i}^{v_j}$ the points on $Z_j$ corresponding to $e_i$. Suppose $G$ is directed by $v_1\rightarrow v_2$ and $w_0=(w_G,\mu)$. Denote $x_i=\mu (e_i)$ for $1\leq i\leq m$.

Suppose $w_1$ is obtained from $w_0$ by twisting $\lambda $ times at $(e,v_2)$ and $w_2$ be obtained from $w_0$ by twisting $b-\lambda$ times at $(e,v_1)$. Then $w_2$ is equal to $w_1$ twisting $b$ times at $(e,v_1)$. 
It remains to show that the condition about multivanishing sequences (Condition (I) of Definition \ref{limit linear series on curves}) is satisfied.

Let $a_0^{e,v_j},...,a_r^{e,v_j}$ be the multivanishing sequences of $H_j$ along $D_\bullet^{e,v_j}$ as in Definition \ref{limit linear series on curves}.
 For $l=0,1,...,r$ we claim that there is an $s$ such that $a^{e,v_1}_l\geq D^{e,v_1}_s$ and $a^{e,v_2}_{r-l}\geq D^{e,v_2}_{b-s}$. 

Take effective divisors $(E_1,E_2)=(\sum_{1\leq k\leq r-l}Q_k, \sum_{r-l+1\leq k\leq r}Q_k)\in (Z_1^{(r-l)},Z_2^{(l)})$ where $Z_j^{(l)}$ is the $l$-fold symmetric product.
Let 
$$\begin{array}{ll}F_{E_1,E_2}&=\{ \mathcal E\in \mathrm{Div}(\mathfrak C_{X_0,\bm n})\colon \mathrm{there\ exists\ a\ rational\ function\ }\mathfrak f\mathrm{\ on\ }\mathfrak C\mathrm{\ such\ that\ } \\&\mathfrak f_{v_j}\in H_j\mathrm{\ and\ }\mathcal E=\sum\sum \mathrm{slp}_{e_i,v_j} (\mathfrak f_\Gamma)P_i^j \mathrm{\ and\ }\mathrm{div}(\mathfrak  f)+\mathcal D-(E_1+E_2)\geq 0\}.\end{array}$$ Then the set 
$$F=\bigcup_{(E_1,E_2)}F_{E_1,E_2}$$
 is finite. For $D_0\in F$, let $S_{D_0}$ be the subset of $Z_1^{(r-l)}\times Z_2^{(l)}$ defined by all collections of effective divisors $(E_1,E_2)$ such that $D_0\in F_{E_1,E_2}$. We have:

(i) Each $S_{D_0}$ is Zariski closed in $Z_1^{(r-l)}\times Z_2^{(l)}$. 

(ii) $\bigcup_{D_0\in F} S_{D_0}=Z_1^{(r-l)}\times Z_2^{(l)}$. 

Therefore we can find a $D_0$ such that $S_{D_0}=Z_1^{(r-l)}\times Z_2^{(l)}$, and a rational function $\mathfrak f$ on $\mathfrak C$ such that $D_0=\sum\sum \mathrm{slp}_{e_i,v_j}(\mathfrak f_\Gamma)P_i^j$.

Now for any choices of $Q_1,...,Q_{r-l}\in Z_1$ and $Q_{r-l+1},...,Q_r\in Z_2$, setting $I_1=\{1,...,r-l\}$ and $I_2=\{r-l+1,...,r\}$ and $\lambda_i^j=-\mathrm{slp}_{e_i,v_j}(\mathfrak f_\Gamma)$ we have 
$$\mathcal D_j+D_j-(D_j+\sum_i \lambda_i^jP^j_i)+\mathrm{div}(\mathfrak f_{v_j})-\sum_{k\in I_j} Q_k\geq 0$$
for $j=1,2$ and 
\begin{equation}(\mathrm{div}(\mathfrak f_\Gamma)+\mathcal D_\Gamma)|_{\Gamma\backslash V(G)}\geq 0. \end{equation}

It follows that 
$$\dim(H_1\cap H^0(Z_1,\mathscr L_1(-D_1-\sum \lambda_i^1P^1_i))\geq r-l+1$$ and that $$ \dim(H_2\cap H^0(Z_2,\mathscr L_2(-D_2-\sum \lambda_i^2P^2_i))\geq l+1.$$
We claim that there is an $a$ such that $D_1+\sum \lambda_i^1P^1_i\geq D_a^{e,v_1}$ and $D_2+\sum\lambda_i^2P^2_i\geq D_{b-a}^{e,v_2}$, and hence $a_{l}^{e,v_1}\geq \deg D_a^{e,v_1}$ and $a_{r-l}^{e,v_2}\geq \deg D_{b-a}^{e,v_2}$ and the vanishing condition is satisfied.

Note that $D_1=D^{e,v_1}_\lambda$ and $D_2=D^{e,v_2}_{b-\lambda}$. Let $n_i=\bm n(e_i)$, let $F_i\colon [0,n_i]\rightarrow \mathbb R$ be the piecewise linear function such that $F_i=\mathfrak f_\Gamma|_{e_i}$. With out lost of generality assume $F_i(0)=0$ and $F_i(n_i)=y\geq 0$. We have $\lambda_i^1=-F_i'(0)$ and $\lambda_i^2=F'_i(n_i)$. Take $\widetilde F_i\colon [0,n_i]\rightarrow \mathbb R$ such that $\widetilde F_i(x)=0$ if $0\leq x\leq x_i$ and $\widetilde F_i(x)=x-x_i$ otherwise.
According to $(1)$, $F_i(x)$ is convex when $x_i=0$ and $F_i(x)+\widetilde F_i(x)$ is convex when $x_i\neq 0$. We then have $\lambda_i^1\geq-\lfloor \frac{y}{n_i}\rfloor$ and $\lambda_i^2\geq \lceil\frac{y}{n_i}\rceil$ when $x_i= 0$, and $\lambda^1_i\geq -\lfloor\frac{y+n_i-x_i}{n_i}\rfloor$ and $\lambda_i^2\geq\lceil\frac{y-x_i}{n_i}\rceil$ when $x_i\neq 0$.

Now it is enough to find $a$ such that:
$$\sum_{i\colon x_i=0}\lfloor \frac{y}{n_i}\rfloor P_i^1 +\sum_{i\colon x_i\neq 0}\lfloor \frac{y+n_i-x_i}{n_i}\rfloor P_i^1\leq D_\lambda^{e,v_1}-D_a^{e,v_1}=\sum_{k=a}^{\lambda-1}\ \ \sum_{i\colon x_i-\lambda+k\equiv 0(\mathrm{mod}n_i)}P^1_i$$ and that 
$$\sum\lceil \frac{y-x_i}{n_i}\rceil P_i^2\geq D_{b-a}^{e,v_2}-D_{b-\lambda}^{e,v_2}=\sum_{k=b-\lambda}^{b-a-1}\ \ \sum_{i\colon x_i-\lambda+b-k\equiv 0(\mathrm{mod}n_i)}P^2_i.$$

In other words we must have $$\lfloor \frac{y}{n_i}\rfloor\leq\sum_{\tiny{\begin{array}{cc}k\colon a\leq k\leq \lambda-1\\x_i-\lambda+k\equiv 0(\mathrm{mod}n_i)\end{array}}}1 \mathrm{\ \ and\ \  }\lceil \frac{y}{n_i}\rceil\geq\sum_{\tiny{\begin{array}{cc}k\colon b-\lambda\leq k\leq b-a-1\\x_i-\lambda+b-k\equiv 0(\mathrm{mod}n_i)\end{array}}} 1\mathrm{\ \ if\ \ } x_i=0$$
and
 $$\lfloor \frac{y+n_i-x_i}{n_i}\rfloor\leq\sum_{\tiny{\begin{array}{cc}k\colon a\leq k\leq \lambda-1\\x_i-\lambda+k\equiv 0(\mathrm{mod}n_i)\end{array}}}1 \mathrm{\ \ and\ \  }\lceil \frac{y-x_i}{n_i}\rceil\geq\sum_{\tiny{\begin{array}{cc}k\colon b-\lambda\leq k\leq b-a-1\\x_i-\lambda+b-k\equiv 0(\mathrm{mod}n_i)\end{array}}} 1\mathrm{\ \ if\ \ } x_i\neq 0.$$
It is straightforward to verify that $a=\lambda-\lfloor y\rfloor$ satisfies the inequalities above.
\end{proof}

 
We next show that there is an isomorphism between $P^r_d(X_0,\bm n,(w_v)_v)$ and $ P^r_d(X_0,\bm n,(w^{\mathrm{red}}_v)_v)$ that is compatible with $\mathfrak F_{(w_v)_v}$ and $\mathfrak F_{(w^{\mathrm{red}}_v)_v}$ for any tight tuple $(w_v)_v$, and the bijectivity of $\mathfrak F_{(w_v)_v}$ then follows.

\begin{cor}\label{pre lls isomorphism}
For any tight tuple $(w_v)_v$ of admissible multidegrees in $V(G(w_0))$ we have a natural isomorphism $\mathfrak I\colon P^r_d(X_0,\bm n,(w_v)_v)\rightarrow P^r_d(X_0,\bm n,(w^{\mathrm{red}}_v)_v)$ such that $\mathfrak F_{(w_v)_v}=\mathfrak F_{(w^{\mathrm{red}}_v)_v}\circ \mathfrak I$. Moreover, $\mathfrak I$ induces a bijection between the sets of pre-limit linear series that satisfy the weak glueing condition. 
\end{cor}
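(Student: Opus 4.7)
The plan is to define $\mathfrak{I}$ indirectly as the composition
\[
\mathfrak{I} \;:=\; \mathfrak{F}_{(w_v^{\mathrm{red}})_v}^{-1} \circ \mathfrak{F}_{(w_v)_v},
\]
leveraging Theorem~\ref{lifting theorem} to ensure the inverse is available. Concretely, choosing $w=w_0$ in Definition~\ref{forgetful map} places the image of $\mathfrak{F}_{(w_v)_v}$ inside $\mathfrak{G}^r_d(\mathfrak{C}_{X_0, \bm n}, w_0)$, which is precisely the set onto which $\mathfrak{F}_{(w_v^{\mathrm{red}})_v}$ is bijective. Thus $\mathfrak{I}$ is well-defined and the relation $\mathfrak{F}_{(w_v)_v} = \mathfrak{F}_{(w_v^{\mathrm{red}})_v} \circ \mathfrak{I}$ holds tautologically. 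The injectivity of $\mathfrak{I}$ reduces to that of $\mathfrak{F}_{(w_v)_v}$, which follows directly from the definition (as in the opening line of the proof of Theorem~\ref{lifting theorem}).

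The substantive step is the surjectivity of $\mathfrak{I}$, equivalently that $\mathfrak{F}_{(w_v)_v}$ surjects onto $\mathfrak{G}^r_d(\mathfrak{C}_{X_0, \bm n}, w_0)$. Given $(\mathcal{D}, \mathcal{H})$ in the target, I would parallel the construction in the proof of Theorem~\ref{lifting theorem}: set $\mathscr{L}_v := \mathcal{O}_{Z_v}(\mathcal{D}_v + D^v_{w_0, w_v})$ and $V_v := H_v \cdot s_v$ for a section $s_v$ with $\mathrm{div}^0(s_v) = \mathcal{D}_v + D^v_{w_0, w_v}$, and then verify that $(\mathscr{L}_v, V_v)_v$ is a pre-limit linear series with respect to $(w_v)_v$. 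The only non-trivial ingredient is the multivanishing condition~(I) along $D^{e,v}_\bullet$, which I would deduce from the known condition~(I) for $(\mathscr{L}'_v, V'_v)_v := \mathfrak{F}_{(w_v^{\mathrm{red}})_v}^{-1}(\mathcal{D}, \mathcal{H})$ along $(D')^{e,v}_\bullet$ via the pair of index-shift identities
\[
(D')^{e,v}_i \;=\; D^{e,v}_{i + c^e_v} - D^{e,v}_{c^e_v}, \qquad b^{\mathrm{red}}_{v,v'} \;=\; b_{v,v'} - c^e_v - c^e_{v'},
\]
where $c^e_v$ is the (signed) number of twists at $(e,v)$ taking $w_v$ to $w^{\mathrm{red}}_v$. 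The first is a direct calculation from the defining formula for $D^{e,v}_\bullet$ using $\sigma(e_j,v)\mu^{\mathrm{red}}_v(e_j) = \sigma(e_j,v)\mu_v(e_j) + c^e_v$, and the second follows from tightness. Under these shifts, the multivanishing sequence of $V'_v$ along $(D')^{e,v}_\bullet$ translates to that of $V_v$ along $D^{e,v}_\bullet$, and the pairing between the $v$- and $v'$-sides in condition~(I) is preserved because $b^{\mathrm{red}}_{v,v'} - (i - c^e_v) = b_{v,v'} - i - c^e_{v'}$.

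For the weak glueing condition, the same shift induces a natural torus-equivariant isomorphism
\[V_v(-D^{e,v}_j)/V_v(-D^{e,v}_{j+1}) \;\cong\; V'_v(-(D')^{e,v}_{j-c^e_v})/V'_v(-(D')^{e,v}_{j+1-c^e_v})\]
(and likewise on the $v'$-side), with the torus action preserved since the underlying support on $\mathcal{A}^v_e$ is unchanged. I would then check that the glueing isomorphisms $\varphi^{e,v}_j$ of Proposition~\ref{glueing isomorphism} intertwine these identifications on both sides of $e$: both come from comparing local trivializations at the nodes, and the shift simply reinterprets the same local data relative to a different line bundle. It follows that a witnessing tuple $(W_v, W_{v'})$ for the weak glueing condition on the $(\mathscr{L}_v, V_v)_v$ side corresponds to a witnessing tuple on the $(\mathscr{L}'_v, V'_v)_v$ side, and vice versa. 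The principal technical obstacle I anticipate is the careful bookkeeping of the index shifts together with verifying that the glueing isomorphisms genuinely intertwine the shift-based identifications between the two tight tuples.
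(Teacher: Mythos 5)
Your overall strategy coincides with the paper's: $\mathfrak I$ is defined there too as $\mathfrak F_{(w_v^{\mathrm{red}})_v}^{-1}\circ\mathfrak F_{(w_v)_v}$, and the transfer of condition (I) and of the weak glueing condition is likewise done by an index-shift comparison of the twisting divisors ($b_{v,v'}=b^{\mathrm{red}}_{v,v'}+a+a'$, with the multivanishing data shifted by $a'$ on the $v$-side and $a$ on the $v'$-side, and with $\mathrm{ord}^0_P(s)-\mathrm{ord}_P(D^{e,v}_\bullet)$ preserved at the nodes over $e$). The difference is mechanical: the paper exhibits an explicit inverse $\mathfrak L$, sending $(\mathscr L_v,V_v)_v$ for the reduced tuple to $(\mathscr L_v(D_v), i_v(V_v))_v$ with $D_v=D^v_{w_v^{\mathrm{red}},w_v}$, whereas you instead build the candidate preimage of $(\mathcal D,\mathcal H)$ under $\mathfrak F_{(w_v)_v}$ directly and compare it with the known preimage for the reduced tuple. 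These are essentially the same computation packaged differently.

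However, there is a genuine gap in your version: you never establish the sign of your shifts $c^e_v$, and calling them ``(signed)'' suggests you would let them be negative. The paper's proof opens with exactly the point you are missing: using concentratedness and the characterization of $w_v^{\mathrm{red}}$ as the unique concentrated multidegree nonnegative away from $v$, one shows that $w_v$ is obtained from $w_v^{\mathrm{red}}$ by twisting only at vertices of $V(G)\setminus\{v\}$, hence $D^v_{w_v^{\mathrm{red}},w_v}$ is effective and all shifts are nonnegative. Without this, your construction is not even well defined: to have $V_v:=H_v\cdot s_v\subset H^0(Z_v,\mathscr O_{Z_v}(\mathcal D_v+D^v_{w_0,w_v}))$ you need $\mathrm{div}(h)+\mathcal D_v+D^v_{w_0,w_v}\geq 0$ for $h\in H_v$, and Lemma \ref{reduced multidegree} only gives this with $D^v_{w_0,w_v^{\mathrm{red}}}$ in place of $D^v_{w_0,w_v}$; the passage from one to the other uses precisely the effectiveness of $D^v_{w_v^{\mathrm{red}},w_v}$. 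The same nonnegativity is needed for your identity $(D')^{e,v}_i=D^{e,v}_{i+c^e_v}-D^{e,v}_{c^e_v}$ to be meaningful (twisting divisors are only defined for nonnegative indices) and for the shifted range $[0,b^{\mathrm{red}}_{v,v'}+1]$ to land inside $[0,b_{v,v'}+1]$, so that condition (I) for the reduced tuple really controls condition (I) for $(w_v)_v$. Once you insert the concentration argument, the rest of your bookkeeping (including the translation of the weak glueing condition, which by Remark \ref{weak glueing remark} only requires matching vanishing patterns at the nodes, so no extra intertwining of $\varphi_j^{e,v}$ beyond what Proposition \ref{glueing isomorphism} provides is needed) goes through as in the paper.
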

\begin{proof}
Since $\mathfrak F_{(w^{\mathrm{red}}_v)_v}$ is a bijection, we define $\mathfrak I=\mathfrak F_{(w^{\mathrm{red}}_v)_v}^{-1}\circ \mathfrak F_{w_0,(w_v)_v}$. To show that this is an isomorphism, we construct an inverse $\mathfrak L$ of $\mathfrak I$. 

We first claim for all $v\in V(G)$ that $w_v$ is obtained from $w_v^{\mathrm{red}}$ by twisting vertices in $V(G)\backslash \{v\}$. Suppose there is a path $P(w_v^{\mathrm{red}},v_1,...,v_m)$ in $G(w_0)$ from $w_v^{\mathrm{red}}$ to $w_v$ such that $ \{v_i\}_{1\leq i\leq m}\subsetneq V(G)$.
Take an ordering $v=v_0',v_1',v_2',...$ of $V(G)$ such that $w_v$ becomes negative in $v_i'$ for $i\geq 1$ after taking the negative twists of $v_0',...,v_{i-1}'$. Let $j$ be the smallest number such that $v_j'\not\in \{v_i\}_{1\leq i\leq m}$. Then negatively twisting $w_v$ at $v_0',...,v_{j-1}'$  is the same as twisting $w_v^{\mathrm{red}}$ at vertices in $\{v_i\}_{1\leq i\leq m}\backslash \{v_0',...,v_{j-1}'\}$ (perhaps with multiplicities), which can not be negative at $v_j'$ unless $j=0$, namely $v_j=v$, since $w_v^{\mathrm{red}}$ is nonnegative at all vertices except $v$. So $v\not\in \{v_i\}_{1\leq i\leq m}$.

It follows that $D_v=D_{w_v^{{\mathrm{red}}},w_v}^v$ is effective for all $v$. Let $(\mathscr L_v,V_v)_v\in P^r_d(X_0,\bm n,(w^{\mathrm{red}}_v)_v)$. We define $\mathfrak L((\mathscr L_v,V_v)_v)=(\mathscr L_v',V_v')_v$ where $\mathscr L_v'=\mathscr L_v(D_v)$ and $V_v'$ is the image of $V_v$ under the imbedding $i_v\colon H^0(Z_v,\mathscr L_v) \rightarrow H^0(Z_v,\mathscr L_v')$. We next show that $(\mathscr L_v',V_v')_v$ is contained in $P^r_d(X_0,\bm n,(w_v)_v)$ and satisfies the weak glueing condition if and only if $(\mathscr L_v,V_v)_v$ does.

Take $e\in \overline G$ incident on $v$ and $v'$. Suppose $w_v^{\mathrm{red}}$ (resp. $w_v$) is obtained from $w_{v'}^{\mathrm{red}}$ (resp. $w_{v'}$) by twisting $b_{v,v'}$ (resp. $b_{v,v'}'$) times at $(e,v')$. Let $D_i^{e,v}$ (resp. $(D_i^{e,v})'$) be the twisting divisors associated to $(w_v^{\mathrm{red}})_v$ (resp. $(w_v)_v$). 
Let $(a_l^{e,v})_l$ (resp.$((a_l^{e,v})')_l$) be the multivanishing sequence of $V_v$ (resp. $V_v'$) along $D_\bullet^{e,v}$ (resp. $(D_\bullet^{e,v})'$). 

Let $v_1,...,v_m$ be as above. Take also a path $P(w_{v'}^{\mathrm{red}},v_1',...,v_{m'}')$ in $G(w_0)$ from $w_{v'}^{\mathrm{red}}$ to $w_{v'}$ such that $v'\not\in\{v_i'\}_{1\leq i\leq m'}$. Suppose $v$ appears $a$ times in $v_1',...,v_{m'}'$ and $v'$ appears $a'$ times in $v_1,...,v_m$. One easily verifies that $b'_{v_1,v_2}=b_{v_1,v_2}+a'+a$; and that if $a_l^{e,v}=\deg D_j^{e,v}$ for some $j$ critical, then $(a_l^{e,v})'=\deg ((D_{j+a'}^{e,v})')$ with $j+a'$ critical; and if $a_l^{e,v'}=\deg D_j^{e,v'}$ for some $j$ critical, then $(a_l^{e,v'})'=\deg ((D_{j+a}^{e,v'})')$ with $j+a$ critical. It then follows by definition that $(\mathscr L_v',V_v')_v$ is in $P^r_d(X_0,\bm n,(w_v)_v)$.

Moreover, for $s\in V_{v}$ and $s'=i_v(s)\in V_v'$, we have $\mathrm{ord}^0_{P}(s)-\mathrm{ord}_P({D_\bullet^{e,v}})=\mathrm{ord}^0_{P}(s')-\mathrm{ord}_P({(D^{e,v}_{\bullet+a'})'})$ for $P\in\mathcal A_e^v$, and the same holds if we replace $v$ by $v'$ and $a'$ by $a$. In particular, if $W_{v}\subset V_{v}(-D_j^{e,v})/V_{v}(-D_{j+1}^{e,v})$ and $W_{v'}\subset V_{v'}(-D_{b_{v,v'}-j}^{e,v'})/V_{v'}(-D_{b_{v,v'}-j+1}^{e,v'})$ as in Definition \ref{weak glueing condition} satisfies the weak glueing condition for $(w^{\mathrm{red}}_v)_v$ then we can take $W_{v}'=i_v(W_v)\subset V_v'$ and $W_{v'}'=i_{v'}(W_{v'})\subset V_{v'}'$ which fulfills the weak glueing condition for $(w_v)_v$, and vice versa. 
\end{proof}

\begin{cor}\label{isomorphism for tight tuple}
For any tight tuple $(w_v)_v$ in $V(G(w_0))$ the map $\mathfrak F_{(w_v)_v}\colon P^r_d(X_0,\bm n,(w_v)_v)\rightarrow \mathfrak G^r_d(\mathfrak C_{X_0,\bm n},w_0)$ is a bijection, the preimage of $(\mathcal D,\mathcal H)$ with $\mathcal D_\Gamma=D_{w_0}$ is $(\mathscr O_{Z_v}(\mathcal D_v+D^v_{w_0,w_v}),H_v)_v$. In particular we have $H_v\subset H^0(Z_v, \mathscr O_{Z_v}(\mathcal D_v+D^v_{w_0,w_v}))$.
\end{cor}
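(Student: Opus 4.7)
The plan is to derive this corollary formally from Theorem \ref{lifting theorem} and Corollary \ref{pre lls isomorphism}. Corollary \ref{pre lls isomorphism} supplies an isomorphism
$$\mathfrak I\colon P^r_d(X_0,\bm n,(w_v)_v)\longrightarrow P^r_d(X_0,\bm n,(w^{\mathrm{red}}_v)_v)$$
intertwining the two forgetful maps via $\mathfrak F_{(w_v)_v}=\mathfrak F_{(w^{\mathrm{red}}_v)_v}\circ \mathfrak I$. Since Theorem \ref{lifting theorem} gives that $\mathfrak F_{(w^{\mathrm{red}}_v)_v}$ is a bijection onto $\mathfrak G^r_d(\mathfrak C_{X_0,\bm n},w_0)$, the bijectivity of $\mathfrak F_{(w_v)_v}$ onto the same target follows at once.

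For the explicit preimage, I would take $(\mathcal D,\mathcal H)\in \mathfrak G^r_d(\mathfrak C_{X_0,\bm n},w_0)$ with $\mathcal D_\Gamma=D_{w_0}$ and chase through the two inverses. By the proof of Theorem \ref{lifting theorem},
$$\mathfrak F_{(w^{\mathrm{red}}_v)_v}^{-1}(\mathcal D,\mathcal H)=\bigl(\mathscr O_{Z_v}(\mathcal D_v+D^{\mathrm{red}}_v),\,H_v\bigr)_v,$$
where $D^{\mathrm{red}}_v=D^v_{w_0,w^{\mathrm{red}}_v}$. Applying the inverse $\mathfrak L=\mathfrak I^{-1}$ described in the proof of Corollary \ref{pre lls isomorphism}, which tensors the line bundles by $\mathscr O_{Z_v}(D^v_{w^{\mathrm{red}}_v,w_v})$ and views each $H_v$ inside the larger space of global sections via the natural embedding, produces the tuple $(\mathscr O_{Z_v}(\mathcal D_v+D^{\mathrm{red}}_v+D^v_{w^{\mathrm{red}}_v,w_v}),H_v)_v$.

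To identify this with the formula stated in the corollary, the one point to verify is the additivity identity
$$D^v_{w_0,w^{\mathrm{red}}_v}+D^v_{w^{\mathrm{red}}_v,w_v}=D^v_{w_0,w_v}.$$
This is immediate from the slope description recalled before Definition \ref{forgetful map}: if $D_{w^{\mathrm{red}}_v}=D_{w_0}+\mathrm{div}(f_1)$ and $D_{w_v}=D_{w^{\mathrm{red}}_v}+\mathrm{div}(f_2)$, then $D_{w_v}=D_{w_0}+\mathrm{div}(f_1+f_2)$, and each $D^v_{w,w'}$ is obtained by summing the outgoing slopes at $v$ of the corresponding piecewise linear function, which is additive in $f$. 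With this identity the preimage equals $(\mathscr O_{Z_v}(\mathcal D_v+D^v_{w_0,w_v}),H_v)_v$ as claimed, and the inclusion $H_v\subset H^0(Z_v,\mathscr O_{Z_v}(\mathcal D_v+D^v_{w_0,w_v}))$ is automatic since the preimage lies in $P^r_d(X_0,\bm n,(w_v)_v)$ by construction.

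There is no real obstacle; the corollary is a purely formal consequence of the two preceding results, the only substantive check being the additivity of the twisting divisors $D^v_{w,w'}$ under composition of paths in $G(w_0)$.
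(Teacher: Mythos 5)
Your proposal is correct and follows essentially the route the paper intends: the corollary is the formal composition of Theorem \ref{lifting theorem} (bijectivity of $\mathfrak F_{(w^{\mathrm{red}}_v)_v}$ onto $\mathfrak G^r_d(\mathfrak C_{X_0,\bm n},w_0)$, with explicit preimage $(\mathscr O_{Z_v}(\mathcal D_v+D_v^{\mathrm{red}}),H_v)_v$) with the isomorphism $\mathfrak I$ and its inverse $\mathfrak L$ from Corollary \ref{pre lls isomorphism}. The only detail the paper leaves implicit, the additivity $D^v_{w_0,w^{\mathrm{red}}_v}+D^v_{w^{\mathrm{red}}_v,w_v}=D^v_{w_0,w_v}$, you verify correctly from the slope description of $D^v_{w,w'}$, so nothing is missing.
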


We are now able to define the weak glueing condition for limit linear series on $\mathfrak C_{X_0,\bm n}$ of multidegree $w_0$ by lifting to a pre-limit linear series on $(X_0,\bm n)$, and the definition is independent of the choice of lifting by Corollary \ref{pre lls isomorphism}:

\begin{defn}\label{weak glueing revised}
Let $(\mathcal D,\mathcal H)\in\mathfrak G^r_d(\mathfrak C_{X_0,\bm n},w_0)$ be a limit $\mathfrak g^r_d$ on $\mathfrak C_{X_0,\bm n}$. We say that $(\mathcal D,\mathcal H)$ satisfies the \textbf{weak glueing condition} with respect to $w_0$ if $\mathfrak F_{(w_v)_v}^{-1}((\mathcal D,\mathcal H))$ satisfies the weak glueing condition with respect to $(w_v)_v$ for a tight tuple $(w_v)_v$ in $V(G(w_0))$. 
\end{defn}


\begin{prop}\label{weak glueing well behaved}
$\mathrm{(1)}$ Suppose $(\mathcal D,\mathcal H)\in\mathfrak G^r_d(\mathfrak C_{X_0,\bm n},w)$ for $w=w_0$ and $w=w_0'$. Then $(\mathcal D,\mathcal H)$ satisfies the weak glueing condition with respect to $w_0$ if and only if it satisfies that with respect to $w_0'$.

$\mathrm{(2)}$ Suppose $w_0=(w_G,\mu)$. Take $m\in \mathbb Z_{\geq 0}$ and $\bm n'\colon E(G)\rightarrow \mathbb Z_{>0}$ such that $\bm n'(e)=m\bm n(e)$ for all $e\in E(G)$. Let $(\mathcal D',\mathcal H')$ be a limit $\mathfrak g^r_d$ on $\mathfrak C_{X_0,\bm n'}$ such that $ H_v'= H_v$ and $\mathcal D'$ corresponds to $w'=(w_G',\mu')$ where $w_G'=w_G$ and $\mu'(e)=m\mu(e)$ for all $e\in E(G)$. Then $(\mathcal D,\mathcal H)$ satisfies the weak glueing condition if and only if $(\mathcal D',\mathcal H')$ satisfies the weak glueing condition.
\end{prop}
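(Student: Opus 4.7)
The plan for both parts is to descend the weak glueing condition to the pre-limit linear series side via the bijection of Corollary \ref{isomorphism for tight tuple} and to exploit the naturality in Corollary \ref{pre lls isomorphism}.

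For (1), since $(\mathcal D,\mathcal H)$ has representatives of both multidegrees $w_0$ and $w_0'$, the integral edge-reduced divisors $D_{w_0}$ and $D_{w_0'}$ are linearly equivalent on $\Gamma$, so $V(G(w_0))=V(G(w_0'))$ as subsets of admissible multidegrees. The tight tuple $(w_v^{\mathrm{red}})_v$ of $v$-reduced representatives is therefore the same for both. By Corollary \ref{pre lls isomorphism} the weak glueing condition for $(\mathcal D,\mathcal H)$ with respect to either $w_0$ or $w_0'$ unravels to the weak glueing condition for the single pre-limit linear series $\mathfrak F_{(w_v^{\mathrm{red}})_v}^{-1}((\mathcal D,\mathcal H)) \in P^r_d(X_0,\bm n,(w_v^{\mathrm{red}})_v)$, so the two conditions coincide.

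For (2), take the tight tuple $(w_v)_v=(w_v^{\mathrm{red}})_v$ in $V(G(w_0))$ and define the scaled tuple by $w_v'=(w_{v,G},m\mu_v)$ when $w_v=(w_{v,G},\mu_v)$. The plan is to verify two key compatibilities. First, because $v$-reducedness of a divisor is a combinatorial property preserved under the natural scaling $\Gamma\to\Gamma'$ by $m$, the tuple $(w_v')_v$ is the tuple of $v$-reduced multidegrees in $V(G(w_0'))$, hence tight, with $b_{v_1,v_2}'=m\,b_{v_1,v_2}$. Second, a direct computation from the twisting-divisor formula preceding Proposition \ref{glueing isomorphism}, using $\bm n'=m\bm n$ and $\mu_v'=m\mu_v$, yields $D_{mk}^{e,v,'}=D_k^{e,v}$ for every $k\geq 0$, with $D_i^{e,v,'}$ constant on each interval $[mk,\,m(k+1))$. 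These two facts together imply that condition (I) of Definition \ref{limit linear series on curves} is literally the same in the two setups, so $P^r_d(X_0,\bm n,(w_v)_v)=P^r_d(X_0,\bm n',(w_v')_v)$ as sets; moreover the subquotients $V_v(-D_j^{e,v})/V_v(-D_{j+1}^{e,v})$ and the torus actions at the common node points that appear in Definition \ref{weak glueing condition} coincide with their scaled counterparts, so the weak glueing condition is literally the same condition on this common set.

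It remains to identify $(\mathcal D',\mathcal H')$ with the image of $(\mathscr L_v,V_v)_v:=\mathfrak F_{(w_v)_v}^{-1}((\mathcal D,\mathcal H))$ under $\mathfrak F_{(w_v')_v}$. Applying $\mathfrak F_{(w_v')_v}$ to $(\mathscr L_v,V_v)_v$ with a choice of sections $s_v\in V_v$ and $w=w_0'$ yields a limit $\mathfrak g^r_d$ on $\mathfrak C_{X_0,\bm n'}$ of multidegree $w_0'$; the standard change of representative with $\mathfrak f_\Gamma$ constant and $\mathfrak f_v=1/s_v$ replaces the $C_v$-parts $H_v/s_v$ by $H_v$ and adjusts the $C_v$-divisors accordingly, producing a representative with $\mathcal H'=(H_v)_v$ and multidegree $w_0'$. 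By the bijectivity in Corollary \ref{isomorphism for tight tuple} applied on the scaled side, this equivalence class must coincide with the given $(\mathcal D',\mathcal H')$. Hence both weak glueing conditions reduce, via $\mathfrak F^{-1}$ on their respective sides, to the single weak glueing condition on $(\mathscr L_v,V_v)_v$, and (2) follows. The main technical obstacle is the twisting-divisor identity $D_{mk}^{e,v,'}=D_k^{e,v}$ and the scaling $b_{v_1,v_2}'=m\,b_{v_1,v_2}$; granted these, the rest is an exercise in unravelling the definitions and invoking Corollary \ref{isomorphism for tight tuple}.
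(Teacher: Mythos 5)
Your proof is correct and takes essentially the same route as the paper: for (1) both arguments rest on the observation that $D_{w_0}$ and $D_{w_0'}$ are linearly equivalent, hence $G(w_0)=G(w_0')$ and the weak glueing condition is defined via the same tight tuple and the same pre-limit linear series; for (2) you are unpacking the paper's one-line remark that scaling preserves concentrated (equivalently, reduced) multidegrees, via the compatibilities $D^{e,v,\prime}_{mk}=D^{e,v}_k$, $b_{v_1,v_2}'=m\,b_{v_1,v_2}$, and the explicit preimage formula of Corollary \ref{isomorphism for tight tuple}. (One minor slip: $D^{e,v,\prime}_\bullet$ is constant on $(mk,m(k+1)]$ rather than $[mk,m(k+1))$, so the critical indices are exactly the $mj$ with $j$ critical; this does not affect your conclusion.)
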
 
\begin{proof}
(1) Note that Definition \ref{weak glueing revised} only depends on $V(G(w_0))$. We have $D_{w_0}$ linearly equivalent to $D_{w_0'}$ as divisors on $\Gamma$. It is then easy to check that $w_0\in V(G(w_0'))$ and hence $G(w_0)=G(w_0')$. 

(2) Obvious, note that the scaling of a concentrated admissible multidegree is still concentrated. 
\end{proof}

Now we define the weak glueing condition for arbitrary limit linear series on $\mathfrak C_{X_0,\bm n}$:

\begin{defn}\label{weak glueing completed}
We say that a limit linear series $(\mathcal D,\mathcal H)$ on $\mathfrak C_{X_0,\bm n}$ satisfies the \textbf{weak glueing condition} if after scaling (by integer) of $\bm n$ there is a multidegree $w_0$ on $(G,\bm n)$ such that $(\mathcal D,\mathcal H)$ is contained in $ \mathfrak G^r_d(\mathfrak C_{X_0,\bm n},w_0)$ and satisfies the weak glueing condition with respect to $w_0$.

\end{defn}
Note that the definition above is independent of the choice of $w_0$ and scaling by Proposition \ref{weak glueing well behaved}. See Example \ref{example} for a imit linear series on a metrized complex that does not satisfy the weak glueing condition.

\section{Smoothing of limit linear series on metrized complexes} 
In this section we use the same symbols as in Notation \ref{notation} and assume that $\kappa$ is algebraically closed. We consider special cases of $w_0$ and $(X_0,\bm n)$ such that (1) the space of limit linear series on $(X_0,\bm n)$ is of expected dimension, and (2) any pre-limit linear series satisfying the weak glueing condition lifts to a limit linear series on $(X_0,\bm n)$. This along with the forgetful map constructed in the last section will guarantee the smoothability of certain limit linear series on $\mathfrak C_{X_0,\bm n}$. 

In order to prove the main theorems, we first recall some results from \cite{osserman2014limit} and \cite{osserman2014dimension} for convenience. 

\begin{thm}\label{a smoothing theorem} 
$\mathrm{(}$\cite[Theorem 6.1]{osserman2014limit}$ \mathrm{)}$ If $\pi\colon X\rightarrow \mathrm{Spec}(R)$ is a regular smoothing family with special fiber $\widetilde X_0$, and the space of limit linear series on $(X_0,\bm n)$ has the expected dimension $\rho=g+(r+1)(d-r-g)$, then every limit linear series on $(X_0,\bm n)$ can be smoothed to a linear series on the generic fiber $X_\eta$. 
\end{thm}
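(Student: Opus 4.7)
The plan is to build a relative moduli space $\mathcal{G}^r_d(X/B)$ of (Osserman-style) limit linear series over $B=\mathrm{Spec}(R)$, whose fiber over the closed point is the space of limit $\mathfrak{g}^r_d$'s on $(X_0,\bm n)$ in the sense of Definition \ref{limit linear series on curves}, and whose fiber over the generic point is the classical Brill--Noether space $G^r_d(X_\eta)$ of $\mathfrak{g}^r_d$'s on the smooth curve $X_\eta$. The desired smoothing statement then follows by combining a universal lower bound on the dimension of $\mathcal{G}^r_d(X/B)$ at every geometric point with properness of $\mathcal{G}^r_d(X/B)\to B$.

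To construct $\mathcal{G}^r_d(X/B)$, I would first fix an enriched structure (induced by the regular family) and a tight tuple $(w_v)_v$, then work with the relative Picard scheme in multidegree $w_0$ together with the relative Grassmannian bundles parameterizing, for each $v\in V(G)$, an $(r+1)$-plane $V_v$ inside the pushforward of the universal $\mathscr{L}^v$. The multivanishing conditions of Definition \ref{limit linear series on curves}(I) along the twisting divisors $D_\bullet^{e,v}$ can be imposed as Schubert-type determinantal loci on these Grassmannian bundles, and the glueing conditions (II) encoded via the isomorphisms $\varphi_j^{e,v}$ from Proposition \ref{glueing isomorphism} give further incidence relations. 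The key local input is a codimension estimate showing that, at every geometric point, each of these Schubert/incidence conditions cuts down the ambient dimension by at most its expected value, so that every irreducible component of $\mathcal{G}^r_d(X/B)$ has dimension at least $\rho+1$ at every point.

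With the universal lower bound in hand, the smoothing is a clean dimension-and-properness argument. The generic fiber $\mathcal{G}^r_d(X/B)_\eta = G^r_d(X_\eta)$ has pure dimension $\rho$ by classical Brill--Noether theory on the smooth curve $X_\eta$. By hypothesis the special fiber also has dimension exactly $\rho$. If some irreducible component $Z\subset \mathcal{G}^r_d(X/B)$ were set-theoretically contained in the special fiber, then $\dim Z\leq \rho$, contradicting the universal lower bound $\dim Z\geq \rho+1$. Hence every component of $\mathcal{G}^r_d(X/B)$ dominates $B$, so that every closed point of the special fiber lies in the closure of the generic fiber. Properness of $\mathcal{G}^r_d(X/B)\to B$, inherited from the relative Picard scheme and the Grassmannian bundles, then upgrades this closure statement to the existence of an actual $R$-point (possibly after a finite extension of $R$) specializing to the prescribed limit linear series, which is exactly a smoothing to a $\mathfrak{g}^r_d$ on $X_\eta$.

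The main obstacle is the universal codimension estimate underlying the lower bound $\dim\geq \rho+1$, because Osserman's setup on $(X_0,\bm n)$ allows non-refined limit linear series where multiple critical indices in $D_\bullet^{e,v}$ can be clustered and where condition (II) of Definition \ref{limit linear series on curves} involves identifying several sections simultaneously through the isomorphisms $\varphi_j^{e,v}$. Verifying that the Schubert-incidence stratification still cuts out the expected codimension at every geometric point, particularly along degenerate strata where several glueing conditions collapse, requires a careful local calculation on each component and is the technical heart of the argument; once it is in place, the remainder of the proof is essentially formal.
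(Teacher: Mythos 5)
The theorem you are asked to prove is not proved in this paper at all: it is quoted from \cite[Theorem 6.1]{osserman2014limit}, so the only meaningful comparison is with the argument of that reference. Your global strategy does coincide with Osserman's: build a relative moduli space $\mathcal G^r_d(X/B)$ over $B=\mathrm{Spec}(R)$ whose special fiber is the space of limit linear series on $(X_0,\bm n)$ and whose generic fiber is $G^r_d(X_\eta)$, establish that every irreducible component has dimension at least $\rho+1$ at every point, and observe that the hypothesis $\dim=\rho$ on the special fiber then forbids any component from lying entirely over the closed point, so the given limit linear series lies in the closure of the generic fiber and can be smoothed after a suitable extension. However, as a proof your proposal has a genuine gap exactly where you flag it: the pointwise lower bound $\dim\geq\rho+1$ \emph{is} the technical content of the theorem, and the route you sketch --- imposing condition (I) of Definition \ref{limit linear series on curves} as Schubert-type determinantal loci and condition (II) as incidence relations via the maps $\varphi_j^{e,v}$ --- is not obviously implementable: condition (II), as stated (existence of compatible bases), is not a priori a determinantal condition of controlled expected codimension, and naive edge-by-edge codimension counts on the two sides of a node do not combine to the number $\rho$ without substantial reorganization. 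Osserman's construction packages the data differently (the glueing is built into a linked ambient space cut out by an explicit, counted collection of determinantal equations inside a smooth scheme of known dimension), precisely so that the bound holds at every point, including the non-refined strata you mention. Deferring that calculation defers the theorem.

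Two auxiliary assertions are also off, though neither is load-bearing. First, the generic fiber $G^r_d(X_\eta)$ need not be of pure dimension $\rho$: for an arbitrary smooth curve classical Brill--Noether theory gives only $\dim\geq\rho$ when nonempty, with equality for general curves; fortunately your contradiction argument never uses this claim. Second, properness of $\mathcal G^r_d(X/B)\to B$ is not ``inherited from the relative Picard scheme,'' since for a curve of pseudocompact type the dual graph of $\widetilde X_0$ has cycles and $\mathrm{Pic}^{w_0}$ of the special fiber has a nontrivial toric part, hence is not proper; properness of the limit linear series space is a separate statement requiring its own argument. It is also unnecessary for the direction you need: once the given limit linear series lies in the closure of the generic fiber, one takes a component dominating $B$ through that point and a valuation ring dominating its local ring, producing a linear series on $X_\eta$ (over an extension of $K$) specializing to the given limit linear series, with no properness input.
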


\begin{defn}
Let $X$ be a smooth projective curve of genus $g$, and fix integers $r,d,n>0$, and for $i=1,...,n$ fix also $m_i>0$. Choose distinct points $P_i^k$ on $X$ for $i=1,...,n$ and $k=1,...,m_i$. Then we say that $(X,(P_i^k)_{i,k})$ is \textbf{strongly Brill-Noether general} for $r,d$ if, for all tuples of nondecreasing effective divisor sequences $D^i_\bullet$, such that every divisor $D^i_\bullet$ is supported on $P_i^1,...,P^{m_i}_i$, and for every tuple of nondecreasing sequences $a_\bullet^i$ such that for each $j$ we have $a_j^i=\deg D^i_l$ for some $\ell$ critical for $D^i_\bullet$ and the number of repetitions of $a^i_j$ is at most $\deg (D_{\ell+1}^i-D_\ell^i)$, the space $G^r_d(X,(D^i_\bullet,a^i_\bullet)_i)$ of $\mathfrak g^r_d$s on $X$ having multivanishing sequence at least $a^i_\bullet$ along $D^i_\bullet$ for each $i$ has the expected dimension
$$\rho:=g+(r+1)(d-r-g)-\sum_{i=1}^{n}\Biggl(\sum_{j=0}^{r}(a_j^i-j)+\sum_{\ell=0}^{b_i}{r^i_\ell\choose 2}\Biggr)$$ if it is nonempty. In the above, $D^i_\bullet$ is indexed from $0$ to $b_i+1$, and $r_\ell^i$ is defined to be $0$ if $\ell$ is not critical for $D^i_\bullet$, and the number of times $\deg D_\ell^i$ occurs in $a^i_\bullet$ if $\ell$ is critical.
\end{defn}

We refer to \cite[Theorem 3.3]{osserman2014dimension} for examples of strongly Brill-Boether general curves. The simplest example is that $X$ is a rational curve and $n\leq 2$, which will be used in Section 5.

\begin{thm}\label{theorem of dimension}
$\mathrm{(}$\cite[Corollary 5.2]{osserman2014dimension}$ \mathrm{)}$ 
Suppose $(X_0,\bm n)$ satisfies the following condition:

$\ \ \ \mathrm{(I)}$ there are at most three edges of $G$ connecting any given pair of vertices;

$\ \mathrm{(II)}$ there exists a $d'$ such that for any adjacent vertices $v, v'$ of $G$, connected by edges $(e_i)_i$, and any integers $(x_i)_i$ with $\sum_i x_i\bm n(e_i)=0$, if there is a unique $j$ with $x_j>0$ then we have $\sum_i\lfloor x_j\bm n(e_j)/\bm n(e_i)\rfloor>d'$;

$\mathrm{(III)}$ each marked component of $X_0$ is strongly Brill-Noether general.

Then the space of limit linear series on $(X_0,\bm n)$ (of multidegree $w_0$) of degree $d$ with $d\leq d'$ is pure of expected dimension $\rho$.
\end{thm}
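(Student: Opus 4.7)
The plan is to realize the moduli space of limit linear series on $(X_0,\bm n)$ of multidegree $w_0$ as a locally closed subscheme of the product $\prod_{v \in V(G)} G^r_{d_v}(Z_v)$, stratified by admissible tuples $(a_\bullet^{e,v})_{e,v}$ of multivanishing sequences satisfying condition~(I) of Definition~\ref{limit linear series on curves}. Choosing a tight tuple such as $(w_v^{\mathrm{red}})_v$ from Proposition~\ref{reduced concentrated} fixes the twisting divisors $D_\bullet^{e,v}$, after which each stratum is cut out inside the product by the gluing condition~(II) of Definition~\ref{limit linear series on curves}, one condition per edge of $\overline G$.

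On a fixed stratum, hypothesis~(III) says that each marked component $(Z_v,\mathcal A_v)$ is strongly Brill-Noether general, so the factor $G^r_{d_v}\bigl(Z_v,(D_\bullet^{e,v},a_\bullet^{e,v})_{e\ni v}\bigr)$ has the expected dimension
\[
g_v+(r+1)(d_v-r-g_v)-\sum_{e\ni v}\Bigl(\textstyle\sum_j(a_j^{e,v}-j)+\sum_\ell\binom{r_\ell^{e,v}}{2}\Bigr)
\]
whenever nonempty. Taking the product over $v$ gives the expected total dimension of the ambient stratum. I would then show that the gluing condition for each $e\in E(\overline G)$ with endpoints $v,v'$ imposes exactly the codimension
\[
\textstyle\sum_{j\text{ critical}} g_j\bigl(\deg(D_{j+1}^{e,v})-\deg(D_j^{e,v})-g_j\bigr),
\]
using Proposition~\ref{glueing isomorphism} to interpret the glueing as an incidence in a product of Grassmannians via $\varphi_j^{e,v}$. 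A direct algebraic manipulation then identifies the sum of these contributions with the combinatorial defect needed to descend from the product dimension to $\rho=g+(r+1)(d-r-g)$.

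The combinatorial conditions~(I) and~(II) are used precisely to make this identification hold on every stratum. Condition~(I), bounding by three the number of edges over any $e\in E(\overline G)$, restricts each divisor difference $D_{i+1}^{e,v}-D_i^{e,v}$ to a very short list, so only finitely many qualitative patterns of multivanishing sequences can occur. Condition~(II), together with the bound $d\leq d'$, eliminates the remaining pathological cases: a failure of purity would produce an integer relation $\sum_i x_i\bm n(e_i)=0$ with a unique positive $x_j$, and the hypothesis $\sum_i\lfloor x_j\bm n(e_j)/\bm n(e_i)\rfloor>d'$ then forces one of the associated $\deg D_\bullet^{e,v}$ to exceed $d$, an impossibility.

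The main obstacle will be the interaction between the three-edge case from~(I) and the numerical bound in~(II). Two edges over the same $e\in E(\overline G)$ admit a direct check, since the multivanishing steps are supported on at most two points, but a third edge allows a genuinely new family of twisting patterns whose gluing codimension has to be matched exactly against the generic Brill-Noether count. Carrying out this case-by-case match, and then stitching the dimension equalities across all strata into a global purity statement, is where the technical weight of the argument lies.
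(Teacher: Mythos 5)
You should first be aware that the paper does not prove this statement at all: Theorem \ref{theorem of dimension} is imported verbatim from \cite[Corollary 5.2]{osserman2014dimension}, and the only place its proof is even touched is in Lemma \ref{weak glueing smoothing}, where the author extracts from that external argument the possible shapes (a) and (b) of the twisting divisors $D^{e,v}_\bullet$. Your outline does follow the same broad strategy as the cited proof — stratify by multivanishing sequences, use strong Brill--Noether generality of the components to control each factor, and balance the glueing conditions against the vanishing penalties — so the plan itself is reasonable.

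The gap is that essentially all of the technical content sits in the two steps you assert rather than prove, and the first of them is false as stated. Condition (II) of Definition \ref{limit linear series on curves} does not impose ``exactly'' the codimension $\sum_j g_j\bigl(\deg D^{e,v}_{j+1}-\deg D^{e,v}_j-g_j\bigr)$: the subspaces $V_v(-D^{e,v}_j)/V_v(-D^{e,v}_{j+1})$ and $V_{v'}(-D^{e,v'}_{b_{v,v'}-j})/V_{v'}(-D^{e,v'}_{b_{v,v'}-j+1})$ are not in general position (they are constrained only by hypothesis (III)), they may meet in larger dimension than expected, and the condition is imposed not on the product of component spaces alone but on that product together with the torus of glueing data of $\mathscr L$ along the edges over $e$, of dimension $c=\#\{e'\ \mathrm{over}\ e\}-1$ — the $b_1(G)$ part of $g$ that is entirely absent from your bookkeeping. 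The real argument must show that any deficiency in the codimension cut out by the glueing is offset by the $\sum_{\ell}\binom{r_\ell}{2}$-type penalties appearing in the strongly Brill--Noether general count, and this is precisely where (I), (II) and $d\le d'$ enter: they force that within the window $0\le j\le b_{v,v'}$ there is at most one jump $\deg D^{e,v}_{j+1}-\deg D^{e,v}_j=3$, or at most two jumps of size $2$ whose supports overlap in one point (the cases (a), (b) later quoted in Lemma \ref{weak glueing smoothing}), so the excess can be compared with $c$. Your description of the role of (II) (``a failure of purity would produce an integer relation\dots forces one of the $\deg D^{e,v}_\bullet$ to exceed $d$'') gestures at the right modular-arithmetic mechanism but is not an argument, and the concluding ``direct algebraic manipulation'' — which also needs the relation between $\sum_v d_v$, $d$ and the twisting divisors, together with the lower bound $\dim\ge\rho$ on every component to upgrade an upper bound to purity — is exactly the content of the cited result and cannot be taken for granted.
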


We first let $\bm n$ vary. For a family of $w_0$ the condition (1) and (2) in the beginning of this section is satisfied for all $X_0$ with strongly Brill-Noether general components. Moreover the weak glueing condition is satisfied automatically.   

\begin{thm}\label{smoothing general limit grd}
Let $\{e^i_j\}_{1\leq i\leq k_j}$ be all edges of $G$ over an edge $e_j\in E(\overline G)=\{e_j\}_{j\in J}$. Let $(\mathcal D,\mathcal H)$ be a limit $\mathfrak g^r_d$ on $\mathfrak C_{X_0,\bm n}$ with rational and edge-reduced $\mathcal D$. Suppose $G$ is directed such that the edges over the same edge of $\overline G$ have the same tail. Let $x_j^i\in[0, \bm n(e_j^i))$ correspond to the point in $\mathcal D\cap (e_j^i)^\circ$ where we set $x_j^i=0$ if the intersection is empty. Suppose that:

$\ \mathrm{(1)}$ There is a number $m\in \mathbb Z$ such that $mx_j^i\in \mathbb Z$ for all $j$ and $1\leq i\leq k_j$, and for each $j$ the classes of $mx_j^i$ where $1\leq i\leq k_j$ modulo gcd$(m\bm n(e_j^1),...,m\bm n(e_j^{k_j}))$ are distinct.

$\ \mathrm{(2)}$ Each component of $X$ is strongly Brill-Noether general.

Then the limit linear series $(\mathcal D,\mathcal H)$ is smoothable.

 \end{thm}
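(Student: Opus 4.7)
The strategy is to translate the question on $\mathfrak{C}_{X_0,\bm n}$ into one about limit linear series on $(X_0,\bm n)$ via the bijection $\mathfrak{F}_{(w_v)_v}$ of Corollary \ref{isomorphism for tight tuple} and then to invoke Theorem \ref{a smoothing theorem}. First, using Proposition \ref{weak glueing well behaved}(2) I would replace $\bm n$ by $m\bm n$ so that every $mx_j^i$ is an integer; this reduces me to an admissible multidegree $w_0$ on $(G,m\bm n)$ in which, for each $e_j\in E(\overline G)$, the residues $\mu(e_j^i)$ are pairwise distinct modulo $\gcd(m\bm n(e_j^1),\dots,m\bm n(e_j^{k_j}))$. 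I then fix a tight tuple $(w_v)_v\in V(G(w_0))$ and let $(\mathscr L_v,V_v)_v=\mathfrak{F}_{(w_v)_v}^{-1}(\mathcal D,\mathcal H)$ be the resulting pre-limit $\mathfrak g^r_d$.

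Next I would analyze the twisting divisors $D_\bullet^{e,v}$ from the formula
$$D_{i+1}^{e,v}-D_i^{e,v}=\sum_{\substack{e'\text{ over }e\\ \sigma(e',v)\mu_v(e')\equiv -i\,(\bmod\bm n(e'))}}P_{e'}^v.$$
The distinctness hypothesis (1) forces at most one edge $e'$ over $e$ to satisfy this congruence for any given $i$, so every critical increment $D_{j+1}^{e,v}-D_j^{e,v}$ has degree exactly one. By the first case of the example following Remark \ref{weak glueing remark}, this makes the weak glueing condition trivially satisfied for $(\mathscr L_v,V_v)_v$, and via Definition \ref{weak glueing revised} for $(\mathcal D,\mathcal H)$ as well.

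The central step will be to promote the pre-limit $\mathfrak g^r_d$ to a genuine limit $\mathfrak g^r_d$ on $(X_0,\bm n)$. Because each critical quotient $V_v(-D_j^{e,v})/V_v(-D_{j+1}^{e,v})$ is one-dimensional, condition (II) of Definition \ref{limit linear series on curves} degenerates to rescaling a single basis vector on each side so that it is exchanged with its partner under $\varphi_j^{e,v}$ of Proposition \ref{glueing isomorphism}; since the bases $s_\bullet^{e,v}$ in Definition \ref{limit linear series on curves} are chosen edge by edge, these rescalings can be made independently and do not conflict. This gives a limit linear series $(\mathscr L,(V_v)_v)$ lifting $(\mathscr L_v,V_v)_v$ for any choice of enriched structure on $\widetilde X_0$.

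Finally I would show that the moduli space of limit $\mathfrak g^r_d$s on $(X_0,\bm n)$ of multidegree $w_0$ is pure of expected dimension $\rho$. Because every critical increment has degree one, the repetition numbers $r_\ell^i$ appearing in the strong Brill-Noether formula satisfy $r_\ell^i\leq 1$, so $\binom{r_\ell^i}{2}=0$, and the expected-dimension count at each component $Z_v$ coming from hypothesis (2) aggregates, using the combinatorics of tight tuples, to the global expected dimension (this is morally the same count as in Theorem \ref{theorem of dimension}, only now without the ``at most three edges'' hypothesis, thanks to the degree-one increments). With expected dimension in hand, Theorem \ref{a smoothing theorem} smooths $(\mathscr L,(V_v)_v)$ to a $\mathfrak g^r_d$ on the generic fiber of a regular smoothing family, whose specialization to $\mathfrak C_{X_0,\bm n}$ is $(\mathcal D,\mathcal H)$ by compatibility of $\mathfrak F$ with specialization. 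The main obstacle I anticipate is the dimension count: even though each local factor behaves well, assembling a clean global equality $\dim=\rho$ from the decompositions over vertices and over the critical indices of the twisting sequences requires careful bookkeeping of how the vanishing data on adjacent components cancels against the multidegree $w_0$.
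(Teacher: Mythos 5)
Your proposal is correct and follows essentially the same route as the paper: rescale to the integral case, pull $(\mathcal D,\mathcal H)$ back through $\mathfrak F_{(w_v)_v}^{-1}$ to a pre-limit linear series, observe that the distinctness hypothesis forces every increment $D_{j+1}^{e,v}-D_j^{e,v}$ to have degree one so that condition (II) of Definition \ref{limit linear series on curves} holds automatically for any choice of $\mathscr L$ and enriched structure, and then smooth via Theorem \ref{a smoothing theorem} combined with the compatibility statement of Lemma \ref{compatibleness between smoothing}. The dimension bookkeeping you flag as the main obstacle is exactly what the paper outsources to \cite[Theorem 4.1]{osserman2014dimension}, which gives purity of the expected dimension $\rho$ precisely because no glueing conditions are imposed, so no new counting argument is required.
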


Note that if $\mathcal D$ is of multidegree $w_0=(w_G,\mu)$, then condition (1) of Theorem \ref{smoothing general limit grd} says that, with the given direction of $G$ in loc.cit., for each $j$ the classes of $\mu (e_j^i)$ where $1\leq i\leq k_j$ modulo gcd$(\bm n(e_j^1),...,\bm n(e_j^{k_j}))$ are distinct.

\begin{lem}\label{compatibleness between smoothing}
Let $(\mathscr L,(V_v)_v)$ be a limit $\mathfrak g^r_d$ on $(X_0,\bm n)$ of multidegree $w_0$ with respect to $(w_v)_v$, and $(\mathscr L^{v},V_v)_v$ be the induced pre-limit linear series. Let $X\rightarrow \mathrm{Spec}(R)$ be a regular smoothing family with special fiber $\widetilde X_0$. If $(\mathscr L,(V_v)_v)$ can be smoothed to a $\mathfrak g^r_d$ on $X_\eta$ then $\mathfrak F((\mathscr L^v,V_v)_v)$ is smoothable in Amini-Baker sense. 
\end{lem}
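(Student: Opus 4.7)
The plan is to identify the Amini--Baker specialization of the smoothed generic linear series with the image $\mathfrak F((\mathscr L^v,V_v)_v)$. First, I would base-change $X$ along $\widetilde R\rightarrow R$; since $\kappa$ is algebraically closed the special fiber remains $\widetilde X_0$, and the smoothed $\mathfrak g^r_d$ pulls back to one on $X_{\widetilde K}$. Next, for each $e\in E(G)$ I would blow down the inserted chain of $\bm n(e)-1$ projective lines in $\widetilde X_0$ to obtain a (generally non-regular) strongly semistable model $\mathfrak X$ of $X_{\widetilde K}$: the special fiber of $\mathfrak X$ is $X_0$, and the local equation at each node $P_e$ takes the form $xy-\omega_e$ with $\mathrm{val}(\omega_e)=\bm n(e)$. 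Therefore $\mathfrak C\mathfrak X=\mathfrak C_{X_0,\bm n}$ as metrized complexes.

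Applying the Amini--Baker specialization map $\tau_*^{\mathfrak C\mathfrak X}$ to the pair $(\mathscr L_\eta,V_\eta)$ on $X_{\widetilde K}$ then yields a limit $\mathfrak g^r_d$ $(\mathcal D,\mathcal H)$ on $\mathfrak C_{X_0,\bm n}$ which, by construction, is smoothable in the Amini--Baker sense. What remains is to check that $(\mathcal D,\mathcal H)$ agrees with $\mathfrak F((\mathscr L^v,V_v)_v)$.

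For each $v\in V(G)$, the Osserman smoothing data provides a line bundle $\mathscr L_X$ on $X$ and, after twisting by the enriched-structure bundles to obtain the line bundle $\mathscr L_{X,w_v}$ of multidegree $w_v$, a rank $r+1$ subbundle $\mathcal V_v\subset \pi_*\mathscr L_{X,w_v}$ whose generic fiber is $V_\eta$ and whose restriction to $Z_v$ is $V_v$. I would pick a nonzero $s_v\in V_v$, lift it to a section $\widetilde s_v$ of $\mathscr L_{X,w_v}$ whose reduction on $Z_v$ is $s_v$, and view $\widetilde s_v$ as a nonzero rational section of $\mathscr L_\eta$ on $X_{\widetilde K}$. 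For any $f\in V_\eta$ the Amini--Baker normalized reduction $f_{x_v}$ is then the reduction of $f/\widetilde s_v$ at the Berkovich point $x_v$, which equals the rational function $s/s_v$ where $s\in V_v$ is the image of $f$ under restriction to $Z_v$. This gives $H_v=\{s/s_v:s\in V_v\}$, matching Definition \ref{forgetful map}. Parallel slope-theoretic computations applied to $\log|\widetilde s_v|$ on $\Gamma$ then show that $\mathcal D_v=\mathrm{div}^0(s_v)-D^v_{w_0,w_v}$ and that $\mathcal D_\Gamma$ is linearly equivalent to $D_{w_0}$, finishing the identification.

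The main technical obstacle will be the bookkeeping that relates the Osserman twist (tensoring with the enriched-structure bundles $\mathscr O_v$ to pass from $\mathscr L$ to $\mathscr L_{w_v}$) to the Amini--Baker normalization (rescaling $f$ by a constant $c$ with $|f(x_v)|=|c|$). The identification hinges on the slopes of $\log|\widetilde s_v|$ along the edges of $\Gamma$ accounting exactly for the twist-induced change in multidegree, which I expect to follow from interpreting each $\mathscr O_v$ as recording the crossing orders of the component $Z_v$ in the regular model across the nodes of $\widetilde X_0$.
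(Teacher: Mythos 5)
Your approach is correct, but it is worth noting that the paper itself proves this lemma by a one-line citation of \cite[Proposition 4.16]{osserman2017limit}, which is exactly the compatibility you set out to establish: the Amini--Baker specialization of a $\mathfrak g^r_d$ on the generic fiber of a regular smoothing family coincides (as an equivalence class) with $\mathfrak F$ applied to its Osserman limit. So where the paper outsources the content, you reconstruct it: base change to $\widetilde R$, contract the chains of inserted lines so that each node $P_e$ acquires a local equation $xy-\omega_e$ with $\mathrm{val}(\omega_e)=\bm n(e)$ (an $A_{\bm n(e)-1}$ singularity), giving a strongly semistable model $\mathfrak X$ with $\mathfrak C\mathfrak X=\mathfrak C_{X_0,\bm n}$, and then invoke \cite[Theorem 5.9]{amini2015linear} for smoothability once the specialized pair is identified with $\mathfrak F((\mathscr L^v,V_v)_v)$. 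That identification is the real content, and two points should be made explicit to close your sketch. First, the formula $f_{x_v}=s/s_v$ ``where $s$ is the image of $f$ under restriction to $Z_v$'' is only correct after rescaling $f$ into the lattice $V_\eta\cap\pi_*\mathscr L_{X,w_v}$ so that its reduction is nonzero on $Z_v$; it is precisely the hypothesis that $w_v$ is concentrated at $v$ (forcing $H^0(\widetilde X_0,\mathscr L_{w_v})\rightarrow H^0(Z_v,\mathscr L^v)$ to be injective) that guarantees such a rescaling produces a nonzero element of $V_v$, whence $H_v=\{s/s_v\colon s\in V_v\}$. Second, the divisor identification need only be verified up to the equivalence of Amini--Baker limit linear series, since replacing $D_\eta$ by $D_\eta+\mathrm{div}(g)$ changes the specialization by $\mathrm{div}(\tau_*^{\mathfrak C\mathfrak X}(g))$ together with the corresponding rescaling of the $H_v$, and $\mathfrak F$ is independent of the choices of $w$ and $s_v$ by \cite[Propositions 4.12 and 5.9]{osserman2017limit}; with that reduction, your remaining ``slope bookkeeping'' is exactly the statement that the outgoing slopes of $\log|\widetilde s_v|$ along the edges at $v$ record the divisor $D^v_{w_0,w_v}$ of the twist from $w_0$ to $w_v$, which is what the cited proposition verifies. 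In short: your route is sound and essentially self-contained where the paper is not, at the cost of redoing the reduction-versus-twist comparison that \cite{osserman2017limit} already carries out.
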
 
\begin{proof}
Follows directly from \cite[Proposition 4.16]{osserman2017limit}. 
\end{proof}

\begin{proof}[Proof of Theorem \ref{smoothing general limit grd}]Up to scaling we may assume that $\mathcal D$ integral and of multidegree $w_0$, hence we can take $m=1$. Take a pre-limit linear series $(\mathscr L_v, V_v)_v$ with respect to $(w_v)_v$ that maps to $(\mathcal D,\mathcal H)$ under $\mathfrak F_{(w_v)_v}$. Then condition (1) implies that $\deg(D_{j+1}^{e,v})-\deg(D_j^{e,v})=1$, and the morphism $\varphi_j^{(e,v)}$ in Definition \ref{limit linear series on curves} is a linear morphism between one dimensional spaces. Hence for any line bundle $\mathscr L$ on $\widetilde X_0$ of multidegree $w_0$ such that $\mathscr L^v=\mathscr L_v$ and any enriched structure on $\widetilde X_0$ the condition (II) of loc.cit. is satisfied automatically. In other words $(\mathscr L, (V_v)_v)$ is a limit linear series. By Lemma \ref{compatibleness between smoothing}, it remains to show that $(\mathscr L, (V_v)_v)$ is smoothatble. According to \cite[Theorem B.2]{baker2008specialization} there is a regular smoothing family $X$ over $B$ with special fiber $\widetilde X_0$, and Theorem \ref{a smoothing theorem} says that $(\mathscr L, (V_v)_v)$ can be smoothed to a $\mathfrak g^r_d$ on $X_\eta$ if the the space $G^r_{ w_0}(X_0,\bm n,(\mathscr O_v)_v)$ of limit $\mathfrak g^r_d$s on $(X_0,\bm n)$ has dimension $\rho$ where $(\mathscr O_v)_v$ is the enriched structure induced by $X$. This is ensured by condition (2) and \cite[Theorem 4.1]{osserman2014dimension} since there is no glueing condition imposed. 
\end{proof}

Condition (1) of Theorem \ref{smoothing general limit grd} is trivial when $X_0$ is of compact type, whereas condition (2) is still necessary, see \cite[Example 5.14]{amini2015linear} for a counterexample. As a result we have the following Corollary:

\begin{cor} 
Let $X_0$ be a curve of compact type, and assume the marked components of $X_0$ are all strongly Brill-Noether general. Let $(\mathcal D,\mathcal H)$ be a limit linear series on the metrized complex $\mathfrak C_{X_0,\bm n}$ with rational $\mathcal D$. Then $(\mathcal D,\mathcal H)$ is smoothable. 
\end{cor}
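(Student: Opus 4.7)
The plan is to apply Theorem \ref{smoothing general limit grd} essentially directly. The key structural observation is that $X_0$ being of compact type means the dual graph $G$ is already a tree; in particular $G = \overline G$, so for every edge $e_j \in E(\overline G)$ exactly one edge of $G$ lies over it, i.e.\ $k_j = 1$ for all $j \in J$.

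First I would reduce to the case where the representative $(\mathcal D, \mathcal H)$ has $\mathcal D$ edge-reduced. Since smoothability is a property of the equivalence class of $(\mathcal D,\mathcal H)$ as a limit linear series on $\mathfrak C_{X_0,\bm n}$, and $\mathcal D$ is rational by assumption, one can (after possibly rescaling $\bm n$, which by Proposition \ref{weak glueing well behaved}(2) leaves the underlying structure intact, and applying chip-firing moves on $\Gamma$) pass to an equivalent pair whose divisor has the form $D_{w_0}$ for some admissible multidegree $w_0$ on $(G,\bm n)$, hence is automatically edge-reduced. The admissible multidegree $w_0$ then realizes $\mathcal D$ as in the hypothesis of Theorem \ref{smoothing general limit grd}.

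Next I would verify the two hypotheses of Theorem \ref{smoothing general limit grd}. Condition (2), that each component of $X_0$ is strongly Brill-Noether general, is exactly the standing assumption of the corollary. Condition (1) is automatic in the compact-type setting: with $k_j = 1$ we have only a single rational number $x_j^1 \in [0,\bm n(e_j^1))$ for each $j$, and the requirement that the classes of $m x_j^i$ modulo $\gcd(m\bm n(e_j^1),\ldots,m\bm n(e_j^{k_j}))$ be distinct as $i$ ranges over $1,\ldots,k_j$ is vacuous for one-element sets; one simply chooses $m$ to be a common denominator of the finitely many $x_j^1$. Applying the theorem then yields the smoothability of $(\mathcal D,\mathcal H)$.

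The step requiring most care is the reduction to an edge-reduced representative, since Theorem \ref{smoothing general limit grd} is stated for such divisors; however, this is routine chip-firing on $\Gamma$ combined with the equivalence relation on limit linear series, and once it is carried out the rest of the argument is essentially bookkeeping about the hypothesis in the tree case.
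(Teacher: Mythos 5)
Your argument is correct and is essentially the paper's own: the corollary is deduced there by observing that condition (1) of Theorem \ref{smoothing general limit grd} is vacuous in the compact-type case (each edge of $\overline G$ carries a single edge of $G$, so the distinctness-modulo-$\gcd$ requirement is empty), while condition (2) is the standing Brill--Noether hypothesis. The reduction to an integral, edge-reduced (multidegree-$w_0$) representative that you spell out is the same normalization already performed, via scaling and passing to an equivalent pair, at the start of the proof of Theorem \ref{smoothing general limit grd}, so nothing further is needed.
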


We next consider $(X_0,\bm n)$ as in Theorem \ref{theorem of dimension}, and prove that the weak glueing condition is a  necessary and sufficient condition for the smoothability of a limit linear series on $\mathfrak C_{X_0,\bm n}$ (of certain degree). The necessity is actually valid for arbitrary $(X_0,\bm n)$:

\begin{thm}\label{smoothness implies weak glueing} 
Suppose $(\mathcal D,\mathcal H)$ is a smoothable limit linear $\mathfrak g^r_d$ on $\mathfrak C_{X_0,\bm n}$. Then $(\mathcal D,\mathcal H)$ satisfies the weak glueing condition.
\end{thm}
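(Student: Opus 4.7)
The plan is to leverage the bijection $\mathfrak F_{(w_v)_v}$ established in Corollary \ref{isomorphism for tight tuple} together with Proposition \ref{lls weak}: showing that the preimage of $(\mathcal D,\mathcal H)$ under $\mathfrak F$ is in fact a pre-limit linear series that \emph{lifts} to a genuine Osserman limit linear series on $(X_0,\bm n)$, since then the weak glueing condition is automatic.

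First, by the definition of smoothability there exist a smooth proper curve $X$ over $\widetilde K$, a strongly semistable model $\mathfrak X$ with $\mathfrak C\mathfrak X=\mathfrak C_{X_0,\bm n}$, and a $\mathfrak g^r_d$ $(L,V)$ on $X$ whose Amini--Baker specialization is $(\mathcal D,\mathcal H)$. After passing to a finite extension of the base we may work over $R$ and use a regular model with special fiber $\widetilde X_0$. The line bundle $L$ then extends to a line bundle on this regular model, and twisting by vertical components of $\widetilde X_0$ we may arrange its multidegree to be $w_0$ (the multidegree specified by $\mathcal D_\Gamma$), producing a line bundle $\mathscr L$ on $\widetilde X_0$.

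Second, applying the Osserman specialization construction (see the specialization theorem in \cite{osserman2014limit}) to $(L,V)$ with respect to a chosen tight tuple $(w_v)_{v\in V(G)}$, one obtains a limit linear series $(\mathscr L,(V_v)_{v\in V(G)})$ on $(X_0,\bm n)$ in the sense of Definition \ref{limit linear series on curves}. In particular the underlying tuple $(\mathscr L^v,V_v)_v$ is a pre-limit linear series which \emph{lifts} to a limit linear series, hence by Proposition \ref{lls weak} it satisfies the weak glueing condition.

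Third, I would invoke the compatibility between the two specialization maps, i.e.\ \cite[Proposition 4.16]{osserman2017limit} (the same ingredient used in Lemma \ref{compatibleness between smoothing}), which says that $\mathfrak F_{(w_v)_v}((\mathscr L^v,V_v)_v)$ coincides with the Amini--Baker specialization of $(L,V)$, namely $(\mathcal D,\mathcal H)$. By Definition \ref{weak glueing revised} this is exactly what it means for $(\mathcal D,\mathcal H)$ to satisfy the weak glueing condition (with respect to $w_0$), and Definition \ref{weak glueing completed} promotes this to the general statement. The main technical point to check carefully is the alignment of multidegrees in step one — one must verify that after the twisting adjustment the specialization of the sections of $V$ really lands in the spaces $H^0(Z_v,\mathscr L^v)$ corresponding to the tight tuple $(w_v)_v$, so that the two specializations genuinely match; once this bookkeeping is done, the remaining steps are formal consequences of the already-established bijection and Proposition \ref{lls weak}.
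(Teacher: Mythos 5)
There is a genuine gap, and it sits exactly at the step you describe as "the main technical point": the passage from the data given by smoothability to a regular smoothing family over a DVR. Smoothability only provides a smooth curve $X$ over $\widetilde K$ (the \emph{completed} algebraic closure of $K$) together with a strongly semistable model $\mathfrak X$ over $\widetilde R$; the base $\widetilde R$ is not a discrete valuation ring, and neither $X$ nor the linear series $(L,V)$ need descend to a finite extension of $K$ (already for an elliptic curve one can have $j$-invariant in $\widetilde K$ transcendental over $K$). So the assertion "after passing to a finite extension of the base we may work over $R$ and use a regular model with special fiber $\widetilde X_0$" is unjustified, and without it you cannot invoke the Osserman specialization theorem, which requires a regular smoothing family, nor Lemma \ref{compatibleness between smoothing}/\cite[Proposition 4.16]{osserman2017limit}, whose compatibility statement is formulated for such a family. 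The paper is explicit about this obstruction: in the remark following the proof of Theorem \ref{smoothness implies weak glueing} it states that one cannot assert that $(\mathcal D,\mathcal H)$ is the image under $\mathfrak F_{(w_v)_v}$ of a smoothable limit linear series on $(X_0,\bm n)$, precisely because it is unclear whether a strongly semistable model is a base extension of a regular smoothing family. Your strategy would prove a stronger statement than the theorem, which is why it cannot be carried out by the quoted ingredients alone.

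The paper's actual proof avoids this entirely: it works directly over $\widetilde R$ with the Amini--Baker machinery. Using Corollary \ref{isomorphism for tight tuple} it identifies $H_v\subset H^0(Z_v,\mathscr O_{Z_v}(\mathcal D_v+D_v))$, then for $g\in H_\eta$ it compares the normalized reductions $\tau_{*v}^{\mathfrak C\mathfrak X}(g)$ and $\tau_{*v'}^{\mathfrak C\mathfrak X}(g)$ at the two sides of an edge via the Slope Formula and the effectivity of $\tau_*^{\mathfrak C\mathfrak X}(D_\eta+\mathrm{div}(g))$; a convexity argument on the edges (as in the proof of Theorem \ref{lifting theorem}) produces, for each critical index $j$, subspaces $W_v$ and $W_{v'}$ as reductions of a suitable $g_j$-dimensional subspace of $H_\eta$, and these verify Definition \ref{weak glueing condition} directly (via Remark \ref{weak glueing remark}), with no lifting to an Osserman limit linear series and no appeal to Proposition \ref{lls weak}. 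If you want to salvage your approach you would have to supply a descent/approximation argument producing a regular smoothing family over a DVR with special fiber $\widetilde X_0$ whose generic-fiber linear series still specializes to $(\mathcal D,\mathcal H)$; that is a substantial (and, per the paper, open) point, not bookkeeping.
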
 
\begin{proof}
Since $\mathcal D_\Gamma$ is linearly equivalent to a rational divisor on $\Gamma$, we may assume that $\mathcal D$ is integral and edge-reduced and $\mathcal D_\Gamma=D_{w_0}$. Suppose there is a strongly semistable model $\mathfrak X$ over $\widetilde R$ with associated metrized graph $\mathfrak C\mathfrak X=\mathfrak C_{X_0,\bm n}$ and a linear series $(D_\eta,H_\eta)$ on $\mathfrak X_\eta$ of rank $r$ that specializes to $(\mathcal D,\mathcal H)$.  

Suppose the concentrated degrees $(w_v)_v=(w_v^{\mathrm{red}})_v$ are given. Let $D_v=D^v_{w_0,w_v^{\mathrm{red}}}$. We have $H_v\subset H^0(Z_v,\mathscr O_{Z_v}(\mathcal D_v+D_v))$. Consider $v$ and $v'$ in $V(\overline G)$ connected by $e\in E(\overline G)$. Let $\{e_i\}\subset E(G)$ be the edges over $e$, let $P_i=P_{e_i}^v$ and $P_i'=P_{e_i}^{v'}$. Take $h\in H_v\subset H^0(Z_v,\mathscr O_{Z_v}(\mathcal D_v+D_v))$. 
Choose $g\in H_\eta$ such that $\tau_{*v}^{\mathfrak C\mathfrak X}(g)=h$ up to multiplication by $\kappa^\times$. Let $h'=\tau_{*v'}^{\mathfrak C\mathfrak X}(g).$ According to the Slope Formula \cite[Theorem 4.4(3)]{amini2015linear} we have 
\begin{equation}\mathrm{ord}_{P_i}(h)=-\mathrm{slp}_{e_i,v}(\tau_*^{\mathfrak C\mathfrak X}(g)_\Gamma)\mathrm{\ \ and\ \ } \mathrm{ord}_{P_i'}(h')=-\mathrm{slp}_{e_i,v'}(\tau_*^{\mathfrak C\mathfrak X}(g)_\Gamma).\end{equation} 

On the other hand, since $D_\eta+\mathrm{div}(g)\geq 0$, by  \cite[Theorem 4.5]{amini2015linear} we have $$0\leq \tau_*^{\mathfrak C\mathfrak X}(D_\eta+\mathrm{div}(g))=\mathcal D+\mathrm{div}(\tau_*^{\mathfrak C\mathfrak X}(g)).$$
Restricting to $e_i^\circ$, where $e_i$ is considered as an edge of $\Gamma$ (with vertex set $V(G)$), we have \begin{equation}\mathcal D|_{e_i^\circ}+\mathrm{div}(\tau_*^{\mathfrak C\mathfrak X}(g)_\Gamma)|_{e_i^\circ}\geq 0.\end{equation}


By (2) for each $j$ we have (note that $\mathcal D_v$ is supported on $Z_v\backslash \mathcal A_v$)
$$\mathrm{ord}^0_{P_i}(h)-\mathrm{ord}_{P_i}(D_j^{e,v})=\mathrm{slp}_{e_i,v}(-\tau_*^{\mathfrak C\mathfrak X}(g)_\Gamma)-\mathrm{ord}_{P_i}(D_j^{e,v}-D_v).$$
Let $w^j_v$ be the multidegree obtained from $w_v$ by twisting $j$ times at $(e,v)$ and $\lambda$ be the piecewise linear function on $\Gamma$ such that $\mathrm{div}(\lambda)=\mathcal D_{\Gamma}-D_{w_v^j}$. Let $F_g=-\tau_*^{\mathfrak C\mathfrak X}(g)_\Gamma-\lambda$. We then have $\mathrm{ord}_{P_i}(D_j^{e,v}-D_v)=\mathrm{slp}_{e_i,v}(\lambda)$ and 
\begin{equation}\mathrm{ord}^0_{P_i}(h)-\mathrm{ord}_{P_i}(D_j^{e,v})=\mathrm{slp}_{e_i,v}(F_g).\end{equation}

A symmetric argument shows that 
\begin{equation}\mathrm{ord}^0_{P_i'}(h')-\mathrm{ord}_{P_i'}(D_{r-j}^{e,v'})=\mathrm{slp}_{e_i,v'}(F_g).\end{equation}
 According to (3) we have 
\begin{equation}\mathrm{div}(-F_g)|_{e_i^\circ}= \mathcal D|_{e_i^\circ}+\mathrm{div}(\tau_*^{\mathfrak C\mathfrak X}(g)_\Gamma)|_{e_i^\circ}+(\mathrm{div}(\lambda)-\mathcal D)|_{e_i^\circ}\geq -D_{w_v^j}|_{e_i^\circ}.\end{equation}
Thus the same argument as in the proof of Theorem \ref{lifting theorem} shows that there is an $a$ such that \begin{equation}\mathrm{ord}_{D_\bullet^{e,v}}( h)\geq\deg D^{e,v'}_{a} \mathrm{\ \ and\ \ }\mathrm{ord}_{D_\bullet^{e,v'}}(h')\geq\deg D^{e,v'}_{b_{v,v'}-a}. \end{equation}

Now let $h_0,...,h_r$ be a basis of $H_v$ such that $\mathrm{ord}_{D_\bullet^{e,v}}(h_i)=a_i^{e,v}$, where $\{a_i^{e,v}\}$ is the multivanishing sequence of $(\mathscr O_{Z_v}(\mathcal D_v+D_v),H_v)$ along $D_\bullet^{e,v}$. Let $j$ 
be a critical number for $D_{\bullet}^{e,v}$. It follows that $b_{v,v'}-j$ is critical for $D_{\bullet}^{e,v'}$. Assume $H_v(-D^{e,v}_{j+1})$ is generated by $\{h_{l+1},...,h_r\}$. 
Then $\mathrm{ord}_{D_\bullet^{e,v}}(\ell)\leq \deg(D^{e,v}_j)$ for $\ell\in\mathrm{span}(
h_0,...,h_l)$. Let $g_0,...,g_l$ be rational functions in $H_\eta$ such that $\tau_{*v}^{\mathfrak C\mathfrak X}(g_i)=h_i$. Then $L=\mathrm{span}(g_0,...,g_l)$ is an $(l+1)$-dimensional subspace of $H_\eta$ such that $\tau_{*v}^{\mathfrak C\mathfrak X}(L)=\mathrm{span}(
h_0,...,h_l)$ and $\tau_{*v'}^{\mathfrak C\mathfrak X}(L)$ is an $(l+1)$-dimensional subspace of$H_{v'}$.

It follows (7) that for any $g\in L$ we have $\tau_{*v'}^{\mathfrak C\mathfrak X}(g)\in H_{v'}(-D^{e,v'}_{b_{v,v'}-j})$. 
Now we can pick 
the spaces $W_v$ and $W_{v'}$ (of dimension $g_j$) as described in Definition \ref{weak glueing condition}. Note that $g_j\leq l+1-\dim( H_{v'}(-D^{e,v'}_{b_{v,v'}-j+1}))$. Taking $W_{v'}$ to be an arbitrary subspace of  $\tau_{*v'}^{\mathfrak C\mathfrak X}(L)$ of dimension $g_j$ such that $W_{v'}\cap H_{v'}(-D^{e,v'}_{b_{v,v'}-j+1})=0$ and $W_v=\tau_{*v}^{\mathfrak C\mathfrak X}(W)$, where $W$ is a $g_j$ dimensional subspace of $L$ such that $\tau_{*v'}^{\mathfrak C\mathfrak X}(W)=W_{v'}$ (which can be constructed in the same way as $L$), will complete the proof. 
Indeed, for all $s'\in W_{v'}$, taking $t\in W$ such that $\tau_{*v'}^{\mathfrak C\mathfrak X}(t)=s'$ and $s=\tau_{*v}^{\mathfrak C\mathfrak X}(t)\in \mathrm{span}(
h_0,...,h_l)$   
we have that $\mathrm{ord}_{D_\bullet^{e,v}}(s')=D_{b_{v,v'}-j}^{e,v'}$, and hence $\mathrm{ord}_{D_\bullet^{e,v}}(s)=D_j^{e,v}$ by (7). 
On the other hand, we have
$-D_{w_v^j}|_{e_i^\circ}=\emptyset$ and 
hence $F_{g}$ is a concave function on $e_i^\circ$ by (6) for all $i$ such that $P_i$ is in the support of $D_{j+1}-D_j$. Therefore 
for the same set of $i$ we have that $s$ vanishes at $P_i$ if and only if $s'$ vanishes at $P'_i$ by (4) and (5) where $s$ (resp. $s'$) is considered in $H_v(-D_j^{e,v})/H_v(-D_{j+1}^{e,v})$ (resp. $H_{v'}(-D_{b_{v,v'}-j}^{e,v'})/H_{v'}(-D_{b_{v,v'}-j+1}^{e,v'})$). 
Now the conclusion follows from Remark \ref{weak glueing remark}.

\end{proof}

Note that in the proof of the theorem above we do not assert that $(\mathcal D,\mathcal H)$ is the image (under $\mathfrak F_{(w_v)_v}$) of a smoothable limit linear series on $(X_0,\bm n)$, as $\mathfrak F((\mathscr L^v,V_v)_v)$ in Lemma \ref{compatibleness between smoothing}, since it's unclear whether a strongly semistable model is a base extension of a regular smoothing family. We now focus on the case of Theorem \ref{theorem of dimension}. 

\begin{thm}\label{general edge length smoothing}  
Let $(X_0,\bm n)$ be as in Theorem \ref{theorem of dimension}.
Then for any limit linear series $(\mathcal D,\mathcal H)$ on $\mathfrak C_{X_0,\bm n}$ of multidegree $w_0$ with $\deg(\mathcal D)\leq d'$ the following are equivalent:

$\mathrm{(1)}$ $(\mathcal D,\mathcal H)$ is smoothable;

$\mathrm{(2)}$ $(\mathcal D,\mathcal H)$ satisfies the weak glueing condition;

$\mathrm{(3)}$ $(\mathcal D,\mathcal H)$ is the image under $\mathfrak F_{(w_v)_v}$ of a pre-limit linear series on $(X_0,\bm n)$ that lifts to a limit linear series. 

Moreover we have $(1)\Leftrightarrow(2)$ when $\mathcal D$ is only required to be rational.
\end{thm}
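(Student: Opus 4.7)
The plan is to prove $(1)\Rightarrow(2)\Rightarrow(3)\Rightarrow(1)$, with $(2)\Rightarrow(3)$ being the substantive new content, and then handle the rational case by scaling. First, $(1)\Rightarrow(2)$ is immediate from Theorem \ref{smoothness implies weak glueing}, which holds for arbitrary $(X_0,\bm n)$. For $(3)\Rightarrow(1)$, given a pre-limit linear series mapping to $(\mathcal D,\mathcal H)$ and lifting to a limit linear series $(\mathscr L,(V_v)_v)$ on $(X_0,\bm n)$, I would invoke Theorem \ref{theorem of dimension} to conclude the space of limit linear series has expected dimension $\rho$, then use \cite[Theorem B.2]{baker2008specialization} to produce a regular smoothing family with special fiber $\widetilde X_0$, apply Theorem \ref{a smoothing theorem} to smooth $(\mathscr L,(V_v)_v)$ to a $\mathfrak g^r_d$ on $X_\eta$, and finally invoke Lemma \ref{compatibleness between smoothing} to transfer smoothability to $(\mathcal D,\mathcal H)$.

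The substantive step is $(2)\Rightarrow(3)$. By Corollary \ref{isomorphism for tight tuple} there is a unique pre-limit linear series $(\mathscr L_v,V_v)_v$ mapping to $(\mathcal D,\mathcal H)$ under $\mathfrak F_{(w_v)_v}$, for any chosen tight tuple. The task is to produce a line bundle $\mathscr L$ on $\widetilde X_0$ of multidegree $w_0$ with $\mathscr L^v=\mathscr L_v$ such that condition (II) of Definition \ref{limit linear series on curves} holds; by Remark \ref{def of lls} this translates into the existence, for each edge $e$ of $\overline G$ and each critical $j$, of a common $g_j$-dimensional subspace inside $V_v(-D_j^{e,v})/V_v(-D_{j+1}^{e,v})$ and $V_{v'}(-D_{b_{v,v'}-j}^{e,v'})/V_{v'}(-D_{b_{v,v'}-j+1}^{e,v'})$ under the identification $\varphi_j^{e,v}$. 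The weak glueing condition already furnishes subspaces $W_v,W_{v'}$ of dimension $g_j$ whose torus-orbit intersection dimensions match via $\varphi^{e,v}_j$; crucially, condition (I) of Theorem \ref{theorem of dimension} forces $g_j\leq 3$, so the combinatorial analysis in Remark \ref{weak glueing remark} becomes tractable.

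To execute $(2)\Rightarrow(3)$, I plan a dimension-comparison argument. The forgetful map from $G^r_{w_0}(X_0,\bm n,(\mathscr O_v)_v)$ (the space of limit linear series) to $P^r_d(X_0,\bm n,(w_v)_v)$ lands in the locus cut out by the weak glueing condition (by Proposition \ref{lls weak}), and its fibres record the choice of line bundle $\mathscr L$ realizing the glueing. Theorem \ref{theorem of dimension} pins the source to expected dimension $\rho$, while the strong Brill--Noether genericness of components gives the dimension of the product $\prod_v G^r_{d_v}(Z_v,(D_\bullet^{e,v},a_\bullet^{e,v})_e)$. A careful bookkeeping of codimensions imposed by the weak glueing condition on each edge (using $g_j\leq 3$ and the torus-orbit structure) should show these match so that every weak-glueing-satisfying pre-limit linear series lies in the image, i.e., lifts. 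The main obstacle will be turning the combinatorial torus-orbit matching of the weak glueing condition into the algebraic existence of a line bundle $\mathscr L$ realizing condition (II); this is where I expect to do the most case analysis, modeled on the dimension analysis of \cite{osserman2014dimension}.

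For the ``moreover'' part with only rational $\mathcal D$, scale $\bm n$ to $\bm n' = m\bm n$ for a sufficiently divisible $m$ so that $\mathcal D$ becomes integral on $\mathfrak C_{X_0,\bm n'}$ of some multidegree $w_0'$. The conditions (I)--(III) of Theorem \ref{theorem of dimension} are preserved under this scaling, and Proposition \ref{weak glueing well behaved}(2) ensures the weak glueing condition is preserved in both directions; smoothability is intrinsic to the underlying data. The equivalence $(1)\Leftrightarrow(2)$ in the integral case then transfers back to the rational case.
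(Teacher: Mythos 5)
Your skeleton $(1)\Rightarrow(2)\Rightarrow(3)\Rightarrow(1)$ is the same as the paper's, and the easy implications are handled correctly: $(1)\Rightarrow(2)$ is Theorem \ref{smoothness implies weak glueing}, and your chain for $(3)\Rightarrow(1)$ (Theorem \ref{theorem of dimension} for expected dimension, a regular smoothing family via Baker, Theorem \ref{a smoothing theorem}, then Lemma \ref{compatibleness between smoothing}) is in fact a more explicit version of what the paper does. The problem is $(2)\Rightarrow(3)$, which you correctly identify as the substantive content but do not actually prove. The paper's proof of the theorem rests entirely on Lemma \ref{weak glueing smoothing}: given a pre-limit linear series satisfying the weak glueing condition, one fixes \emph{any} line bundle $\mathscr L$ with $\mathscr L^v=\mathscr L_v$ and then adjusts its glueing data along the edges over each $e\in E(\overline G)$ --- a torus of dimension $c=\#\{e'\ \mathrm{over}\ e\}-1$ --- and a pointwise case analysis, using the structure of the twisting divisors forced by conditions (I)--(II) of Theorem \ref{theorem of dimension} (at most one jump of size $3$, or at most two jumps of size $2$ with controlled overlap), shows that the conditions of Definition \ref{limit linear series on curves}(II) cut out codimension at most $c$ in that torus, so suitable glueing data exist. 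This is a direct existence argument for each individual pre-limit linear series.

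Your replacement for this step is a ``dimension-comparison argument'': compute the dimension of the limit linear series space (which is $\rho$) and of the weak-glueing locus in $P^r_d(X_0,\bm n,(w_v)_v)$, and conclude that ``these match so that every weak-glueing-satisfying pre-limit linear series lies in the image.'' That inference is not valid: equality of dimensions of source and target does not give surjectivity of the forgetful map onto the weak-glueing locus (the image could be a proper subvariety of the same dimension, or miss components entirely), and the weak glueing condition is an existence statement about subspaces with prescribed torus-orbit intersections, not an obviously closed condition with a predictable codimension, so even the dimension of the target locus is not available by the bookkeeping you describe. Moreover, you explicitly defer ``turning the combinatorial torus-orbit matching of the weak glueing condition into the algebraic existence of a line bundle $\mathscr L$ realizing condition (II)'' as the main obstacle --- but that is precisely the step that needs to be carried out (and is, in the paper, by the glueing-parameter construction above); as written your argument has a genuine gap exactly there. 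The ``moreover'' clause via scaling $\bm n\mapsto m\bm n$ and Proposition \ref{weak glueing well behaved}(2) is fine at the level of detail the paper itself gives.
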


\begin{lem}\label{weak glueing smoothing} 
Let $(X_0,\bm n)$ be as in Theorem \ref{general edge length smoothing} and $d\leq d'$. Then a pre-limit linear series $(\mathscr L_v, V_v)_v$ with respect to $(w_v)_v$ lifts to a limit linear series if and only if $(\mathscr L_v, V_v)_v$ satisfies the weak glueing condition.
\end{lem}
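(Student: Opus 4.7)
The forward direction is immediate from Proposition \ref{lls weak}: any pre-limit linear series lifting to a limit linear series satisfies the weak glueing condition by Remark \ref{def of lls}. So the content is the converse.

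For the converse, my strategy is a dimension-counting argument built on Theorem \ref{theorem of dimension}. Let $\mathscr G := G^r_{w_0}(X_0,\bm n)$ denote the space of limit linear series of multidegree $w_0$ on $(X_0,\bm n)$, which by hypothesis has the expected dimension $\rho$. The natural forgetful map
\[
\pi\colon \mathscr G \longrightarrow P^r_d(X_0,\bm n,(w_v)_v), \qquad (\mathscr L,(V_v)_v)\longmapsto (\mathscr L^v,V_v)_v,
\]
has image contained in the weak glueing locus $\mathscr P^{\mathrm{wg}} \subset P^r_d(X_0,\bm n,(w_v)_v)$ by Proposition \ref{lls weak}, and the goal becomes proving surjectivity onto $\mathscr P^{\mathrm{wg}}$.

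The fiber $\pi^{-1}((\mathscr L_v,V_v)_v)$ is cut out of the torus $T$ of line bundles $\mathscr L$ on $\widetilde X_0$ of multidegree $w_0$ with $\mathscr L^v\cong \mathscr L_v$ by the strong glueing condition (II) of Definition \ref{limit linear series on curves}. One computes $\dim T = b_1(\widetilde G)=\sum_{e\in E(\overline G)}(k_e-1)$, where $k_e$ is the number of edges of $G$ over $e$; condition (I) of Theorem \ref{theorem of dimension} bounds each $k_e - 1$ by $2$. Varying $\mathscr L$ through $T$ rescales the glueing isomorphisms $\varphi_j^{e,v}$ of Proposition \ref{glueing isomorphism} independently at each $e \in E(\overline G)$, and the weak glueing condition is designed to be precisely the torus-invariant shadow of the strong glueing condition.

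The core local claim is therefore: at each edge $e\in E(\overline G)$, the weak glueing condition cuts out in the per-edge torus a nonempty subvariety of the expected codimension, and the dimensions globally fit together so that $\dim \pi^{-1}(\mathscr P^{\mathrm{wg}}) = \rho$. Granting this, upper semicontinuity of fiber dimension combined with $\dim \mathscr G = \rho$ forces $\pi$ to surject onto $\mathscr P^{\mathrm{wg}}$. The main obstacle is the local claim itself: weak glueing merely asserts that the subspaces $W_v$, $W_{v'}$ of Definition \ref{weak glueing condition} lie in matched torus orbits, whereas strong glueing requires a single torus element in $T$ identifying $W_v$ with $W_{v'}$ simultaneously for every critical $j$. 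Condition (I) is essential here: for $k_e = 1$ there is no glueing freedom and the weak condition is vacuous; for $k_e = 2$ the per-edge torus is one-dimensional, and a single constraint pins $\mathscr L$ down uniquely; for $k_e = 3$ the argument reduces to a short case analysis organized by the number of critical indices with $g_j = 1$, using the explicit reformulation of the weak glueing condition in terms of vanishing supports given in the example following Definition \ref{weak glueing condition}.
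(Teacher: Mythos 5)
The forward direction is fine and matches the paper (Proposition \ref{lls weak}). The problem is the converse, and your proposal does not actually prove it. First, the dimension-count wrapper does no logical work: if you already grant that the fiber of $\pi$ over every point of $\mathscr P^{\mathrm{wg}}$ is nonempty, surjectivity is immediate and semicontinuity is irrelevant; if you do not grant it, then ``$\dim \mathscr G=\rho$ plus expected codimensions'' cannot force surjectivity onto a prescribed locus --- a map between spaces of equal dimension can miss points, and nothing you invoke rules that out. (Note also a conflation in the statement of your ``core local claim'': the weak glueing condition is a condition on $(\mathscr L_v,V_v)_v$, not on the glueing torus; what is cut out in the per-edge torus is the strong condition (II) of Definition \ref{limit linear series on curves}, and the issue is whether weak glueing forces that locus to be nonempty.) So everything reduces to the fiber-nonemptiness claim, which you explicitly label ``the main obstacle'' and then only gesture at.

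That claim is the entire content of the lemma, and proving it requires inputs you never use, namely the structure of the twisting divisors extracted from the proof of Theorem \ref{theorem of dimension}: for each $e\in E(\overline G)$, either (a) at most one critical index $j$ has $\deg D_{j+1}^{e,v}-\deg D_j^{e,v}=3$ and all other jumps are $\leq 1$, or (b) at most two critical indices have jump $2$, and when there are two, the supports of $D_{j_1+1}^{e,v}-D_{j_1}^{e,v}$ and $D_{j_2+1}^{e,v}-D_{j_2}^{e,v}$ overlap in only one point. With these facts, the paper checks that each nontrivial matching condition (jump $3$ with $g_j\leq 2$, or jump $2$ with $g_j=1$) imposes codimension at most $2$, resp.\ at most $1$, on the $c$-dimensional glueing torus --- the $g_j=2$ case being handled by choosing a basis of $W_v$ whose elements vanish at distinct points, so that each basis element contributes at most one condition --- and the bookkeeping shows the total never exceeds $c=k_e-1$, with weak glueing guaranteeing each individual condition is satisfiable. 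Your case split ``by the number of critical indices with $g_j=1$'' omits exactly this: without (a)/(b), nothing prevents, say, two independent jump-$2$, $g_j=1$ constraints over an edge with only two edges of $G$ above it ($c=1$), which would overdetermine the torus; it is condition (II) of Theorem \ref{theorem of dimension}, via its proof, that excludes such configurations. As it stands your argument is a framework (essentially the same per-edge torus-adjustment framework as the paper's) with the decisive step missing.
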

\begin{proof}
By proposition \ref{lls weak} it remains to prove the ``if" part. Take any line bundle $\mathscr L$ on $\widetilde X_0$ of multidegree $w_0$ such that $\mathscr L^v=\mathscr L_v$. For any adjacent vertices $v,v'\in V(G)$ connected by $e\in E(\overline G)$, according to the proof of Theorem \ref{theorem of dimension} we have either one of the following:

$(a)$ there is at most one $j$ such that $\deg D_{j+1}^{e,v}-\deg D_j^{e,v}=3$ while $\deg D_{j'+1}^{e,v}-\deg D_{j'}^{e,v}\leq 1$ for $j'\neq j$ and $0\leq j'\leq b_{v,v'}$;

$(b)$ there are at most two numbers $\{j_i\}_{1\leq i\leq m}$ where $0\leq j_i\leq b_{v,v'}$ and $m\leq 2$ such that $\deg D_{j_i}^{e,v}-\deg D_{j_i-1}^{e,v}=2$, and when $m=2$ the supports of $D_{j_1+1}^{e,v} -D_{j_1}^{e,v}$ and $D_{j_2+1}^{e,v} -D_{j_2}^{e,v}$ only overlap at one point.

For any given enriched structure $(\mathscr O_v)_v$ we check that condition (2) of Definition \ref{limit linear series on curves} will be satisfied after adjusting the glueing data of $\mathscr L$ along the edges over $e$, which has dimension $c=\#\{e'|e'\in E(G)\ \mathrm{lies\ over\ } e\}-1$. 
The nontrivial cases is when $\deg D_{j+1}^{e,v}-\deg D_j^{e,v}=3$ and $g_j\leq 2$, which impose codimension at most $2$ (when $g_j=2$ we can take basis of $V(-D_{j}^{e,v})/ V(-D_{j+1}^{e,v})$ that vanish at different points, each of which impose codimension at most one); or when $\deg D_{j+1}^{e,v}-\deg D_j^{e,v}=2$ and $g_j=1$, which impose codimension at most $1$. Now the first case happens only if $c= 2$ and the second case happens when $m=1$ and $c\geq 1$ or $m=2$ and $c= 2$.
\end{proof}

\begin{proof}[Proof of Theorem \ref{general edge length smoothing}]
We have $\mathrm{(1)}\Rightarrow \mathrm{(2)}$ according to Theorem \ref{smoothness implies weak glueing}. Lemma \ref{weak glueing smoothing} and Theorem \ref{theorem of dimension} and the smoothing theorem (Theorem \ref{a smoothing theorem}) together shows $\mathrm{(2)}\Rightarrow \mathrm{(3)}$. And $\mathrm{(3)}\Rightarrow\mathrm{(1)}$ is Lemma \ref{compatibleness between smoothing}.

\end{proof}

When $\rho=0$, the weak glueing condition in Theorem \ref{general edge length smoothing} is trivial, since according to the proof of Theorem \ref{theorem of dimension} the space of pre-limit $\mathfrak g^r_d$s with extra vanishing has dimension strictly less that $\rho$, which is empty. As a result we have:

\begin{cor}\label{rho equals zero}
Let $(X_0,\bm n)$ be as in Theorem \ref{general edge length smoothing}, and $(\mathcal D,\mathcal H)$ a limit $\mathfrak g^r_d$ of multidegree $w_0$ such that $d\leq d'$ and $\rho=0$, then $(\mathcal D,\mathcal H)$ is smoothable.
\end{cor}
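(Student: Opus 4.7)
The plan is to apply the equivalence $(1)\Leftrightarrow(2)$ of Theorem \ref{general edge length smoothing}: it suffices to verify that an arbitrary limit linear series $(\mathcal D,\mathcal H)$ of the given type satisfies the weak glueing condition. Using Corollary \ref{isomorphism for tight tuple}, I would fix a tight tuple $(w_v)_v$ in $V(G(w_0))$ (for instance $(w_v^{\mathrm{red}})_v$) and let $(\mathscr L_v,V_v)_v=\mathfrak F_{(w_v)_v}^{-1}((\mathcal D,\mathcal H))$ be the associated pre-limit $\mathfrak g^r_d$ on $(X_0,\bm n)$. By construction, $(\mathscr L_v,V_v)_v$ lies in some stratum of $P^r_d(X_0,\bm n,(w_v)_v)$ determined by a choice of admissible multivanishing data $(a_\bullet^{e,v})_{e,v}$ and corresponding dimensions of graded pieces.

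Next, I would invoke the dimension analysis underlying Theorem \ref{theorem of dimension}. For each adjacent pair $(e,v)$ in $\overline G$ and each critical $j$ for $D_\bullet^{e,v}$, that analysis decomposes the moduli space according to the multivanishing data and expresses the expected dimension of each stratum as $\rho$ minus a nonnegative correction. This correction is strictly positive as soon as some critical $j$ fails to achieve the maximal value $g_j=\deg D_{j+1}^{e,v}-\deg D_j^{e,v}$ (i.e., as soon as one of the multivanishing inequalities in condition (I) of Definition \ref{limit linear series on curves} is strict, or some graded piece $V_v(-D_j^{e,v})/V_v(-D_{j+1}^{e,v})$ fails to saturate its ambient dimension). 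Under the hypothesis $\rho=0$, any such deficit would force the expected dimension to be strictly negative, rendering the stratum empty and contradicting the existence of $(\mathscr L_v,V_v)_v$. Consequently $g_j$ attains the degree jump $\deg D_{j+1}^{e,v}-\deg D_j^{e,v}$ for every critical $j$ and every adjacent pair $(e,v)$.

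Finally, as recorded in the example following Remark \ref{weak glueing remark}, whenever $g_j=\deg D_{j+1}^{e,v}-\deg D_j^{e,v}$, the weak glueing condition is automatic: one simply takes the full graded pieces
\[
W_v=\mathscr L_v(-D_j^{e,v})/\mathscr L_v(-D_{j+1}^{e,v}),\qquad W_{v'}=\mathscr L_{v'}(-D_{b_{v,v'}-j}^{e,v'})/\mathscr L_{v'}(-D_{b_{v,v'}-j+1}^{e,v'}),
\]
whereupon the torus-orbit compatibility in Definition \ref{weak glueing condition} is vacuously satisfied. Therefore $(\mathscr L_v,V_v)_v$ satisfies the weak glueing condition, and by Theorem \ref{general edge length smoothing} (together with Definition \ref{weak glueing revised}) $(\mathcal D,\mathcal H)$ is smoothable.

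The main obstacle is extracting cleanly from the proof of Theorem \ref{theorem of dimension} (equivalently, \cite[Corollary 5.2]{osserman2014dimension} and the material it rests on) the precise codimension accounting that each deficit $g_j<\deg D_{j+1}^{e,v}-\deg D_j^{e,v}$ contributes at least one to the codimension of the corresponding stratum. This is the content of the strongly Brill--Noether general hypothesis combined with the glueing bookkeeping already carried out in \emph{loc.~cit.}, so no new ingredient beyond a careful reading of that dimension formula is required.
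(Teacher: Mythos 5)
Your overall route is the same as the paper's: reduce via Theorem \ref{general edge length smoothing} to checking the weak glueing condition, and argue that when $\rho=0$ the dimension analysis behind Theorem \ref{theorem of dimension} empties out all strata of pre-limit series on which that condition could fail, so that it holds automatically. However, your intermediate claim is wrong as stated. You assert that any failure of $g_j$ to reach the maximal value $\deg D_{j+1}^{e,v}-\deg D_j^{e,v}$ (in particular any graded piece $V_v(-D_j^{e,v})/V_v(-D_{j+1}^{e,v})$ of less than full dimension) contributes positive codimension, and you conclude that $g_j$ equals the degree jump for \emph{every} critical $j$. That cannot be right: whenever the number of critical indices for $D_\bullet^{e,v}$ exceeds $r+1$ (the typical situation, e.g.\ when the jumps have size one and $b_{v,v'}>r$), some critical $j$ necessarily has $\dim V_v(-D_j^{e,v})/V_v(-D_{j+1}^{e,v})=0$, hence $g_j=0$ strictly below the jump; this is the generic behaviour and carries no dimension penalty, and your argument, taken literally, would show that no pre-limit linear series exists at all when $\rho=0$.

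The repair is exactly what the paper does: the weak glueing condition is automatic not only when $g_j$ is maximal (your Example citation) but also when $g_j=0$ or when the degree jump is $1$, so the only configurations that need to be excluded are those with $0<g_j<\deg D_{j+1}^{e,v}-\deg D_j^{e,v}$. These are the ``extra vanishing'' strata, and the proof of Theorem \ref{theorem of dimension} (i.e.\ the dimension count of \cite{osserman2014dimension}) shows that they have dimension strictly less than $\rho$, hence are empty when $\rho=0$. With the codimension accounting restricted to those strata, the rest of your argument (pass to $\mathfrak F_{(w_v)_v}^{-1}(\mathcal D,\mathcal H)$, conclude the weak glueing condition, then apply the equivalence $(1)\Leftrightarrow(2)$ of Theorem \ref{general edge length smoothing}) goes through and coincides with the paper's proof.
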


\begin{ex} \label{example}
We give an example of a limit linear series on a metrized complex that does not satisfy the weak glueing condition, hence not smoothable. Let $X_0$ be a nodal curve obtained by glueing two copies of $\mathbb P^1_\kappa$ along two points. Let $V(G)=\{v,v'\}$ and $E(G)=\{e_1,e_2\}$. Let $\bm n$ be the chain structure such that $\bm n(e_1)=2$ and $\bm n(e_2)=1.$ Let $\mathcal A_{v}=\{P,Q\}$ and $A_{v'}=\{P',Q'\}$. The metrized complex is as below, where $\widetilde P$ is the midpoint of $e_1$.

$$
\begin{tikzpicture}
\draw (0,0) circle (1.5cm);
\draw (-1.5,0) arc (180:360:1.5cm and 0.6cm);
\draw[dashed] (-1.5,0) arc (180:0:1.5cm and 0.6cm);
\draw (1.3,-0.75) arc (210:330:2.5cm and 2.5cm);
\draw (1.3,0.75) arc (210:-30:2.5cm and 2.5cm);
\draw (6.93,0) circle (1.5cm);
\draw (5.43,0) arc (180:360:1.5cm and 0.6cm);
\draw[dashed] (5.43,0) arc (180:0:1.5cm and 0.6cm);
\draw (1*360/12: 1.5cm) node[circle, fill=black, scale=0.3, label=above:{P}]{};
\draw (4*360/12: 1.5cm) node[circle, fill=black, scale=0.3, label=left:{R}]{};
\draw (11*360/12: 1.5cm) node[circle, fill=black, scale=0.3, label=below:{Q}]{};
\draw (5*360/12: 1.5cm)+(6.93,0) node[circle, fill=black, scale=0.3, label=above:{$P^\prime$}]{};
\draw (9.5*360/12: 1.5cm)+(6.93,0) node[circle, fill=black, scale=0.3, label=above:{$R^\prime$}]{};
\draw (3*360/12: 2.5cm)+(3.464,2) node[circle, fill=black, scale=0.3, label=above:{$\widetilde P$}]{};
\draw (7*360/12: 1.5cm)+(6.93,0) node[circle, fill=black, scale=0.3, label=below:{$Q^\prime$}]{};
\end{tikzpicture} 
$$

Consider $(\mathcal D,\mathcal H)$ such that $\mathcal D_{v}=2R$ and $\mathcal D_\Gamma=2v$ and $\mathcal D_{v'}=0$, and $H_{v}=\langle f_0,f_1\rangle$ and $H_{v'}=\langle f_0',f_1'\rangle$ where $\mathrm{div}(f_0)=P-R$, $\mathrm{div}(f_1)=P+Q-2R$, $\mathrm{div}(f_0')=R'-Q'$ and $\mathrm{div}(f_1')=0$ (namely $f_1'$ is a nonzero constant function) for some $R\in Z_v\backslash \{P,Q\}$ and $R'\in Z_{v'}\backslash \{P',Q'\}$. Choose admissible multidegrees $w_0$ and $(w_v)_v$ such that $D_{w_0}=D_{w_v}=2v$ and $D_{w_{v'}}=\widetilde P+v'$. Straightforward calculation shows that $D_v=D^v_{w_0,w_v}=0$ and $D_{v'}=D^{v'}_{w_0,w_{v'}}=Q'$. It follows that $b_{v,v'}=1$, and $f_0,f_1\in H^0(Z_v,\mathscr O_{Z_v}(\mathcal D_v+D_v))$ and $f_0',f_1'\in H^0(Z_{v'},\mathscr O_{Z_{v'}}(\mathcal D_{v'}+D_{v'}))$. The tuple  
$$((\mathscr O_{Z_v}(\mathcal D_v+D_v),H_v),(\mathscr O_{Z_{v'}}(\mathcal D_{v'}+D_{v'}),H_{v'}))=((\mathscr O_{Z_v}(2R),H_v),(\mathscr O_{Z_{v'}}(Q'),H_{v'})$$
is a pre-limit linear series on $(X_0,\bm n)$ with respect to $(w_v,w_{v'})$ (by the calculation below) that maps to $(\mathcal D,\mathcal H)$ under $\mathfrak F$. Hence $(\mathcal D,\mathcal H)$ is a limit linear series on $\mathfrak C_{X_0,\bm n}$.

Now we calculate the twisting divisors:
$$D_i^{e,v}=0,P+Q,P+2Q,2P+3Q,\cdots$$ 
$$D_i^{e,v'}=0,Q',P'+2Q',P'+3Q',\cdots$$

We have $\mathrm{div}^0(f_0)|_{\{P,Q\}}=P$ and $\mathrm{div}^0(f_1)|_{\{P,Q\}}=P+Q$ and $\mathrm{div}^0(f'_0)|_{\{P',Q'\}}=0$ and $\mathrm{div}^0(f_1')|_{\{P',Q'\}}=Q'$, which satisfies the condition for pre-limit linear series. It is easy to check that weak glueing condition is not satisfied for $j=0$, since $\mathrm{div}^0(f_0)|_{\{P,Q\}}-D_0^{e,v}=P$ but $\mathrm{div}^0(f_1')|_{\{P',Q'\}}-D_1^{e,v'}=0$. Thus $(\mathcal D,\mathcal H)$ is not smoothable. 
\end{ex} 
\ 

Note that the chain structure $\bm n$ and $X_0$ in the above example satisfies the conditions in Theorem \ref{theorem of dimension}. One can also construct non-smoothable limit linear series on $\mathfrak C_{X_0,\bm n}$ with $\bm n(e_1)=\bm n(e_2)=1$.

 \section{Lifting divisors on metric graphs} 
Let $ X'\rightarrow \mathrm{Spec}(R)$ be a regular smoothing family with generic fiber $X$. Let $G'$ be the dual graph with associated metric graph $\Gamma'$ where each edge is assigned length 1. Matthew Baker \cite{baker2008specialization} constructed a specialization map $\tau\colon \mathrm{Div}(X_{\overline K})\rightarrow \mathrm{Div}_{\mathbb Q}(\Gamma')$, where $\overline K$ denotes the algebraic closure of $K$ and $\mathrm{Div}_{\mathbb Q}(\Gamma')$ is the set of rational divisors on $\Gamma'$, with the property that $r(D)\leq r(\tau(D))$ for all $D\in \mathrm{Div}(X_{\overline K})$. The question of whether  a divisor $D\in \mathrm{Div}_{\mathbb Q}(\Gamma')$ of rank $r(D)$ lifts to a divisor in $X_{\overline K}$ of the same rank is not completely solved. A survey for partial results can be find in \cite{baker2016degeneration}. The case when $\Gamma'$ is a chain of loops with generic edge length is proved liftable in \cite{cartwright2014lifting} by a thoroughly examination of divisors on $\Gamma'$. In this section we provide a different approach of the lifting problem, via the smoothing properties of (pre-)limit linear series on curves with chain structure, for an enlarged scale of $\Gamma'$ and certain divisors on $\Gamma'$. Note that we do not require $\kappa$ to be algebraically closed, and both Definition \ref{pre-limit linear series} and Definition \ref{weak glueing condition} remain valid as well as the conclusions we use in this section (Lemma \ref{weak glueing smoothing}, Theorem \ref{a smoothing theorem} and Theorem \ref{theorem of dimension}).

Let $X_0,G,\bm n,w_0,(w_v)_v, \Gamma,\widetilde X_0$ and $\widetilde G$ be as in Notation \ref{notation}. 
 Let $(w_v^{\mathrm{red}})_v$ be as in Theorem \ref{lifting theorem}. 
\begin{thm}\label{lifting divisors chain of double loops}
Let $X$ be a smooth curve of genus $g$ over $K$. Suppose there is a regular smoothing family $ X'\rightarrow \mathrm{Spec}(R)$ with generic fiber $X$ and special fiber $\widetilde X_0$. Suppose further that $\widetilde X_0$ only has rational components, and $(X_0,\bm n)$ is as in Theorem \ref{theorem of dimension} with $d'=\min\{2g-2,g+1\}$ and that $\overline G$ is a chain. Then every rational divisor class $D$ on $\Gamma$ such that $r(D)\leq 1$ lifts to a divisor class of the same rank on $X_{\overline K}$.
\end{thm}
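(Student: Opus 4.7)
The plan is to translate the lifting problem into a smoothing question for (pre-)limit linear series on $(X_0,\bm n)$, via the machinery of Sections~3 and~4. The cases $r(D)\in\{-1,0\}$ are routine: any effective representative $D'$ of $D$ on $\Gamma$ lifts pointwise through the regular smoothing family $X'$ to an effective $D_\eta$ on $X_{\overline K}$ with $\tau(D_\eta)\sim D$, and Baker's specialization inequality bounds $r(D_\eta)\le r(D)$; combined with $r(D_\eta)\ge 0$ for $r(D)=0$ this pins down the rank, and for $r(D)=-1$ any lift suffices. The substantive case is $r(D)=1$. By Proposition~\ref{weak glueing well behaved}(2), I scale $\bm n$ so that $D$ is integral, and by replacing $D$ by a linearly equivalent edge-reduced representative I set $D=D_{w_0}$ for an admissible multidegree $w_0$ of degree $d\le d'$.

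The central step is to construct a pre-limit $\mathfrak g^1_d$ on $(X_0,\bm n)$ with respect to $(w_v^{\mathrm{red}})_v$ that satisfies the weak glueing condition. Since $\overline G$ is a chain and each $Z_v\cong\mathbb P^1$, at each vertex $v$ a $2$-dimensional subspace $V_v\subset H^0(Z_v,\mathscr L_v)$ is specified by two prescribed vanishing patterns at the marked points $\mathcal A_v$. The rank-$1$ hypothesis on $D$ provides, for every pair of points $P,Q\in\Gamma$, a piecewise linear $\mathfrak f_\Gamma\in\mathrm{Rat}(\Gamma)$ with $D+\mathrm{div}(\mathfrak f_\Gamma)-P-Q\ge 0$; the slopes of such functions at nodes of $\mathcal A_v$ dictate the vanishing data that realizes the multivanishing condition (condition (I) of Definition~\ref{limit linear series on curves}). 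I build the $V_v$ inductively along the chain $\overline G$, matching slope patterns across each edge.

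The main obstacle is verifying the weak glueing condition (Definition~\ref{weak glueing condition}) for the tuple so constructed. For each edge $e\in E(\overline G)$ between vertices $v,v'$ and each critical $j$, Remark~\ref{weak glueing remark} reduces the check to matching torus-orbit supports of $V_v(-D_j^{e,v})/V_v(-D_{j+1}^{e,v})$ and $V_{v'}(-D_{b_{v,v'}-j}^{e,v'})/V_{v'}(-D_{b_{v,v'}-j+1}^{e,v'})$. Since at most three edges of $G$ lie over $e$, one has $g_j\in\{0,1,2,3\}$, and the extreme cases $g_j=0$ and $g_j=\deg D_{j+1}^{e,v}-\deg D_j^{e,v}$ are automatic. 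In the remaining small cases, compatibility of vanishing patterns across $e$ should follow from the fact that the same globally defined $\mathfrak f_\Gamma\in\mathrm{Rat}(\Gamma)$ governs the construction of both $V_v$ and $V_{v'}$, so the slopes at the two ends of $e$ agree and force matching vanishing multiplicities; a careful case analysis on the partition of $\bm n(e_i)$ over $e$ is still required here, and this is where the bulk of technical work resides.

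Once the weak glueing condition is verified, Lemma~\ref{weak glueing smoothing} lifts the pre-limit linear series to a limit linear series on $(X_0,\bm n)$. By Theorem~\ref{theorem of dimension} the moduli space has expected dimension, so Theorem~\ref{a smoothing theorem} smooths this to a $\mathfrak g^1_d$ on $X_\eta=X$, yielding a divisor class $D_\eta$ on $X_{\overline K}$ with $\tau(D_\eta)\sim D$ and $r(D_\eta)\ge 1$. Baker's specialization inequality gives $r(D_\eta)\le r(\tau(D_\eta))=r(D)=1$, so $r(D_\eta)=r(D)$, completing the proof.
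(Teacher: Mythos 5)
Your overall frame (reduce to $r(D)=1$, produce a pre-limit $\mathfrak g^1_d$ on $(X_0,\bm n)$ with respect to $(w_v^{\mathrm{red}})_v$ satisfying the weak glueing condition, lift via Lemma \ref{weak glueing smoothing}, smooth via Theorems \ref{theorem of dimension} and \ref{a smoothing theorem}, and finish with Baker's specialization inequality) is the same as the paper's. But the proposal has a genuine gap exactly where the proof lives: you never actually construct the spaces $V_v$ or verify the weak glueing condition, and you say yourself that ``a careful case analysis \dots is still required here, and this is where the bulk of technical work resides.'' That case analysis \emph{is} the proof. The paper's construction does not use the rank-one hypothesis on $D$ to produce the $V_v$ at all: since every $Z_i\cong\mathbb P^1$ and $\overline G$ is a chain, one takes $f_i^0=1$ and chooses $f_i^1$ with vanishing orders at the nodes prescribed by the twisting divisors (e.g.\ $\mathrm{ord}_{P^{v_i}_{e_i^j}}(f_i^1)=-\mathrm{ord}_{P^{v_i}_{e_i^j}}(D^{e_i,v_i}_{b_{v_{i-1},v_i}})$ when $b_{v_{i-1},v_i}>0$, and with pairwise distinct nonzero values at the nodes over edges with $b=0$), together with a suitable $\mathcal D_i\supseteq D^{e_i,v_i}_{b_{v_{i-1},v_i}}$; the multivanishing condition and the weak glueing condition are then checked directly from these explicit choices, case by case in the vanishing/non-vanishing of $b_{v_{i-1},v_i}$ and $b_{v_i,v_{i+1}}$. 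Your alternative plan --- extracting vanishing data from piecewise linear functions $\mathfrak f_\Gamma$ witnessing $r(D)=1$ for varying pairs $(P,Q)$ --- is not carried out, and it is not clear it yields a fixed two-dimensional $V_v$ per component with the required multivanishing and compatible torus-orbit behaviour, since the witnesses $\mathfrak f_\Gamma$ depend on $(P,Q)$.

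A second, smaller gap is the degree bookkeeping. You simply assert an edge-reduced representative ``of degree $d\le d'$,'' but $d=\deg D$ is fixed and $d'=\min\{2g-2,g+1\}$; the paper uses the tropical Riemann--Roch theorem to get $\deg D\le g+1$ from $r(D)=1$, applies the limit-linear-series machinery only when $\deg D\le 2g-2$, and disposes of the remaining range $2g-2<\deg D\le g+1$ separately by Riemann--Roch on the curve and on the graph (any lift then automatically has the same rank). Without this, Lemma \ref{weak glueing smoothing} and Theorem \ref{theorem of dimension} (which require $d\le d'$) do not apply. Also, making $D$ integral is achieved in the paper by a finite base field extension (which refines the model), not merely by invoking Proposition \ref{weak glueing well behaved}(2) to rescale $\bm n$ while keeping the same family $X'$; your reduction step should be phrased accordingly. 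The treatment of $r(D)\in\{-1,0\}$ and the final rank comparison are fine and agree with the paper.
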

\begin{proof}
After a finite base field exchange we may assume that $D$ is a integral and edge-reduced divisor of multidegree $\bm w_0$. We show that $D$ can be lifted to a pre-limit linear series on $(X_0,\bm n)$ with respect to $(w_v^{\mathrm{red}})_v$ which lifts to a limit linear series with respect to the induced enriched structure on $\widetilde X_0$. This limit linear series is smoothable, and the corresponding $\mathfrak g^r_d$ on $X$ has underlying divisor specializing to $D$ (up to linear equivalence).

Let $v_0, v_1,...,v_m$ be the vertices of $G$ as in the following graph and $e_i$ be the edge of $\overline G$ that connects $v_{i-1}$ and $v_i$. Let $e_i^{1},...,e_i^{a_i}$ be the edges in $G$ over $e_i$ where $1\leq a_i\leq 3$. We may assume for simplicity that $w_0=w_{v_0}^{\mathrm{red}}$ and hence it is reasonable to denote $w_i=w_{v_i}^{\mathrm{red}}$.
 
$$\begin{tikzpicture}[scale=0.75]
\draw (-1,0) arc (240:-60:2cm and 2cm);
\draw (-1,0) arc (-120:-60:2cm and 2cm);
\draw (1,0) arc (120:60:3cm and 3cm);
\draw (1,0) arc (-240:60:3cm and 3cm);
\draw (1,0) arc (-180:0:1.5cm and 1.5cm);
\draw (5,0) arc (240:-60:1.5cm and 1.5cm);
\draw (5,0) arc (-120:-60:1.5cm and 1.5cm);
\draw (5,0) arc (180:0:0.75cm and 0.75cm);
\draw (4.3,0) node[circle, fill=black, scale=0.15]{};
\draw (4.5,0)node[circle, fill=black, scale=0.15]{};
\draw (4.7,0)node[circle, fill=black, scale=0.15]{};
\draw (-1,0) node[circle, fill=black, scale=0.3, label=above:{$v_0$}]{};
\draw (1,0) node[circle, fill=black, scale=0.3, label=above:{$v_1$}]{};
\draw (4,0) node[circle, fill=black, scale=0.3, label=above:{$v_2$}]{};
\draw (5,0) node[circle, fill=black, scale=0.3, label=below:{$v_{m-1}$}]{};
\draw (6.5,0) node[circle, fill=black, scale=0.3, label=below:{$v_m$}]{};
\end{tikzpicture}
$$

Note that
$b_{v_{i-1},v_i}$ is is the maximal number $n$ such that the admissible multidegree obtained from $w_{i-1}$ (resp. $w_i$) by twisting $n$ times at $(e_{i},v_{i-1})$ (resp. $(e_{i},v_i)$) is nonnegative at all $v\in V(G)$. Let $Z_0,...,Z_m$ be the (rational) components of $X_0$ corresponding to $v_0,...,v_m$. Let $d_i=\deg(D_{w_i}|_{v_i})$. Since $\tau$ is surjective (\cite[\S 2.3]{baker2008specialization}) we may assume $r(D)=1$. Denote the the pre-limit linear series that we want to construct by $(\mathscr L_i,V_i)_{i}=(\mathscr O_{Z_i}(\mathcal D_i),\mathrm{span}(f_i^0,f_i^1))$. 


First take $\mathcal D_0=P_0^1+\cdots+P_0^{d_0}$ with $P_0^j\in Z_0\backslash \mathcal A_{e_1}^{v_0}$. If $b_{v_0,v_1}>0$,  let $f_0^0=1$ and take $f_0^1$ such that
 that $\mathrm{ord}_{P_{e^j_1}^{v_0}}(f_0^1 )=\mathrm{ord}_{P^{v_0}_{e_1^j}}(D_{b_{v_0,v_1}}^{e_1,v_0})$.
 If $b_{v_0,v_1}=0$ let $f_0^0=1$ and take $f_0^1$ such that $f_0^1$ has no zeros or poles at $\{P_{e^j_1}^{v_0}\}_{j}$ and that $\Pi_{j\neq k}(f_0^1(P_{e^j_1}^{v_0})-f_0^1(P_{e^k_1}^{v_0}))\neq 0$. Next assume $(\mathscr L_{i-1}, V_{i-1})$ is given. Set $f_i^0=1$.

If $b_{v_{i-1},v_i}=b_{v_i,v_{i+1}}=0$ we take $\mathcal D_{i}=P_{i}^1+\cdots+P_{i}^{d_{i}}$ with $P_{i}^j\in Z_i\backslash(\mathcal A_{e_i}^{v_i}\cup \mathcal A_{e_{i+1}}^{v_i})$ general.
Take $f_i^1$ such that $f_i^1$ has no zeros or poles at $\mathcal A_{e_i}^{v_i}\cup \mathcal A_{e_{i+1}}^{v_i}$
and that
$$\Pi_{j\neq k}(f_i^1(P_{e^j_{i}}^{v_i})-f_i^1(P_{e^k_{i}}^{v_i}))\cdot \Pi_{j\neq k}(f_i^1(P_{e^j_{i+1}}^{v_i})-f_i^1(P_{e^k_{i+1}}^{v_i}))\neq 0.$$ 

If $b_{v_i,v_{i+1}}=0$ and $b_{{v_{i-1},v_i}}> 0$ we take $D_i$ as above.
Take $f_i^1$ such that $f_i^1$ has no zeros or poles at $\mathcal A_{e_{i+1}}^{v_i}$ and that $\Pi_{j\neq k}(f_i^1(P_{e^j_{i+1}}^{v_i})-f_i^1(P_{e^k_{i+1}}^{v_i}))\neq 0$ and that $\mathrm{ord}_{P_{e^j_i}^{v_i}}(f_i^1)=\mathrm{ord}_{P_{e^j_i}^{v_i}}(D_{b_{v_{i-1},v_i}}^{e_i,v_{i}})$.

If $b_{v_i,v_{i+1}}>0$ and $b_{{v_{i-1},v_i}}=0$ we take $ D_i$ as above. 
Take and $f_i^1$ such that $f_i^1$ has no zeros or poles at $\mathcal A_{e_i}^{v_i}$ and that $\Pi_{j\neq k}(f_i^1(P_{e^j_{i}}^{v_i})-f_i^1(P_{e^k_{i}}^{v_i}))\neq 0$ and that $\mathrm{ord}_{P_{e^j_{i+1}}^{v_i}}(f_i^1)=\mathrm{ord}_{P_{e^j_{i+1}}^{v_i}}(D_{b_{v_{i},v_{i+1}}}^{e_{i+1},v_{i}}).$


If $b_{v_i,v_{i+1}}>0$ and $b_{{v_{i-1},v_i}}>0$ 
we take $\mathcal D_i=\sum_j \mathrm{ord}_{P_{e^j_i}^{v_i}}(D^{e_i,v_i}_{b_{v_{i-1},v_i}})P_{e^j_i}^{v_i}+\mathcal D'=D^{e_i,v_i}_{b_{v_{i-1},v_i}}+\mathcal D'$ with $\mathcal D'$ an effective divisor supported on $Z_i\backslash(\mathcal A_{e_i}^{v_i}\cup \mathcal A_{e_{i+1}}^{v_i})$ such that $\deg \mathcal D_i=d_i$. Take $f_i^1$ such that $\mathrm{ord}_{P_{e^j_i}^{v_i}}(f_i^1)=-\mathrm{ord}_{P_{e^j_i}^{v_i}}(D^{e_i,v_i}_{b_{v_{i-1},v_i}})$ and $\mathrm{ord}_{P_{e^j_{i+1}}^{v_i}}(f_i^1)=\mathrm{ord}_{P_{e^j_{i+1}}^{v_i}}(D^{e_{i+1},v_i}_{b_{v_{i},v_{i+1}}}).$

Now let $1\leq i\leq g$. If $b_{v_{i-1},v_{i}}>0$ then $\mathrm{ord}_{D_\bullet^{e_{i},v_{i-1}}}f_{i-1}^0=\deg D_0^{e_{i},v_{i-1}}$ and $\mathrm{ord}_{D_\bullet^{e_{i},v_{i-1}}}f_{i-1}^1=\deg D_{b_{v_{i-1},v_{i}}}^{e_{i},v_{i-1}}$; we also have
$\mathrm{ord}_{D_\bullet^{e_{i},v_{i}}}f_i^0=\deg D_{b_{v_{i-1},v_{i}}}^{e_{i},v_{i}}$ and $\mathrm{ord}_{D_\bullet^{e_{i},v_{i}}}f_i^1=\deg D_{0}^{e_{i},v_{i}}$ when $b_{v_i,v_{i+1}}>0$ while $\mathrm{ord}_{D_\bullet^{e_{i},v_{i}}}f_i^1=\deg D_{b_{v_{i-1},v_{i}}}^{e_{i},v_{i}}$ and $\mathrm{ord}_{D_\bullet^{e_{i},v_{i}}}f_i^0=\deg D_{0}^{e_{i},v_{i}}$ when $b_{v_i,v_{i+1}}=0$.
 Note that $0$ and $b_{v_{i-1},v_i}$ are critical for both $D_\bullet^{e_{i},v_{i-1}}$ and $D_\bullet^{e_{i},v_{i}}$, hence $(\mathscr L_i,V_i)_i$ is a pre-limit linear series with respect to $(w_i)_{v_i}$. Moreover, one checks easily that $(\mathscr L_i, V_i)_{v_i}$ satisfies the weak glueing condition. 


Now since $r(D)= 1$, the Riemann-Roch theorem for graphs \cite[Proposition 3.1]{gathmann2008riemann} shows that $\deg D\leq g+1$. If $\deg D\leq 2g-2$, Lemma \ref{weak glueing smoothing} implies that $(\mathscr L_i, V_i)_{v_i}$ lifts to a limit $\mathfrak g^1_d$ on $(X_0,\bm n)$, which is smoothable by Theorem \ref{a smoothing theorem} and Theorem \ref{theorem of dimension}. Thus $D$ can be lifted to a rank-one divisor on $X$. If $\deg D> 2g-2$ we can take a divisor $\mathcal D\in \mathrm{Div}(X)$ that specializes to $D$ and the Riemann-Roch theorems (on curves and graphs) would imply that $r(\mathcal D)=r(D)$.
\end{proof}

\begin{rem}
If we set $d'=2g-2$ in Theorem \ref{theorem of dimension}, then as described in \cite[Remark 5.4]{osserman2014dimension}, this recovers precisely the general curves considered in \cite{cartwright2014lifting}. Thus the Theorem above provides an alternate proof for \cite[Theorem 1.1]{cartwright2014lifting} for divisors of rank less than or equal to one (over a complete discrete valued field). Later we will give another proof of loc.cit. for vertex avoiding divisors.
\end{rem}

Combining with the tropical Riemann-Roch theorem, we can prove Theorem \ref{lifting divisors chain of double loops} without requiring $r(D)\leq 1$ for $X$ with small genus: 

\begin{cor}\label{lifting small genus}
In Theorem \ref{lifting divisors chain of double loops}, suppose further that $g\leq 5$, then any rational divisor class $D$ on $\Gamma$ can be lifted to a divisor class on $X_{\overline K}$ of the same rank as $D$.
\end{cor}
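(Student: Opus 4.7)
The plan is to deduce the corollary from Theorem \ref{lifting divisors chain of double loops} by using the tropical Riemann-Roch theorem to replace any divisor $D$ with $r(D) \geq 2$ by its Serre dual $K_\Gamma - D$, which will have rank at most $1$ in all but one borderline case. Recall that tropical Riemann-Roch gives $r(D) - r(K_\Gamma - D) = \deg D - g + 1$, where $K_\Gamma$ denotes the canonical divisor of $\Gamma$.

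First, if $\deg D > 2g-2$ then $D$ is non-special, and the final paragraph of the proof of Theorem \ref{lifting divisors chain of double loops} already supplies a lift of the same rank by the combined Riemann-Roch theorems. So I may assume $\deg D \leq 2g-2$; Theorem \ref{lifting divisors chain of double loops} then handles $r(D) \leq 1$, and I may further assume $r(D) \geq 2$. In this range both $D$ and $K_\Gamma - D$ are special, and Clifford's inequality for metric graphs (Baker-Norine) applied to each yields
\[
r(D) + r(K_\Gamma - D) \leq \tfrac{\deg D}{2} + \tfrac{2g-2-\deg D}{2} = g-1 \leq 4.
\]
Since $r(D) \geq 2$, this forces $r(K_\Gamma - D) \leq 2$, and the equality $r(K_\Gamma - D) = 2$ can only occur when $g = 5$ and $\deg D = 4$ (so that Clifford is attained on both $D$ and $K_\Gamma - D$).

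In the generic range $r(K_\Gamma - D) \leq 1$, I invoke Theorem \ref{lifting divisors chain of double loops} for $K_\Gamma - D$ to produce a lift $E \in \mathrm{Div}(X_{\overline K})$ with $r(E) = r(K_\Gamma - D)$. Since every component of $\widetilde X_0$ is rational, the canonical class $K_X$ of $X$ specializes to $K_\Gamma$; hence $K_X - E$ specializes to $D$, and Riemann-Roch on the smooth curve $X$ combined with tropical Riemann-Roch on $\Gamma$ gives
\[
r(K_X - E) = r(E) + g - 1 - \deg E = r(K_\Gamma - D) + \deg D - g + 1 = r(D),
\]
as desired.

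The main obstacle is the borderline configuration $g = 5$, $\deg D = 4$, $r(D) = r(K_\Gamma - D) = 2$, in which Clifford's bound is attained on both divisors. My plan is to argue that this configuration cannot occur under our hypotheses: equality in tropical Clifford forces $\Gamma$ to carry a $\mathfrak{g}^1_2$, i.e.\ a tropical hyperelliptic structure, which is incompatible at genus five with the chain-of-multi-edges shape of $\Gamma$ having at most three edges per segment and generic edge lengths as imposed by Theorem \ref{theorem of dimension}. Should this hyperelliptic-exclusion step prove problematic, a fallback is to lift such a rank-two degree-four divisor directly by extending the construction in the proof of Theorem \ref{lifting divisors chain of double loops}: build a rank-two pre-limit linear series whose underlying divisor is $D$ and verify the weak glueing condition, then conclude via Lemma \ref{weak glueing smoothing} and Theorem \ref{a smoothing theorem}.
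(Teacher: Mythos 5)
Your reduction steps are fine and essentially the paper's: the case $\deg D>2g-2$ is non-special on both sides, the case $r(K_\Gamma-D)\leq 1$ is handled by lifting $K_\Gamma-D$ via Theorem \ref{lifting divisors chain of double loops} and dualizing (using that $K_X$ specializes to $K_\Gamma$ since all components are rational, plus Riemann--Roch on both sides), and the inequality $r(D)+r(K_\Gamma-D)\leq g-1\leq 4$ pins the only remaining configuration down to $g=5$, $\deg D=4$, $r(D)=r(K_\Gamma-D)=2$.

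The genuine gap is in that borderline case. You propose to rule it out by asserting that Clifford equality would force a $\mathfrak g^1_2$ on $\Gamma$ and that no graph satisfying the hypotheses of Theorem \ref{lifting divisors chain of double loops} at genus five can be hyperelliptic; but this non-hyperellipticity claim is exactly where all the content of the case lies, and you give no argument for it (nor is it obvious: it would require, say, Chan's classification of hyperelliptic metric graphs via involutions with tree quotient, together with an analysis showing that interior two-edge blocks would need torsion $2$, excluded by condition (II), and that three-edge blocks admit no involution fixing both endpoints with tree quotient). Your fallback --- directly building a rank-two pre-limit linear series with underlying divisor $D$ and checking the weak glueing condition --- is likewise not carried out, and it is precisely the construction that the degree-$4$, rank-$2$ case makes delicate. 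The paper avoids both difficulties with a short constructive trick you did not consider: by the equality case of the tropical Clifford theorem, $D\sim 2D'$ with $\deg D'=2$ and $r(D')=1$; Theorem \ref{lifting divisors chain of double loops} lifts $D'$ to a divisor class $E'$ on $X_{\overline K}$ of rank $1$, and then $2E'$ specializes to $D$ and has rank exactly $2$ (at least $2$ because doubling a rank-one divisor on a smooth curve gives rank $\geq 2$, at most $2$ by Baker's specialization inequality). So no hyperellipticity exclusion is needed at all; as written, your proof is incomplete unless you supply the missing classification argument or the rank-two construction.
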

\begin{proof}
We may assume that $r(D)\geq 2$ and, by Riemann-Roch theorem, that $r(K_\Gamma-D)\geq 2$. We have $r(K_\Gamma)=g-1\geq r(D)+r(K_\Gamma-D)\geq 4$, hence it remains to consider the case $g=5$ and $r(D)=r(K_\Gamma-D)=2$. Now the tropical Clifford's theorem \cite[Theorem 1]{facchini2010tropical} shows that $\deg D=4$ and $D=2D'$ where $r(D')=1$. It follows from Theorem \ref{lifting divisors chain of double loops} that $D'$ is liftable to a divisor of the same rank, hence so is $D$.
\end{proof}

We next show the lifting of vertex avoiding divisors on a generic chain of loops. Since only one proposition is needed in the proof, we refer to \cite{cartwright2014lifting} for the definition of vertex avoiding divisors. The idea of proof is from Sam Payne at a workshop. 
\begin{thm}\label{lifting vertex avoiding divisors}
In Theorem \ref{lifting divisors chain of double loops} let $d'=2g-2$ instead. Suppose further that every pair of adjacent vertices is connected by at most two edges. Then every rational vertex avoiding divisor $D$ on $\Gamma$ lifts to a divisor class  on $X_{\overline K}$ of the same rank as $D$.
\end{thm}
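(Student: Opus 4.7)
The plan is to extend the strategy of Theorem \ref{lifting divisors chain of double loops} from rank $\leq 1$ to arbitrary rank vertex avoiding divisors. After a finite base-field extension, I reduce to the case where $D$ is integral, edge-reduced, of multidegree $w_0=w_{v_0}^{\mathrm{red}}$, and of rank $r$; by the tropical Riemann--Roch theorem we may assume $\deg D\leq 2g-2$. The aim is to construct a pre-limit linear series $(\mathscr L_i,V_i)_i$ on $(X_0,\bm n)$ with respect to $(w_{v_i}^{\mathrm{red}})_i$ that satisfies the weak glueing condition; then Lemma \ref{weak glueing smoothing} produces a limit linear series, Theorem \ref{theorem of dimension} together with Theorem \ref{a smoothing theorem} yield its smoothability to a $\mathfrak g^r_d$ on $X_\eta$, and the specialization of the underlying divisor recovers $D$ up to linear equivalence.

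The construction uses the rigidity afforded by the vertex avoiding hypothesis: for each $0\leq k\leq r$, there is a unique divisor $D^{(k)}$ linearly equivalent to $D$ whose combinatorial shape relative to the chain of loops (the number and location of chips on each vertex and in each loop) is determined by $k$. Writing $D^{(k)}=D+\mathrm{div}(f^{(k)})$ with $f^{(k)}\in\mathrm{Rat}(\Gamma)$, each $f^{(k)}$ has prescribed slopes at every endpoint of every edge. On each rational component $Z_{v_i}$ I then construct a section $f_i^k\in H^0(Z_{v_i},\mathscr O_{Z_{v_i}}(\mathcal D_i+D_{v_i}^{\mathrm{red}}))$ whose vanishing orders at the marked points $P^{v_i}_{e_i^a}$ and $P^{v_i}_{e_{i+1}^a}$ match the slopes of $f^{(k)}$ along the corresponding edges, with the remaining $d_i-\text{(boundary degree)}$ chips placed in sufficiently general positions on $Z_{v_i}\setminus\mathcal A_{v_i}$. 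Setting $V_i=\mathrm{span}(f_i^0,\ldots,f_i^r)$, the vanishing orders $\mathrm{ord}_{D_\bullet^{e_i,v_i}}(f_i^k)$ realize the admissible multivanishing sequences prescribed by the $D^{(k)}$, so condition (I) of Definition \ref{limit linear series on curves} is satisfied and $(\mathscr L_i,V_i)_i$ is a pre-limit linear series.

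To verify the weak glueing condition, consider a critical index $j$ for $D_\bullet^{e_i,v_{i-1}}$. Since at most two edges of $G$ connect $v_{i-1}$ and $v_i$, the graded piece $V_{v_{i-1}}(-D_j^{e_i,v_{i-1}})/V_{v_{i-1}}(-D_{j+1}^{e_i,v_{i-1}})$ has dimension at most two. When it has dimension one or when $g_j$ equals $\deg D_{j+1}^{e_i,v_{i-1}}-\deg D_j^{e_i,v_{i-1}}$ the condition is trivial, so the delicate case is $\deg D_{j+1}^{e_i,v_{i-1}}-\deg D_j^{e_i,v_{i-1}}=2$ with $g_j=1$. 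By Remark \ref{weak glueing remark} it suffices to show that the distinguished section representing this graded piece vanishes at $P^{v_{i-1}}_{e_i^1}$ if and only if the paired section on $Z_{v_i}$ vanishes at $P^{v_i}_{e_i^1}$. This is exactly what the vertex avoiding hypothesis guarantees: the slopes of $f^{(k)}$ along the two parallel edges over $e_i$ are synchronized by the Dhar-type reduction criterion, so the simultaneous-vanishing pattern on one side exactly mirrors that on the other.

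The main obstacle is the last step. For generic rank-$\geq 2$ divisors there is no reason that the slopes of $f^{(k)}$ along parallel edges should match up, and the resulting pre-limit linear series can fail the weak glueing condition; the vertex avoiding hypothesis is precisely what forces the rigid, synchronized behavior of the $D^{(k)}$ that produces weak glueing. Making the compatibility between the combinatorics of vertex avoiding divisors and the torus orbit condition in Definition \ref{weak glueing condition} precise, and checking it case by case depending on whether one or both of $b_{v_{i-1},v_i}$ and $b_{v_i,v_{i+1}}$ vanish as in the proof of Theorem \ref{lifting divisors chain of double loops}, is where the real work lies.
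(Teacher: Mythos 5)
Your architecture is the same as the paper's: for each $0\le k\le r$ take the unique representative $D_k\sim D$ with $D_k-kv_0-(r-k)v_m\ge 0$ (this is where the vertex avoiding hypothesis enters, via \cite[Proposition 2.4]{cartwright2014lifting}), write $D_k=D+\mathrm{div}(f_\Gamma^k)$, use the slopes of $f_\Gamma^k$ at the two ends of each edge to prescribe the vanishing orders of sections $f_i^k$ on the rational components, and then feed the resulting tuple into Lemma \ref{weak glueing smoothing}, Theorem \ref{theorem of dimension} and Theorem \ref{a smoothing theorem}. So far this matches the paper's proof step for step.

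The problem is that you explicitly defer the point on which the whole argument turns, namely the verification that the tuple is a pre-limit $\mathfrak g^r_d$ satisfying the weak glueing condition, and the justification you do sketch is not the right mechanism. What the paper actually establishes (and what you would need) is the following pair of facts. First, the incoming slope divisor $D_i^k=\sum_a\mathrm{slp}_{e_i^a,v_i}(f_\Gamma^k)P^{v_i}_{e_i^a}$ is exactly $D^{e_i,v_i}_{b_{v_{i-1},v_i}}-D^{e_i,v_i}_{m_i^k}$ for some critical index $m_i^k$, and the outgoing one $E_i^k$ is exactly $D^{e_{i+1},v_i}_{n_i^k}$ for some critical $n_i^k$, with $n_i^k+m_{i+1}^k=b_{v_i,v_{i+1}}$; consequently $\mathrm{ord}^0_{P^{v_i}_{e_i^a}}(f_i^k)$ equals the coefficient of $P^{v_i}_{e_i^a}$ in a critical truncation of the twisting filtration, with no extra vanishing at individual nodes inside that step. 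This simultaneously gives condition (I) of Definition \ref{limit linear series on curves} and the weak glueing condition, because in each graded piece the class of the distinguished section lies in the dense torus orbit on both sides of the edge, so the $g_j=1$ case of Remark \ref{weak glueing remark} is immediate. Your appeal to "synchronization of the slopes along the two parallel edges by a Dhar-type criterion" does not substitute for this: the two end slopes of $f_\Gamma^k$ along a single edge differ according to the chips of $D$ and $D_k$ in its interior, and it is the explicit chip-moving description of the $D_k$ for a vertex avoiding class that forces the exact critical-truncation form above. Second, the uniqueness of the $D_k$ forces the $D_i^k$ (and the $E_i^k$) to be pairwise distinct for fixed $i$, which is what makes $f_i^0,\dots,f_i^r$ linearly independent, i.e.\ $\dim V_i=r+1$; your proposal never addresses this, and without it one does not even have a pre-limit linear series of rank $r$ (one also needs effectiveness of $D_k$ to know the $f_i^k$ exist). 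With these two facts the check is short; without them the case analysis you propose on the vanishing of $b_{v_{i-1},v_i}$ and $b_{v_i,v_{i+1}}$ has nothing to run on, so as it stands the decisive step is asserted rather than proved.
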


\begin{proof}
We use the same strategy as in the proof of Theorem \ref{lifting divisors chain of double loops}. Suppose $D$ is $v_0$-reduced. Let $v_i,e_i, e_i^j,w_i$ and $d_i$ be as in loc.cit,. According to \cite[Proposition 2.4]{cartwright2014lifting}, for each $0\leq j\leq r$ there exists a unique divisor $D_j$ linearly equivalent to $D$ such that $D_j-jv_0-(r-j)v_m$ is effective, which is obtained by taking a pile of $d_0-j$ chips from $v_0$ and moving to the right (see the proof of \cite[Proposition 6.3]{jensen2014tropical}), where $d_0$ is the coefficient of $D$ at $v_0$ as in Theorem \ref{lifting divisors chain of double loops}. 

Take rational functions $f_\Gamma^j$ on $\Gamma$ such that $D_j=D+\mathrm{div}(f_\Gamma^j)$. For $0\leq i\leq m$ and $0\leq j\leq r$ let $D_i^j=\sum_k\mathrm{slp}_{e^k_i,v_i}(f_\Gamma^j)P_{e_{i}^k}^{v_i}$ and $E_i^j=-\sum_k\mathrm{slp}_{e^k_{i+1},v_i}(f_\Gamma^j)P_{e_{i+1}^k}^{v_i}$ be divisors on $Z_i$. Hence $D_i^j$ represents the chips we get when we try to move the pile of $r-j$ chips of $v_0$ from $v_{i-1}$ to $v_i$, while $E_i^j$ represents the chips we lost when move the chips from $v_i$ to $v_{i+1}$ (note that the size of the pile of chips may change as it moves).

For each $i$, by construction we have $D_i^j=D^{e_{i},v_i}_{b_{v_{i-1},v_{i}}}-D^{e_i,v_i}_{m_{i}^j}$ for some $m_{i}^j$ critical and $E_i^j=D^{e_{i+1},v_i}_{n_{i}^j}$ for some $n_i^j$ critical such that $n_i^j+m_{i+1}^j=b_{v_i,v_{i+1}}$. The uniqueness of $D_j$ shows that the $D_i^j$s are distinct for fixed $i$ and so are the $E_i^j$s. Now we take $\mathcal D_i=D^{e_{i},v_i}_{b_{v_{i-1},v_{i}}}+\mathcal D_i'$ 
to be a divisor of degree $d_i$ on $Z_{v_i}$ where $\mathcal D_i'\subset Z_{v_i}\backslash \mathcal A_{v_i}$ is effective and $f_i^j\in\mathscr O_{Z_{v_i}}(\mathcal D_i)$ such that $\mathrm{ord}_{P_{e_{i}^k}^{v_i}}(f_i^j)=-\mathrm{ord}_{P_{e_{i}^k}^{v_i}}(D_{i}^{j})$ and $\mathrm{ord}_{P_{e_{i+1}^k}^{v_i}}(f_i^j)=\mathrm{ord}_{P_{e_{i+1}^k}^{v_i}}(E_i^j)$ for all possible $k$. Note that the existence of $f_i^j$ follows from the effectiveness of $D_{j}$. It is easy to check that the tuple $(\mathscr L_i,V_i)_{v_i}=(\mathscr O_{Z_{v_i}}(\mathcal D_i),\mathrm{span}(\{f_i^j\}_j))_{v_i}$ is a pre-limit linear series on $X_0$ which satisfies the weak glueing condition, hence it lifts to a limit linear series on $(X_0,\bm n)$. As a result $D$ can be lift to a divisor of rank $r(D)$ on $X$.
\end{proof}

\bibliographystyle{amsalpha}
\bibliography{1}

\providecommand{\bysame}{\leavevmode\hbox to3em{\hrulefill}\thinspace}
\providecommand{\MR}{\relax\ifhmode\unskip\space\fi MR }
\providecommand{\MRhref}[2]{%
  \href{http://www.ams.org/mathscinet-getitem?mr=#1}{#2}
}
\providecommand{\href}[2]{#2}
\begin{thebibliography}{CDPR12}

\bibitem[AB15]{amini2015linear}
Omid Amini and Matthew Baker, \emph{Linear series on metrized complexes of
  algebraic curves}, Mathematische Annalen \textbf{362} (2015), no.~1-2,
  55--106.

\bibitem[Bak08]{baker2008specialization}
Matthew Baker, \emph{Specialization of linear systems from curves to graphs},
  Algebra \& Number Theory \textbf{2} (2008), no.~6, 613--653.

\bibitem[BJ16]{baker2016degeneration}
Matthew Baker and David Jensen, \emph{Degeneration of linear series from the
  tropical point of view and applications}, Nonarchimedean and Tropical
  Geometry, Springer, 2016, pp.~365--433.

\bibitem[CDPR12]{cools2012tropical}
Filip Cools, Jan Draisma, Sam Payne, and Elina Robeva, \emph{A tropical proof
  of the brill--noether theorem}, Advances in Mathematics \textbf{230} (2012),
  no.~2, 759--776.

\bibitem[CJP15]{cartwright2014lifting}
Dustin Cartwright, David Jensen, and Sam Payne, \emph{Lifting divisors on a
  generic chain of loops}, Canad. Math. Bull. \textbf{58} (2015), no.~2,
  250--262.

\bibitem[Con08]{conrad2008several}
Brian Conrad, \emph{Several approaches to non-archimedean geometry}, P-adic
  Geometry: Lectures from the 2007 Arizona Winter School (2008), 9--63.

\bibitem[EH86]{eisenbud1986limit}
David Eisenbud and Joe Harris, \emph{Limit linear series: basic theory},
  Inventiones mathematicae \textbf{85} (1986), no.~2, 337--371.

\bibitem[EH87]{eisenbud1987kodaira}
\bysame, \emph{The {K}odaira dimension of the moduli space of curves of
  genus≧ 23}, Inventiones mathematicae \textbf{90} (1987), no.~2, 359--387.

\bibitem[EM02]{esteves2002limit}
Eduardo Esteves and Nivaldo Medeiros, \emph{Limit canonical systems on curves
  with two components}, Inventiones mathematicae \textbf{149} (2002), no.~2,
  267--338.

\bibitem[Est98]{esteves1998linear}
Eduardo Esteves, \emph{Linear systems and ramification points on reducible
  nodal curves}, Mathematica Contemporanea \textbf{14} (1998), 21--35.

\bibitem[Fac10]{facchini2010tropical}
Laura Facchini, \emph{On tropical clifford’s theorem}, Ricerche di matematica
  \textbf{59} (2010), no.~2, 343--349.

\bibitem[GH80]{griffiths1980variety}
Phillip Griffiths and Joseph Harris, \emph{On the variety of special linear
  systems on a general algebraic curve}, Duke Math. J \textbf{47} (1980),
  no.~1, 233--272.

\bibitem[Gie82]{gieseker1982stable}
David Gieseker, \emph{Stable curves and special divisors: Petri's conjecture},
  Inventiones mathematicae \textbf{66} (1982), no.~2, 251--275.

\bibitem[GK08]{gathmann2008riemann}
Andreas Gathmann and Michael Kerber, \emph{A riemann--roch theorem in tropical
  geometry}, Mathematische Zeitschrift \textbf{259} (2008), no.~1, 217--230.

\bibitem[HM82]{harris1982kodaira}
Joe Harris and David Mumford, \emph{On the {K}odaira dimension of the moduli
  space of curves}, Inventiones mathematicae \textbf{67} (1982), no.~1, 23--86.

\bibitem[JP14]{jensen2014tropical}
David Jensen and Sam Payne, \emph{Tropical independence {I}: shapes of divisors
  and a proof of the gieseker--petri theorem}, Algebra \& Number Theory
  \textbf{8} (2014), no.~9, 2043--2066.

\bibitem[LM14]{luo2014smoothing}
Ye~Luo and Madhusudan Manjunath, \emph{Smoothing of limit linear series of rank
  one on saturated metrized complexes of algebraic curves}, arXiv preprint
  arXiv:1411.2325 (2014).

\bibitem[Luo11]{luo2011rank}
Ye~Luo, \emph{Rank-determining sets of metric graphs}, Journal of Combinatorial
  Theory, Series A \textbf{118} (2011), no.~6, 1775--1793.

\bibitem[Oss]{osserman2014limit}
Brian Osserman, \emph{Limit linear series for curves not of compact type},
  Journal f\"ur die reine und angewandte Mathematik (Crelle's journal), to
  appear.

\bibitem[Oss16]{osserman2014dimension}
\bysame, \emph{Dimension counts for limit linear series on curves not of
  compact type}, Mathematische Zeitschrift \textbf{284} (2016), no.~1-2,
  69--93.

\bibitem[Oss17]{osserman2017limit}
\bysame, \emph{Linear linear series and the {A}mini-{B}aker construction},
  arXiv preprint arXiv:1707.03845 (2017).

\end{thebibliography}
\end{document}